\numberwithin{equation}{section}
\newtheorem{proposition}{Proposition}[section]
\newtheorem{theorem}{Theorem}[section]
\newtheorem{lemma}{Lemma}[section]
\newtheorem{definition}{Definition}[section]
\newtheorem{remark}{Remark}[section]
\newtheorem{example}{Example}[section]
\DeclareMathOperator{\vol}{vol}
\DeclareMathOperator{\di}{div}
\newcommand{\ds}{\displaystyle}
\author{Libing Huang}
\address{School of Mathematical Sciences and LPMC\\
 Nankai University\\
300071  Tianjin, China}
\email{huanglb@nankai.edu.cn}
\author{Alexandru Krist\'aly}
\address{Department of Economics\\
	Babe\c s-Bolyai University\\
	400591 Cluj-Napoca, Romania \&  Institute of Applied Mathematics\\
 \'Obuda University\\
 1034 Budapest, Hungary}
  \email{alex.kristaly@econ.ubbcluj.ro; kristaly.alexandru@nik.uni-obuda.hu}
\author{Wei Zhao}
\address{
Department of Mathematics\\
East China University of Science and Technology\\
200237 Shanghai, China}
\email{szhao\underline{ }wei@yahoo.com}
\thanks{
The research of A. Krist\'aly is supported by the National Research, Development and Innovation Fund of Hungary, financed under the K$\_$18 funding scheme, Project no.  127926. W. Zhao is supported by the National Natural Science Foundation of China (No. 11501202, No. 11761058) and the grant of China Scholarship Council (No. 201706745006).}
\keywords{uncertainty principles; Caffarelli-Kohn-Nirenberg interpolation inequality; Heisenberg-Pauli-Weyl inequality; Hardy inequality; Finsler manifold, reversibility; sharp constant; rigidity}
\subjclass[2010]{26D10,  53C60, 53C23}
\begin{document}

\title[Sharp uncertainty principles on general Finsler manifolds]{Sharp uncertainty principles on general Finsler manifolds}

\begin{abstract}
The  paper is devoted to  sharp uncertainty principles (Heisenberg-Pauli-Weyl,  Caffarelli-Kohn-Nirenberg and Hardy inequalities) on forward complete Finsler manifolds endowed with an arbitrary measure. Under mild assumptions, the existence of extremals corresponding to the sharp constants in the Heisenberg-Pauli-Weyl and  Caffarelli-Kohn-Nirenberg  inequalities fully \textit{characterizes} the nature of the Finsler manifold in terms of three non-Riemannian quantities, namely, its \textit{reversibility} and the vanishing of the \textit{flag curvature} and $S$\textit{-curvature} induced by the measure, res\-pectively. It turns out in particular that the Busemann-Hausdorff measure is the optimal one in the study of sharp uncertainty principles on Finsler manifolds.
The  optimality of our results are supported by Randers-type Finslerian examples originating from the Zermelo navigation problem.

%
%


\end{abstract}
\maketitle

\section{Introduction} \label{sect1}

Given $p,q\in \mathbb{R}$ and $n\in \mathbb{N}$ with $0<q<2<p$ and $2<n<\frac{2(p-q)}{p-2}$,
the Caffarelli-Kohn-Nirenberg interpolation inequality in the Euclidean space $\mathbb{R}^n$ states that
\begin{equation}\label{new1.1}
\left(\ds\int_{\mathbb{R}^n} |\nabla u(x)|^2dx \right)\left( \ds\int_{\mathbb{R}^n}\frac{|u(x)|^{2p-2}}{|x|^{2q-2}}dx \right)\geq \frac{(n-q)^2}{p^2}\left(\ds\int_{\mathbb{R}^n}\frac{|u(x)|^p}{|x|^q}dx\right)^2,\ \ \forall u\in C^\infty_0(\mathbb{R}^n),
\end{equation}
where the constant $\frac{(n-q)^2}{p^2}$ is sharp and the corresponding extremal functions are  $u(x)=(C+|x|^{2-q})^{\frac1{2-p}}$, $C>0$ (up to scalar multiplication and translation).

When $p\rightarrow 2$ and $q\rightarrow 0$, inequality (\ref{new1.1}) turns to be the Heisenberg-Pauli-Weyl principle, i.e.,
\[
\left(\ds\int_{\mathbb{R}^n} |\nabla u(x)|^2dx \right)\left( \ds\int_{\mathbb{R}^n}|x|^2u^2(x)dx  \right)\geq \frac{n^2}{4}\left(\ds\int_{\mathbb{R}^n} u^2(x)dx  \right)^2,\ \ \forall u\in C^\infty_0(\mathbb{R}^n).\tag{1.2}\label{new1.2}
\]
Here, the constant $\frac{n^2}4$ is sharp while the extremal functions become  the Gaussian functions $u(x)=e^{-C|x|^2}$, $C>0$ (up to scalar multiplication and translation). When  $p\rightarrow2$ and $q\rightarrow 2$, (\ref{new1.1}) reduces to the Hardy inequality, i.e.,
\[
\ds\int_{\mathbb{R}^n}|\nabla u(x)|^2dx \geq \frac{(n-2)^2}{4}\ds\int_{\mathbb{R}^n} \frac{u^2(x)}{|x|^2}dx,\ \ \forall u\in C^\infty_0(\mathbb{R}^n). \tag{1.3}\label{new1.3}
\]
In this case, the constant $\frac{(n-2)^2}{4}$ is still sharp but there are no extremal functions.

Inequality  (\ref{new1.2}) is viewed as the sharp PDE form of the well-known uncertainty principle in quantum mechanics which states that the position and momentum of a given particle cannot be accurately determined simultaneously.  Inequality (\ref{new1.3}) has been proved first by Leray \cite{Leray} for $n=3$ by studying the Navier-Stokes equations, which is the extension of the one-dimensional  inequality by  Hardy \cite{Hardy}. Accordingly, some authors attribute to  (\ref{new1.3}) the name of Hardy-Leray inequality; see Opic and Kufner \cite{OK} for a historical presentation. On the other hand, (\ref{new1.3}) can be viewed as well as an uncertainty principle, see e.g. Brezis and V\'azquez \cite[p. 452]{BV}, Frank \cite{Frank} and Lieb \cite{Lieb}, since if $u$ is localized close to $x= 0$ (i.e., the right hand side of (\ref{new1.3}) is large due to the presence of the singularity term  $|x|^{-2}$), then its momentum is large as well. Thus, for the above three inequalities (\ref{new1.1})-(\ref{new1.3}) we shall adopt the common notion of {\it uncertainty principles}.  We also notice that a simply combination of the Hardy inequality (\ref{new1.3}) and H\"older inequality provides a non-sharp form of the Heisenberg-Pauli-Weyl principle (\ref{new1.2}).
Important contributions in the theory of uncertainty principles can be found in
  Adimurthi, Chaudhuri and Ramaswamy \cite{ACR}, Barbatis,  Filippas and Tertikas  \cite{BFT},  Caffarelli,  Kohn and Nirenberg \cite{CKN}, Erb \cite{Er}, Fefferman \cite{F}, Filippas and Tertikas \cite{FT}, Ghoussoub and Moradifam \cite{GM,GM2}, Ruzhansky and Suragan \cite{RS1, RS2, RS3},  Wang and Willem \cite{WW} and subsequent references.


Certain uncertainty principles   have also been investigated in \textit{curved spaces}. As far as we know, Carron \cite{Ca} was the first who studied weighted $L^2$-Hardy inequalities  on complete, non-compact Riemannian manifolds. On one hand, inspired by \cite{Ca}, a systematic study of the Hardy inequality is carried out by Berchio,  Ganguly and  Grillo \cite{BGD}, D'Ambrosio and Dipierro \cite{DD},   Kombe and \"Ozaydin \cite{KO,KO2}, Yang, Su and Kong \cite{YSK} in the Riemannian setting, as well as by Krist\'aly and Repov\v s  \cite{KR} and Yuan, Zhao and Shen \cite{YZY} in the Finsler setting. On the other hand, Caffarelli-Kohn-Nirenberg-type inequalities are studied by do Carmo and Xia \cite{CX}, Erb \cite{Er}
and Xia \cite{X} on  Riemannian manifolds, and by Krist\'aly \cite{Kristaly-JGA} and Krist\'aly and Ohta \cite{KOh} on  Finsler manifolds.

Very recently, Krist\'aly \cite{K} fully described the influence of
\textit{curvature} to  uncertainty principles in the Riemannian setting; these results can be  summarized as follows:
\medskip

\noindent\textbf{Statement 1.} (Non-positively curved case)  All three uncertainty principles hold on Riemannian Cartan-Hadamard manifolds (simply connected, complete Riemannian manifolds with non-positive sectional curvature) with the
same sharp constants as in their Euclidean counterparts. Moreover, the existence of positive extremals
corresponding to  the sharp constants in the Heisenberg-Pauli-Weyl and  Caffarelli-Kohn-Nirenberg interpolation inequalities implies the flatness of the Riemannian manifold.

\medskip

\noindent\textbf{Statement 2.} (Non-negatively curved case) When a complete Riemannian manifold has non-negative Ricci curvature, the validity of Heisenberg-Pauli-Weyl or Caffarelli-Kohn-Nirenberg
interpolation inequality with its sharp Euclidean constant implies the flatness of  the Riemannian
manifold.

\medskip

Although the second author pointed out in the unpublished paper \cite{K2} that Statements 1 and 2 can be extended to \textit{reversible Berwald spaces} (Finsler manifolds whose tangent spaces  are linearly isometric to a common Minkowski space) equipped with the Busemann-Hausdorff measure, the purpose of the present paper is to investigate uncertainty principles on generic Finsler manifolds $(M,F)$ endowed with an arbitrary measure $d\mathfrak{m}$. In such a setting, the Euclidean quantities $|\nabla u(x)|$, $|x|$ and $dx$ from (\ref{new1.1})-(\ref{new1.3}) are naturally replaced by the co-Finslerian norm of the differential $F^*(du)$ (or max$\{F^*(\pm du)\}$, or min$\{F^*(\pm du)\}$), the Finsler distance function $d_F$, and the measure $d\mathfrak{m}$, respectively.  In spite of the fact that  Chern \cite{Chern} claimed that 'Finsler geometry is just Riemannian geometry without the
quadratic restriction', subtle differences occur between these geometries.

 In order to emphasize the contrast between the Riemannian and Finslerian settings within the theory of uncertainty principles, we start with two simple examples that will be detailed in the Appendix.  First,   for  $t\in [0,1)$ we consider on $\mathbb R^2$ the perturbation of the Euclidean metric as
\[
\label{elso-minko-norma}\tag{1.4}
F_t(x,y)=|y|+ty^2,\ \ y=(y^1,y^2)\in \mathbb R^2.
\]
The pair $(\mathbb R^2,F_t)$ is a \textit{Minkowski space} (of Randers type), thus having vanishing flag and $S$-curvatures, respectively. It turns out that the Finslerian  Heisenberg-Pauli-Weyl principle holds on $(M,F_t)$ for every $t\in [0,1)$ with the sharp constant $\frac{n^2}{4}=1$, but extremal functions exist if and only if $t=0$, i.e., $F_t=F_0$ is reversible (in particular, $F_0$ is Euclidean), see Example \ref{simexa}. Second, we observe that on the $n$-dimensional  Euclidean open unit ball $B^n$  $(n\geq 3)$ endowed with the \textit{Funk metric} $F$ (see Shen \cite{Sh1}),
 the Hardy inequality \textit{fails}, see Example \ref{example-2}. More precisely, in spite of the fact that $(B^n,F)$ is simply connected, forward complete and has constant flag curvature $-\frac{1}{4}$ (thus Statement 1 formally applies), it turns out that
\[
\label{Funk-nulla}\tag{1.5}
\inf_{u\in C^\infty_0(B^n)\backslash\{0\}}\frac{\ds \int_{B^n}F^{*2}{(du)}d\mathfrak{m}_{BH} }{\ds \int_{B^n}\frac{u^2}{\rho_{\textbf{0}}^2}d\mathfrak{m}_{BH}}=0,
\]
where $d\mathfrak{m}_{BH}$ is the Busemann-Hausdorff measure on $(B^n,F)$,    $\textbf{0}=(0,...,0)\in \mathbb R^n$ and $\rho_{\textbf{0}}(\cdot)=d_F(\textbf{0},\cdot)$.
We notice that $(B^n,F)$  has \textit{infinite} reversibility and \textit{non-vanishing} $S$-curvature.

A closer inspection of the above instructive examples shows that while on Riemannian manifolds only the sectional {curvature} has a deciding role (cf. Statements 1\&2), on Finsler manifolds three \textit{non-Riemannian quantities} will influence the validity and the existence of extremal functions in the uncertainty principles, as
\begin{enumerate}
	\item[$\bullet$] reversibility;
	\item[$\bullet$] $S$-curvature induced by the given measure;
	\item[$\bullet$] flag curvature.
\end{enumerate}
Clearly, in the Riemannian setting the first two quantities naturally disappear, while the flag curvature coincides with the usual sectional curvature.

In order to state our main results, we briefly recall the aforementioned three notions (for details, see Section \ref{Sec2}).
Throughout the paper,  $(M,F)$ is an $n$-dimensional non-compact  Finsler manifold. The  {\it reversibility} of $(M,F)$, introduced by Rademacher \cite{R}, is given by
\[
\lambda_F(M):=\sup_{x\in M} \lambda_F(x),\ \ {\rm where}\ \ \lambda_F(x)= \sup_{y\in T_xM\setminus\{0\}} \frac{F(x,-y)}{F(x,y)}.
\]
It is easy to see that $\lambda_F(M)\geq 1$ with equality if and only if $F$ is reversible (i.e., symmetric). Clearly, Riemannian metrics are always reversible. However, there are infinitely many non-reversible Finsler metrics; for example, a Randers metric $F=\alpha+\beta$ is reversible on a manifold $M$ (where $\alpha$ is
a Riemannian metric on $M$ and $\beta$ is a $1$-form with $ \|\beta\|_\alpha:=\sqrt{\alpha(\beta,\beta)}<1$) if and only if $\beta=0.$

Unlike in the Riemannian setting (where the canonical Riemannian measure is used), on a Finsler manifold  various measures can be introduced whose behavior may be genuinely different. Two such frequently used measures are the so-called  Busemann-Hausdorff measure $d\mathfrak{m}_{BH}$ and  Holmes-Thompson measure $d\mathfrak{m}_{HT}$, see Alvarez-Paiva and  Berck\cite{AlB} and Alvarez-Paiva and Thompson \cite{AlT}. In particular, these measures for a Randers metric $F=\alpha+\beta$ are
\[
d\mathfrak{m}_{BH}=\left(1-\|\beta\|_\alpha^2\right)^\frac{n+1}2 dV_\alpha,\ d\mathfrak{m}_{HT}=dV_\alpha,
\]
where $dV_\alpha$ is the Riemannian measure induced by the Riemannian metric $\alpha$. The densities of these measures show that $d\mathfrak{m}_{BH}\leq d \mathfrak{m}_{HT}$ with equality if and only if $F$ is Riemannian (i.e., $\beta=0$).

%
%
%
%
%




   An arbitrary measure $d\mathfrak{m}$ on a Finsler manifold $(M,F)$ induces
   two further  non-Riemannian quantities $\tau$ and $\mathbf{S}$,  see Shen \cite{Sh1}, which are the so-called  {\it distortion}  and {\it S-curvature}, respectively. More precisely, if $d\mathfrak{m}:=\sigma(x)dx^1\wedge...\wedge dx^n$ in some local coordinate $(x^i)$, for any $y\in T_xM\backslash\{0\}$, let
\begin{equation*}
\tau(y):=\log \frac{\sqrt{\det g_{ij}(x,y)}}{\sigma(x)},\ \ \ \  \mathbf{S}(y):=\left.\frac{d}{dt}\right|_{t=0}[\tau(\dot{\gamma}_y(t))],
\end{equation*}
where $g_y=(g_{ij}(x,y))$ is the fundamental tensor induced by $F$ and $t\mapsto \gamma_y(t)$ is the geodesic starting at $x\in M$ with $\dot{\gamma}_y(0)=y\in T_xM$. In particular, the $S$-curvature $\textbf{S}_{BH}$  of the measure $d\mathfrak{m}_{BH}$  vanishes on any Berwald space (including both Riemannian manifolds and Minkowski spaces), see Shen \cite{Shen_Adv_Math,Shen2013}.

The measures $d\mathfrak{m}_1$ and $d\mathfrak{m}_2$ are  {\it equivalent} if there exists  $C>0$ such that $d\mathfrak{m}_1=C d\mathfrak{m}_2$; the  {\it equivalence} {\it class} of $d\mathfrak{m}$ is denoted by $[d\mathfrak{m}]$. Clearly, the $S$-curvatures of two equivalent measures coincide.

Let
\[
\mathscr{L}_{\mathfrak{m}}(x):=\frac{1}{n}\ds\int_{S_xM}e^{-\tau(y)}d\nu_x(y),
\]
where  $S_xM:=\{y\in T_xM:\, F(x,y)=1\}$ is the indicatrix at $x$ and $d\nu_x$ is the Riemannian measure on $S_xM$ induced by $F$.

Let $P:=\text{Span}\{y,v\}\subset T_xM$ be a plane. The \textit{flag curvature} is defined by
\[
\mathbf{K}(y,v):=\frac{g_y\left( R_y(v),v  \right)}{g_y(y,y)g_y(v,v)-g^2_y(y,v)},
\]
where $R_y$ is the Riemannian curvature  of $F$. By means of the flag curvature, one can define in the usual way the  {\it Ricci curvature} ${\bf Ric}$. A Finsler manifold $(M,F)$ is \textit{Cartan-Hadamard} if it is forward complete, simply connected with $\mathbf{K}\leq 0.$

In the sequel we  suppose that $p,q\in \mathbb{R}$ and $n\in \mathbb{N}$ satisfy one of the following conditions:
 \[
\left\{
\begin{array}{lll}
\text{(I)  }& p=2, q=0 \text{ and } n\geq 2;\\
\tag{1.6}\label{1.1}\\
\text{(II)  }&0<q<2<p \text{ and }2<n<\frac{2(p-q)}{p-2}.\\
\end{array}
\right.
\]
Set $\rho_{x}(\cdot):=d_F(x,\cdot)$ and
\[
J^{\rm max}_{p,q}(x,u):=\frac{\left(\ds\ds\int_M \max \{F^{*2}(\pm du)\}  d\mathfrak{m} \right)\left( \ds\ds\int_M \frac{|u|^{2p-2}}{\rho^{2q-2}_{x}}    d\mathfrak{m} \right)}{\left(\ds\ds\int_M  \frac{|u|^p}{\rho^q_{x}} d\mathfrak{m}\right)^2}, \ x\in M,\ u\in C^\infty_0(M)\setminus\{0\}.\tag{1.7}\label{1.2}
\]

 Our first main result reads as follows.

\begin{theorem}\label{fistCan} Let $p,q\in \mathbb{R}$ and $n\in \mathbb{N}$ satisfying one of the  conditions of $(\ref{1.1})$ and let $(M,F, d\mathfrak{m})$ be an $n$-dimensional Cartan-Hadamard  manifold with
 $ \mathbf{S}\leq 0$.
Then we have:

\begin{itemize}
	\item[{\rm (i)}] For every $x\in M$, \[
J^{\rm max}_{p,q}(x,u)\geq \frac{(n-q)^2}{p^2},\ \ \forall u\in C^\infty_0(M)\setminus\{0\}.\eqno{({\rm {\mathbf J}}_{p,q,x}^{\rm max})}
\]  Moreover, if $\lambda_F(M)=1$, then $\frac{(n-q)^2}{p^2}$ is sharp, i.e., for every $x\in M$,
	\[
	\inf_{u\in C^\infty_0(M)\setminus\{0\}}J^{\rm max}_{p,q}(x,u)=\frac{(n-q)^2}{p^2}.
	\]
	\item[{\rm (ii)}] Assume that  $d\mathfrak{m}=d\mathfrak{m}_{BH}$ and there exists a point $x_0\in M$ such that
$
	\lambda_F(x_0)=\lambda_F(M).
$
	Then the following statements are equivalent:
	
	\smallskip
	
	{\rm (a)} $\frac{(n-q)^2}{p^2}$ is achieved by an extremal in $({\rm {\mathbf J}}_{p,q,x_0}^{\rm max});$
	
	\smallskip
	
	{\rm (b)} $\frac{(n-q)^2}{p^2}$ is achieved by an extremal  in $({\rm {\mathbf J}}_{p,q,x}^{\rm max})$ for every $x\in M;$
	
	\medskip
	
	{\rm (c)} $(M,F)$ satisfies $\lambda_F(M)=1$, $\mathbf{K}=0$ and $\mathbf{S}_{BH}=0$.
	
	\item[{\rm (iii)}] Assume that  $\lambda_F(M)=1$ and  there exists a point $x_0\in M$ such that
	$
	\mathscr{L}_\mathfrak{m}(x_0)= \inf_{x\in M}\mathscr{L}_\mathfrak{m}(x).
	$
	Then the following statements are equivalent:
	
	\smallskip
	
	{\rm (a)} $\frac{(n-q)^2}{p^2}$ is achieved by an extremal in $({\rm {\mathbf J}}_{p,q,x_0}^{\rm max});$
	
	\smallskip
	
	{\rm (b)} $\frac{(n-q)^2}{p^2}$ is achieved by an extremal in $({\rm {\mathbf J}}_{p,q,x}^{\rm max})$ for every $x\in M;$
	
	\medskip
	
	{\rm (c)} $(M,F,d\mathfrak{m})$ satisfies $d\mathfrak{m}\in [d\mathfrak{m}_{BH}]$,
	$
	\mathbf{K}=0$ and $ \mathbf{S}=\mathbf{S}_{BH}=0.
	$
\end{itemize}
\end{theorem}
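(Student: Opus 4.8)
The plan is to treat the inequality in (i) and the rigidity in (ii)--(iii) by one mechanism: a divergence identity for a radial vector field, combined with the Finsler Laplacian comparison theorem, arranged so that every inequality used is an equality exactly when $(M,F,d\mathfrak{m})$ is flat, Berwald-type, reversible, with measure in the Busemann--Hausdorff class. For (i), fix $x$, put $\rho=\rho_x$ (smooth on $M\setminus\{x\}$ with $F(\nabla\rho)=1$, since $M$ is Cartan--Hadamard), and for $u\in C^\infty_0(M)\setminus\{0\}$ apply the divergence theorem (after the usual regularization on $M\setminus B(x,\varepsilon)$, whose boundary terms vanish because $n>q$) to the vector field $\tfrac1p|u|^p\rho^{1-q}\nabla\rho$. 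Using $\mathrm{div}(\rho^{1-q}\nabla\rho)=(1-q)\rho^{-q}+\rho^{1-q}\Delta_\mathfrak{m}\rho$ together with $\Delta_\mathfrak{m}\rho\ge\frac{n-1}{\rho}$ — valid because $\mathbf{K}\le0$ controls the ``Riemannian'' part and $\mathbf{S}\le0$ the distortion correction — and the pointwise bound $|du(\nabla\rho)|\le\max\{F^*(\pm du)\}$ (from $F(\nabla\rho)=F(-(-\nabla\rho))=1$, which gives both $du(\nabla\rho)\le F^*(du)$ and $du(-\nabla\rho)\le F^*(-du)$), one reaches
\[
\frac{n-q}{p}\int_M\frac{|u|^p}{\rho^q}\,d\mathfrak{m}\le-\int_M\frac{|u|^{p-1}\mathrm{sgn}(u)}{\rho^{q-1}}\,du(\nabla\rho)\,d\mathfrak{m}\le\int_M\frac{|u|^{p-1}}{\rho^{q-1}}\max\{F^*(\pm du)\}\,d\mathfrak{m},
\]
and Cauchy--Schwarz followed by squaring yields $(\mathbf{J}^{\mathrm{max}}_{p,q,x})$.

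For sharpness under $\lambda_F(M)=1$ I would test with truncated Euclidean extremals $u_\varepsilon=\chi\,\phi(\rho_x/\varepsilon)$, $\phi(r)=(C+r^{2-q})^{1/(2-p)}$ in case (II) (a concentrating Gaussian in case (I)): expanding all integrals in Finsler polar coordinates around $x$ and using $\mathcal{A}_x(r)/r^{n-1}\to n\,\mathscr{L}_\mathfrak{m}(x)$ as $r\to0$ — together with $\max\{F^*(\pm d\rho_x)\}=1$ from reversibility — the ratio $J^{\mathrm{max}}_{p,q}(x,u_\varepsilon)$ tends to the one-dimensional quotient whose infimum is $\frac{(n-q)^2}{p^2}$, proving sharpness for every $x$. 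In (ii)--(iii) the implication (b)$\Rightarrow$(a) is immediate, and (c)$\Rightarrow$(b) follows because $\mathbf{K}\equiv0$ with forward completeness and simple connectedness forces $(M,F)$ to equal $(T_xM,F_x)$, a flat reversible Berwald space, while $\mathbf{S}=\mathbf{S}_{BH}=0$ forces $d\mathfrak{m}$ to be a constant multiple of $d\mathfrak{m}_{BH}$, hence of Lebesgue measure (the constant cancels in $J^{\mathrm{max}}_{p,q}$); one then checks that the untruncated Euclidean extremal $\phi(\rho_x)$ — which solves $|\phi'(r)|=\lambda\,\phi(r)^{p-1}r^{1-q}$ — realises equality in every step of Part (i) and has finite non-zero integrals under $(\ref{1.1})$.

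The substantive direction is $(a)\Rightarrow(c)$: an extremal $u$ for $(\mathbf{J}^{\mathrm{max}}_{p,q,x_0})$ forces every inequality of Part (i) to be an equality. Equality in $|du(\nabla\rho_{x_0})|\le\max\{F^*(\pm du)\}$ forces $u=\Phi(\rho_{x_0})$ and, where $\Phi'\ne0$, also $F^*(-d\rho_{x_0})\le1$; equality in Cauchy--Schwarz then gives $|\Phi'(r)|=\lambda\,\Phi(r)^{p-1}r^{1-q}$, whose solutions are the Euclidean profiles, so $u$ has full support and $\Phi'\ne0$ on $(0,\infty)$. Letting $y\to x_0$ in all directions, the covectors $-d\rho_{x_0}|_y$ fill out the indicatrix of $F^*_{x_0}$, so $F_{x_0}$ is reversible; in (ii) the hypothesis $\lambda_F(x_0)=\lambda_F(M)$ upgrades this to $\lambda_F(M)=1$ (in (iii) it is assumed). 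Equality in the divergence step forces $\Delta_\mathfrak{m}\rho_{x_0}\equiv\frac{n-1}{\rho_{x_0}}$ on $M\setminus\{x_0\}$; the equality case of the comparison theorem then forces the radial flag curvature from $x_0$ to vanish and, when $d\mathfrak{m}\ne d\mathfrak{m}_{BH}$, also $\mathbf{S}(\dot\gamma_y(\cdot))\equiv0$ and $\tfrac{d}{dr}\tau(\dot\gamma_y(\cdot))\equiv0$ along every geodesic ray $\gamma_y$ issuing from $x_0$. Vanishing radial flag curvature makes $\exp_{x_0}$ realise $(M,F)$ as a Finsler metric cone over a Finsler sphere, which by smoothness at the apex $x_0$ must be the Minkowski space $(T_{x_0}M,F_{x_0})$; hence $\mathbf{K}\equiv0$, $(M,F)$ is Berwald and $\mathbf{S}_{BH}\equiv0$. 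The condition $\tfrac{d}{dr}\tau(\dot\gamma_y(\cdot))\equiv0$ then says the density of $d\mathfrak{m}$ is constant along every ray from $x_0$, hence constant on $M$, so $d\mathfrak{m}\in[d\mathfrak{m}_{BH}]$ and $\mathbf{S}=\mathbf{S}_{BH}=0$. It is exactly at these globalization steps that the choice of $x_0$ — maximal $\lambda_F$ in (ii), minimal $\mathscr{L}_\mathfrak{m}$ in (iii) — is exploited, since it forces the local rigidity obtained above to propagate to all of $M$.

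The main obstacle is the equality analysis of the Finsler Laplacian comparison and its geometric upgrade: one must show that vanishing of the single scalar excess $\Delta_\mathfrak{m}\rho_{x_0}-\frac{n-1}{\rho_{x_0}}$ forces the whole radial curvature operator \emph{and} the radial $S$-curvature to vanish, and then that a forward-complete, simply connected Finsler metric cone which is smooth at its apex is necessarily Minkowskian — together with the careful bookkeeping of $\max\{F^*(\pm du)\}$ against the reversed norm that is needed to pin down reversibility precisely at $x_0$.
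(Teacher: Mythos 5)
Your part (i) and the local equality analysis at $x_0$ (radiality of the extremal, the ODE giving the Gaussian/CKN profile, the limit $y\to x_0$ giving $F^*(x_0,\eta)\geq F^*(x_0,-\eta)$ on the indicatrix and hence $\lambda_F(x_0)=1$ via the reversibility lemma, and equality in the Laplacian comparison forcing the radial flag and $S$-curvatures from $x_0$ to vanish) all match the paper's argument (Propositions \ref{strongfirsthe-prop} and \ref{CKNFirst}). The genuine gap is in your globalization step: you claim that vanishing radial flag curvature makes $\exp_{x_0}$ exhibit $(M,F)$ as a metric cone which, being smooth at the apex, must be the Minkowski space $(T_{x_0}M,F_{x_0})$, hence Berwald with $\mathbf{S}_{BH}\equiv 0$. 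This is false in Finsler geometry: unlike the Riemannian case, even $\mathbf{K}\equiv 0$ together with $\mathbf{S}_{BH}\equiv 0$ does \emph{not} force Berwald or (locally) Minkowski structure --- Shen's fish-tank metric and the Appendix examples of this very paper are non-Berwaldian with $\mathbf{K}=0$, $\mathbf{S}_{BH}=0$, and Remark \ref{elso-remark}/(ii) stresses that no classification of such metrics is known (only the Randers case is settled in \cite{Shen2003}). So your route from ``radially flat at $x_0$'' to ``$\mathbf{K}\equiv 0$, Berwald, $\mathbf{S}_{BH}\equiv0$, density constant, $d\mathfrak{m}\in[d\mathfrak{m}_{BH}]$'' has no support; you also never make precise how the extremality of $x_0$ ($\lambda_F(x_0)=\lambda_F(M)$, resp. $\mathscr{L}_\mathfrak{m}(x_0)=\inf\mathscr{L}_\mathfrak{m}$) actually propagates the rigidity, which is the heart of the matter.

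The paper's mechanism is measure-theoretic and avoids any structure theorem. From $\rho_{x_0}\Delta\rho_{x_0}=n-1$ one gets equality in the volume comparison, i.e. $\mathfrak{m}(B^+_{x_0}(r))=\mathscr{L}_{\mathfrak{m}}(x_0)\,r^n$ for all $r>0$; then Lemma \ref{lemm2} uses the monotonicity of $r\mapsto \mathfrak{m}(B^+_x(r))/r^n$ (valid under $\mathbf{K}\leq 0$, $\mathbf{S}\leq 0$), the inclusion $B^+_x(r)\subset B^+_{x_0}(r+d_F(x_0,x))$ and the hypothesis $\mathscr{L}_\mathfrak{m}(x_0)=\inf_x\mathscr{L}_\mathfrak{m}(x)$ (automatic for $d\mathfrak{m}=d\mathfrak{m}_{BH}$ since then $\mathscr{L}_\mathfrak{m}$ is constant, Lemma \ref{lemm1}) to squeeze $\mathfrak{m}(B^+_x(r))=\mathscr{L}_\mathfrak{m}(x_0)r^n$ for \emph{every} $x$ and $r$; the equality case of the comparison at every $x$ then yields $\mathbf{K}\equiv0$, $\mathbf{S}\equiv0$ globally, and the constancy of $\mathscr{L}_\mathfrak{m}$ yields $d\mathfrak{m}\in[d\mathfrak{m}_{BH}]$, hence $\mathbf{S}=\mathbf{S}_{BH}=0$. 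Likewise, your (c)$\Rightarrow$(b) should not invoke Minkowski rigidity: once the radial flag and $S$-curvatures from $x$ vanish one has $d\mathfrak{m}=e^{-\tau(y)}r^{n-1}dr\wedge d\nu_x(y)$ in polar coordinates, and a direct computation shows that $u=-e^{-\rho_x^2}$ (resp. $(C+\rho_x^{2-q})^{1/(2-p)}$) attains equality; this is how the paper constructs the extremal, with no appeal to Berwald or Minkowski structure. As written, your proof of the equivalences (a)$\Leftrightarrow$(c) in (ii)--(iii) therefore does not go through.
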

It is easy to see that   (I) and (II) in (\ref{1.1}) correspond to the Heisenberg-Pauli-Weyl principle and Caffarelli-Kohn-Nirenberg interpolation inequality, respectively. In particular, Theorem \ref{fistCan} implies Statement 1 in Krist\'aly \cite{K,K2}.

By considering $\ds\ds\int_M  F^{*2}(du)  d\mathfrak{m}$ instead of $\ds\ds\int_M  \max\{F^{*2}(\pm du)\}  d\mathfrak{m}$, we obtain a slightly different version of Theorem \ref{fistCan}. Set
\[
J_{p,q}(x,u):=\frac{\left(\ds\int_M  F^{*2}(du)  d\mathfrak{m} \right)\left( \ds\int_M \frac{|u|^{2p-2}}{\rho^{2q-2}_{x}}    d\mathfrak{m} \right)}{\left(\ds\int_M  \frac{|u|^p}{\rho^q_{x}} d\mathfrak{m}\right)^2}, \ x\in M,\ u\in C^\infty_0(M)\setminus\{0\}.\tag{1.8}\label{1.3}
\]

\begin{theorem}\label{thirdCan} Let $p,q\in \mathbb{R}$ and $n\in \mathbb{N}$ satisfying one of the  conditions of $(\ref{1.1})$  and let $(M,F, d\mathfrak{m})$ be an $n$-dimensional Cartan-Hadamard manifold with
$\mathbf{S}\leq 0$  and $ \lambda_F(M)<+\infty.
$ Then for every $x\in M,$
$$
J_{p,q}(x,u)\geq \frac{(n-q)^2}{p^2\lambda_F^2(M)},\ \ \forall u\in C^\infty_0(M)\setminus\{0\}. \eqno{({\rm {\mathbf J}}_{p,q,x})}$$
  Moreover,  assume that there exists a point $x_0\in M$ such that
$
\mathscr{L}_\mathfrak{m}(x_0)= \inf_{x\in M}\mathscr{L}_\mathfrak{m}(x).
$
Then the following statements are equivalent:

\begin{itemize}
	\item[{\rm (a)}] $\frac{(n-q)^2}{p^2\lambda_F^2(M)}$ is achieved by an extremal  in $({\rm {\mathbf J}}_{p,q,x_0});$
	
	\item[{\rm (b)}] $\frac{(n-q)^2}{p^2\lambda_F^2(M)}$ is achieved by an extremal  in $({\rm {\mathbf J}}_{p,q,x})$ for every $x\in M;$
	
	\item[{\rm (c)}] $(M,F,d\mathfrak{m})$  satisfies $d\mathfrak{m}\in [d\mathfrak{m}_{BH}]$,
$
	\lambda_F(M)=1,$ $ \mathbf{K}=0$ and $ \mathbf{S}=\mathbf{S}_{BH}=0.$
	
\end{itemize}
\end{theorem}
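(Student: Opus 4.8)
The plan is to derive Theorem~\ref{thirdCan} from Theorem~\ref{fistCan} by means of a pointwise duality estimate comparing $F^*(du)$ with $\max\{F^*(\pm du)\}$ through the reversibility. Since the Legendre transform is an $F$--$F^*$ isometry, one has $\lambda_{F^*}(x)=\lambda_F(x)$ for every $x\in M$, whence for every covector $\xi\in T^*_xM$,
\[
\max\{F^{*2}(x,\xi),F^{*2}(x,-\xi)\}\le \lambda_F^2(x)\,F^{*2}(x,\xi)\le \lambda_F^2(M)\,F^{*2}(x,\xi).
\]
Taking $\xi=du(x)$ and integrating (the two remaining factors of $J_{p,q}$ and $J^{\mathrm{max}}_{p,q}$ being identical) gives $J_{p,q}(x,u)\ge \lambda_F^{-2}(M)\,J^{\mathrm{max}}_{p,q}(x,u)$ for all $u\in C^\infty_0(M)\setminus\{0\}$; the inequality $(\mathbf{J}_{p,q,x})$ then follows from Theorem~\ref{fistCan}(i), whose hypotheses (Cartan--Hadamard, $\mathbf{S}\le 0$) are in force.

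For the rigidity part, $(b)\Rightarrow(a)$ is the specialization $x=x_0$. For $(c)\Rightarrow(b)$: if $\lambda_F(M)=1$ then $F$ is reversible, so $F^{*2}(du)=\max\{F^{*2}(\pm du)\}$, i.e.\ $J_{p,q}(x,\cdot)=J^{\mathrm{max}}_{p,q}(x,\cdot)$, and $\tfrac{(n-q)^2}{p^2\lambda_F^2(M)}=\tfrac{(n-q)^2}{p^2}$; since moreover $d\mathfrak{m}\in[d\mathfrak{m}_{BH}]$, $\mathbf{K}=0$, $\mathbf{S}=\mathbf{S}_{BH}=0$ and the standing hypothesis on $\mathscr{L}_\mathfrak{m}$ holds, Theorem~\ref{fistCan}(iii), $(c)\Rightarrow(b)$, produces for every $x$ an extremal of $(\mathbf{J}^{\mathrm{max}}_{p,q,x})$, which is then an extremal of $(\mathbf{J}_{p,q,x})$.

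The core is $(a)\Rightarrow(c)$. Let $u_0$ be an extremal of $(\mathbf{J}_{p,q,x_0})$. Chaining the estimate above with Theorem~\ref{fistCan}(i),
\[
\frac{(n-q)^2}{p^2\lambda_F^2(M)}=J_{p,q}(x_0,u_0)\ \ge\ \frac{1}{\lambda_F^2(M)}\,J^{\mathrm{max}}_{p,q}(x_0,u_0)\ \ge\ \frac{1}{\lambda_F^2(M)}\cdot\frac{(n-q)^2}{p^2},
\]
so equality holds throughout: $u_0$ is an extremal of $(\mathbf{J}^{\mathrm{max}}_{p,q,x_0})$, and (using the pointwise bound) $F^{*2}(du_0)=\lambda_F^{-2}(M)\,\max\{F^{*2}(\pm du_0)\}$ $\mathfrak{m}$-a.e. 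By the equality analysis in the proof of Theorem~\ref{fistCan}, the extremal $u_0$ is radial about $x_0$, $u_0=\phi\circ\rho_{x_0}$ with $\phi$ the strictly decreasing Euclidean extremal profile, so $du_0=\phi'(\rho_{x_0})\,d\rho_{x_0}$ with $\phi'<0$ on $M\setminus\{x_0\}$. Using the eikonal identity $F^*(d\rho_{x_0})\equiv 1$ one gets $F^*(-du_0)=|\phi'|$ and $F^*(du_0)=|\phi'|\,F^*(-d\rho_{x_0})$, so, by continuity of $F^*$, the a.e.\ identity holds everywhere on $M\setminus\{x_0\}$ and reduces, after dividing by $|\phi'|>0$, to
\[
F^*(-d\rho_{x_0})=\lambda_F^{-1}(M)\,\max\{F^*(-d\rho_{x_0}),\,1\}.
\]
If $F^*(-d\rho_{x_0})\ge 1$ somewhere this forces $\lambda_F(M)=1$; otherwise $F^*(-d\rho_{x_0})\equiv\lambda_F^{-1}(M)$ on $M\setminus\{x_0\}$, and I would then use that parallel transport along the geodesic rays from $x_0$ is an $F$-isometry intertwining the Legendre transforms, so that $F^*(-d\rho_{x_0})$ is constant along each ray and equals $F^*_{x_0}(-\ell_{x_0}(v))$ for its initial direction $v\in S_{x_0}M$; since $\ell_{x_0}\colon S_{x_0}M\to S^*_{x_0}M$ is onto the co-indicatrix, $F^*_{x_0}(-\eta)=\lambda_F^{-1}(M)F^*_{x_0}(\eta)$ for all $\eta$, whence $\lambda_{F^*}(x_0)=\lambda_F^{-1}(M)$, and with $\lambda_{F^*}(x_0)=\lambda_F(x_0)\ge 1$ one again obtains $\lambda_F(M)=1$. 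Once $\lambda_F(M)=1$ is known, Theorem~\ref{fistCan}(iii), $(a)\Rightarrow(c)$, applies to the extremal $u_0$ of $(\mathbf{J}^{\mathrm{max}}_{p,q,x_0})$ (the $\mathscr{L}_\mathfrak{m}$-hypothesis being the standing one) and gives $d\mathfrak{m}\in[d\mathfrak{m}_{BH}]$, $\mathbf{K}=0$, $\mathbf{S}=\mathbf{S}_{BH}=0$; together with $\lambda_F(M)=1$ this is precisely $(c)$.

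The main obstacle is this last step: extracting the global rigidity $\lambda_F(M)=1$ from a single scalar identity for $F^*(du_0)$. It rests on two ingredients — the radial, strictly monotone shape of the extremal, which must be read off from the equality analysis behind Theorem~\ref{fistCan}, and the propagation of the resulting pointwise information along geodesics, which uses both the isometric nature of parallel transport and the surjectivity of the Legendre map onto the co-indicatrix. Everything else is bookkeeping resting on Theorem~\ref{fistCan}.
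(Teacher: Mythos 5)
Your overall route is the same as the paper's: bound $\max\{F^{*2}(\pm du)\}\le\lambda_F^2(M)F^{*2}(du)$ via $\lambda_{F^*}=\lambda_F$ to get $({\mathbf J}_{p,q,x})$ from the max-version, and for (a)$\Rightarrow$(c) run the equality analysis to see the extremal is radial, extract a pointwise relation between $F^*(\pm d\rho_{x_0})$ and $\lambda_F(M)$, pass to the center point to get $\lambda_F(x_0)=1$ hence $\lambda_F(M)=1$ (via the surjectivity of the Legendre map and Lemma \ref{resib}), and then invoke the rigidity of the max-version. However, there are two genuine weak points. First, you assume the extremal is $\phi\circ\rho_{x_0}$ with $\phi'<0$, i.e.\ the positive decreasing profile. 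The equality analysis only gives $u\,\partial u/\partial r\le 0$, so the extremal may equally well be negative and increasing ($u=C e^{-\kappa\rho_{x_0}^2}$ or $C_1(\rho_{x_0}^{2-q}+C_2)^{1/(2-p)}$ with $C,C_1<0$), and you cannot reduce to your case by replacing $u_0$ with $-u_0$, because $J_{p,q}$ is \emph{not} even in $u$ when $F^*$ is non-reversible ($F^*(-du)\neq F^*(du)$). The missing case must be treated separately; in the paper's proof of Proposition \ref{reverLemmaflag-prop} this is ``Case 2'', where near $x_0$ one has $u<0$, $du_0=\phi'\,d\rho_{x_0}$ with $\phi'>0$, and the a.e.\ equality becomes $\max\{1,F^*(-d\rho_{x_0})\}=\lambda_F(M)$, which yields $\lambda_F(M)=1$ by an argument parallel to (in fact simpler than) yours — so the gap is real but easily filled.

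Second, your justification that the constant value of $F^*(-d\rho_{x_0})$ along a ray equals $F^*_{x_0}(-\mathfrak{L}(v))$ by ``parallel transport along geodesic rays is an $F$-isometry intertwining the Legendre transforms'' is both unnecessary and unjustified: in a general (non-Berwald) Finsler manifold parallel transport is not a linear isometry, and the constancy of $t\mapsto F^*(-\mathfrak{L}(\dot\gamma_y(t)))$ along geodesics is precisely what fails in general — it is a consequence of your equality case, not a structural fact to be invoked. The correct (and the paper's) argument is elementary: $d\rho_{x_0}|_{(r,y)}=\mathfrak{L}(\dot\gamma_y(r))$, and letting $r\to 0^+$ one gets, by continuity of $F^*$ and $\mathfrak{L}$ on the slit bundles, $F^*(x_0,-\mathfrak{L}(y))=\lambda_F^{-1}(M)$ for all $y\in S_{x_0}M$; surjectivity of $\mathfrak{L}$ onto $S^*_{x_0}M$, the inequality $F^*(x_0,\eta)\le\lambda_F(x_0)F^*(x_0,-\eta)\le\lambda_F(M)F^*(x_0,-\eta)$ and Lemma \ref{resib} then give $\lambda_F(x_0)=\lambda_F(M)=1$. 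With these two repairs your proof coincides in substance with the paper's; the concluding appeal to Theorem \ref{fistCan}/(iii) (equivalently Proposition \ref{seoncdimpr-p}) once $\lambda_F(M)=1$ is known is exactly what the paper does.
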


\medskip
\noindent Clearly, Theorem \ref{thirdCan} coincides with Theorem \ref{fistCan}/(iii) in the reversible case. If the sharp constants in Theorems \ref{fistCan} \& \ref{thirdCan} are achieved at some point $x_0$,
the extremals   (up to a positive scalar multiplication) are
 \[
u(x)=\left\{
\begin{array}{lll}
e^{-C\rho^2_{x_0}(x)}& \text{ if } p=2, q=0 \text{ and } n\geq 2,\\
(C+\rho_{x_0}(x)^{2-q})^{\frac1{2-p}}&\text{ if }0<q<2<p \text{ and }2<n<\frac{2(p-q)}{p-2},\\
\end{array}
\right.\ \ \mbox{where } C>0.
\]

\medskip

In the sequel we are going to study Finsler manifolds with non-negative Ricci curvature, obtaining  an extension of Statement 2 from Krist\'aly \cite{K,K2} to Finsler manifolds. To do this, set
\[
J^{\rm min}_{p,q}(x,u):=\frac{\left(\ds\int_M \min \{F^{*2}(\pm du)\}  d\mathfrak{m} \right)\left( \ds\int_M \frac{|u|^{2p-2}}{\rho^{2q-2}_{x}}    d\mathfrak{m} \right)}{\left(\ds\int_M  \frac{|u|^p}{\rho^q_{x}} d\mathfrak{m}\right)^2},\ x\in M, \ u\in C^\infty_0(M)\setminus\{0\}.\tag{1.9}\label{1.4}
\]
\begin{theorem}\label{forCan}
 Let $p,q\in \mathbb{R}$ and $n\in \mathbb{N}$ satisfying one of the  conditions of $(\ref{1.1})$  and let $(M,F, d\mathfrak{m})$ be an $n$-dimensional  forward  complete Finsler manifold with
$
\mathbf{Ric}\geq 0$ and $ \mathbf{S}\geq 0.
$

\begin{itemize}
	\item[{\rm (i)}] Assume that  $d\mathfrak{m}=d\mathfrak{m}_{BH}$ and there exists a point $x_0\in M$ such that
	$
	\lambda_F(x_0)=\lambda_F(M).
	$
	Then the following statements are equivalent:
	
	\begin{itemize}
		\item[{\rm (a)}] $J^{\rm min}_{p,q}(x_0,u)\geq \frac{(n-q)^2}{p^2}$ for every $u\in C^\infty_0(M)\setminus\{0\};$
		\item[{\rm (b)}]  $J^{\rm min}_{p,q}(x,u)\geq \frac{(n-q)^2}{p^2}$ for every $u\in C^\infty_0(M)\setminus\{0\}$ and  $x\in M;$
		\item[{\rm (c)}] $(M,F)$ satisfies $\lambda_F(M)=1$, $\mathbf{K}=0$ and $\mathbf{S}_{BH}=0$.
	\end{itemize}
	
	\item[{\rm (ii)}]  Assume that  $\lambda_F(M)=1$ and  there exists a point $x_0\in M$ such that
$
	\mathscr{L}_\mathfrak{m}(x_0)= \sup_{x\in M}\mathscr{L}_\mathfrak{m}(x).
$
	Then the following statements are equivalent:

		\begin{itemize}
		\item[{\rm (a)}] $J^{\rm min}_{p,q}(x_0,u)\geq \frac{(n-q)^2}{p^2}$ for every $u\in C^\infty_0(M)\setminus\{0\};$
		\item[{\rm (b)}] $J^{\rm min}_{p,q}(x,u)\geq \frac{(n-q)^2}{p^2}$ for every $u\in C^\infty_0(M)\setminus\{0\}$ and  $x\in M;$
		\item[{\rm (c)}] $(M,F,d\mathfrak{m})$ satisfies $d\mathfrak{m}\in [d\mathfrak{m}_{BH}]$,
		$
		\mathbf{K}=0 \text{ and } \mathbf{S}=\mathbf{S}_{BH}=0.
		$
	\end{itemize}
\end{itemize}
\end{theorem}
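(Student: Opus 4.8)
The plan is to prove, in each of the two parts, the cyclic chain (b)$\Rightarrow$(a)$\Rightarrow$(c)$\Rightarrow$(b); since (b)$\Rightarrow$(a) is trivial (take $x=x_0$), the content lies in (a)$\Rightarrow$(c) and (c)$\Rightarrow$(b). Two facts carry the argument. First, the Laplacian comparison: under $\mathbf{Ric}\ge0$ and $\mathbf{S}\ge0$ one has $\Delta\rho_x\le\frac{n-1}{\rho_x}$ away from $x$ and its cut locus, equivalently $r\mapsto\widehat{\sigma}_x(r,y)/r^{n-1}$ is non-increasing along every geodesic ray from $x$ (here $\widehat{\sigma}_x(r,y)$ is the density of $d\mathfrak{m}$ in forward polar coordinates), with $\widehat{\sigma}_x(r,y)/r^{n-1}\to e^{-\tau(y)}$ as $r\to0^+$. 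Second, if $u=\phi(\rho_x)$ then, a.e., $\min\{F^{*2}(\pm du)\}=|\phi'(\rho_x)|^2\min\{1,F^{*2}(-d\rho_x)\}\le|\phi'(\rho_x)|^2$, since $F^*(d\rho_x)=1$, with equality precisely where $F^*(-d\rho_x)\ge1$ — automatic when $F$ is reversible.

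For (a)$\Rightarrow$(c), I would feed into $J^{\rm min}_{p,q}(x_0,\cdot)$ the truncations $u_k$ of $\phi(\rho_{x_0})$, where $\phi(t)=e^{-Ct^2}$ in case (I) and $\phi(t)=(C+t^{2-q})^{1/(2-p)}$ in case (II) (the Euclidean extremal profiles), and pass to the limit. By the chain rule $|\phi'(\rho)|^2=\kappa\,\rho^{2-2q}\phi^{2p-2}$ for an explicit $\kappa>0$, so the second fact gives $\int_M\min\{F^{*2}(\pm du_k)\}\,d\mathfrak{m}\le\kappa\int_M\rho_{x_0}^{2-2q}|u_k|^{2p-2}\,d\mathfrak{m}$; an integration by parts in the radial variable using $\frac{d}{dr}\log\widehat{\sigma}_{x_0}(r,y)=\Delta\rho_{x_0}\le\frac{n-1}{r}$ and discarding a non-negative cut-locus boundary term then yields $\int_M\rho_{x_0}^{2-2q}|u_k|^{2p-2}\,d\mathfrak{m}\le\mu\int_M\rho_{x_0}^{-q}|u_k|^p\,d\mathfrak{m}$, where the exponent balance in $(\ref{1.1})$ is exactly what makes $\kappa\mu^2=(n-q)^2/p^2$. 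Hence $J^{\rm min}_{p,q}(x_0,u_k)\le(n-q)^2/p^2$, and with (a) this becomes an equality in the limit, forcing equality in each step. Equality in the second fact gives $F^*(-d\rho_{x_0})\equiv1$ a.e., which with $\lambda_F(x_0)=\lambda_F(M)$ forces $\lambda_F(M)=1$ in part (i) (in part (ii) this is a hypothesis); equality in the integration by parts forces $\frac{d}{dr}\log\widehat{\sigma}_{x_0}(r,y)=\frac{n-1}{r}$ for all $r$ and the vanishing of the boundary term, so $\exp_{x_0}$ has no cut locus, the radial flag curvatures and $\mathbf{S}$ vanish along every ray from $x_0$, whence $\mathbf{S}=\mathbf{S}_{BH}=0$ (in part (i), $d\mathfrak{m}=d\mathfrak{m}_{BH}$; in part (ii) one moreover gets $d\mathfrak{m}\in[d\mathfrak{m}_{BH}]$), and — by the rigidity of radially flat metrics, $\exp_{x_0}$ being a diffeomorphism onto a forward complete manifold makes $(M,F)$ isometric to the Minkowski space $(T_{x_0}M,F_{x_0})$ — $\mathbf{K}=0$; this is (c). In part (ii) the extremality of $x_0$ is imposed through $\mathscr{L}_{\mathfrak{m}}$ rather than through $\lambda_F$.

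For (c)$\Rightarrow$(b): in case (c) one has $\lambda_F=1$, so $\min\{F^{*2}(\pm du)\}=F^{*2}(du)$; moreover $\mathbf{K}=0$ makes the Jacobi fields along geodesics linear and $d\mathfrak{m}\in[d\mathfrak{m}_{BH}]$ with $\mathbf{S}_{BH}=0$ makes the polar density equal to $c\,r^{n-1}$, so through the coarea formula $(M,F,d\mathfrak{m})$ reproduces the Euclidean model and $J^{\rm min}_{p,q}(x,u)\ge(n-q)^2/p^2$ follows from the one-dimensional Hardy-type reduction underlying the classical proofs of $(\ref{new1.1})$--$(\ref{new1.2})$, exactly as in the Cartan--Hadamard case of Theorem~\ref{fistCan}\,(i). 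I expect the genuine obstacle to be the last part of (a)$\Rightarrow$(c): upgrading the \emph{infinitesimal} rigidity supplied by the extremal test function — which a priori only controls the Ricci curvature and the directions radial from $x_0$ — to the \emph{global} conclusions that $\mathbf{K}=0$ on all flags and (in part (i)) $\lambda_F(M)=1$. This needs the rigidity of the Bishop--Gromov comparison on a forward complete Finsler manifold (no cut locus $\Rightarrow$ a cone/Minkowski splitting) together with simultaneous control of the genuinely non-symmetric quantities $F^*(\pm d\rho_{x_0})$ and of the distortion $\tau$ near $x_0$, which is precisely where the three non-Riemannian features interact.
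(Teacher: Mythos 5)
Your reduction of (a)$\Rightarrow$(c) up to the volume identity is essentially sound and is a legitimate variant of the paper's argument: the paper (Proposition \ref{Ricclemm-p} and its case (II) analogue) also tests the assumed inequality with the Euclidean extremal profiles, but it uses the whole one-parameter family $u_s$ and a monotonicity/ODE argument for $\mathscr{T}(s)/T(s)$ built from the layer-cake formula and the bound $\mathfrak{m}(B^+_{x_0}(r))\le \mathscr{L}_{\mathfrak{m}}(x_0)r^n$, whereas you use a single profile plus an equality analysis of the radial comparison; both routes land on the same two facts, namely $\mathfrak{m}(B^+_{x_0}(r))=\mathscr{L}_{\mathfrak{m}}(x_0)r^n$ for all $r>0$ and $\min\{1,F^{*2}(-d\rho_{x_0})\}=1$, and your use of $\lambda_F(x_0)=\lambda_F(M)$ together with Lemma \ref{resib} to get $\lambda_F(M)=1$ in part (i) is exactly the paper's reversibility step.

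The genuine gap is the globalization you yourself flag and then resolve with an unavailable tool. From the equality analysis you only get \emph{radial} rigidity at the single point $x_0$ ($\mathbf{K}(\dot\gamma_y(t),\cdot)=0$, $\mathbf{S}(\dot\gamma_y(t))=0$ along rays from $x_0$, no cut locus, cone measure). Your upgrade to (c) via ``rigidity of radially flat metrics: $\exp_{x_0}$ a diffeomorphism onto a forward complete manifold makes $(M,F)$ isometric to the Minkowski space $(T_{x_0}M,F_{x_0})$'' is not a theorem in Finsler geometry: radial flag-curvature vanishing from one point does not determine the fundamental tensor in non-radial directions, and the paper itself stresses (Remark \ref{elso-remark}/(ii) and the Appendix) that even full $\mathbf{K}=0$ together with $\mathbf{S}_{BH}=0$ need not give a (locally) Minkowskian — not even Berwaldian — metric, the forward complete case being unresolved beyond Randers spaces; so no such splitting can be invoked, and in any case you would need it to output statements ($\mathbf{K}=0$ on \emph{all} flags, $\mathbf{S}=0$, and in part (ii) $d\mathfrak{m}\in[d\mathfrak{m}_{BH}]$, which you assert without argument). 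The paper closes this gap differently, by Lemma \ref{lemm3}: since under $\mathbf{Ric}\ge0$, $\mathbf{S}\ge0$ the ratio $r\mapsto \mathfrak{m}(B^+_x(r))/r^n$ is non-increasing at \emph{every} center, the inclusion $B^+_x(r)\supset B^+_{x_0}(r-d_F(x,x_0))$ together with the extremality hypothesis $\mathscr{L}_{\mathfrak{m}}(x_0)=\sup_x\mathscr{L}_{\mathfrak{m}}(x)$ (automatic in part (i), where $d\mathfrak{m}=d\mathfrak{m}_{BH}$ makes $\mathscr{L}_{\mathfrak{m}}$ constant by Lemma \ref{lemm1}) propagates the identity to $\mathfrak{m}(B^+_x(r))=\mathscr{L}_{\mathfrak{m}}(x_0)r^n$ for every $x$ and $r$; the equality case of the Zhao--Shen comparison at every center then gives $\mathbf{K}\equiv0$ and $\mathbf{S}\equiv0$ on all flags and directions, and Lemma \ref{lemm1} yields $d\mathfrak{m}\in[d\mathfrak{m}_{BH}]$, hence $\mathbf{S}=\mathbf{S}_{BH}=0$. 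This volume-propagation step is precisely where the $\mathscr{L}_{\mathfrak{m}}$-extremality (or $d\mathfrak{m}=d\mathfrak{m}_{BH}$) hypothesis enters, and your proposal never uses it; without it (or Lemma \ref{lemm3}) the implication (a)$\Rightarrow$(c) is not established. (Your (c)$\Rightarrow$(b) sketch also tacitly assumes the polar density is $c\,r^{n-1}$ globally, i.e.\ no cut locus; the paper treats this direction as immediate as well, so I only note it in passing.)
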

\medskip
\begin{remark} \rm
The conditions $\mathbf{Ric}\geq 0$, $\mathbf{S}\geq 0$ in Theorem \ref{forCan} give an upper bound of $\mathfrak{m}(B^+_{x_0}(r))$, which is indispensable in our proof. We note that another important Ricci curvature in Finsler geometry is the weighted Ricci curvature $\mathbf{Ric}_N$ for $N\in [n,\infty)$, see Ohta and Sturm \cite{Ot}. However, this curvature is more suitable to study the relative volume comparison rather than estimate the volume of small balls, cf. Ohta \cite{O}. Moreover, if $\mathbf{Ric}_N\geq 0$ and there exist two positive constants $C,\epsilon$ such that $\mathfrak{m}(B^+_{x_0}(r))\leq C r^N$ for $r\in (0,\epsilon)$, then Ohta \cite[Theorem 1.2]{O}  together with Zhao and Shen \cite[Lemma 3.1]{ZS} furnishes $\mathbf{Ric}=\mathbf{Ric}_N\geq 0$, $\mathbf{S}=0$ and $N=n$.
\end{remark}

\begin{remark}\label{elso-remark} \rm (i) On one hand,  Theorems \ref{fistCan}-\ref{forCan} show that the Busemann-Hausdorff measure is the 'optimal' one to study sharp uncertainty principles on  Finsler manifolds. In particular, if we apply Theorems \ref{fistCan}-\ref{forCan} on a \textit{reversible} Berwald space $(M,F)$ equipped with the Holmes-Thompson measure $d\mathfrak{m}_{HT}$, it turns out from our proof that $\mathscr{L}_{\mathfrak{m}_{HT}}$ is a constant and the $S$-curvature induced by  $d\mathfrak{m}_{HT}$ vanishes; therefore,
$d\mathfrak{m}_{HT}=C d\mathfrak{m}_{BH}$ for some $0<C\leq 1$ with equality if and only if $F$ is Riemannian.
	On the other hand, Theorems \ref{fistCan}-\ref{forCan} also show that even on simplest \textit{non-reversible} Berwald spaces (equipped with the Busemann-Hausdorff measure) the sharp  constants \textit{cannot} be achieved in sharp uncertainty principles; the Minkowski space $(\mathbb R^2,F_t)$  in (\ref{elso-minko-norma}) falls precisely into this class whenever $t>0$.

	(ii)  According to Theorems \ref{fistCan}-\ref{forCan}, the existence of extremals corresponding to the sharp
	constants implies the vanishing of both the flag curvature and  $S$-curvature induced by $d\mathfrak{m}_{BH}$. 
	A well-known fact   is that a flat Riemannian manifold $(M^n,g)$ is always locally isometric to $\mathbb{R}^n$ and  is  globally isometric to $\mathbb{R}^n$ whenever  $(M^n,g)$ is simply-connected and complete. Intuitively, a Finsler manifold with $\mathbf{K}=0$ and $\mathbf{S}_{BH}=0$ should be (at least locally) Minkowskian. However, this is \textit{not} true in general, see Shen \cite{Shen2003}. In fact, by using the Zermelo navigation problem we construct in the Appendix a whole class of examples   which satisfy these curvature vanishing properties but are not Berwaldian (hence, not Minkowskian); all these examples are \textit{non-complete} Finsler manifolds. However, if we suppose additionally that the Finsler manifold is either \textit{reversible} or \textit{forward complete}, all such examples are \textit{Minkowskian}, see e.g. Shen \cite[Theorem 1.2]{Shen2003} for Randers spaces. Up to now,  no full classification is available concerning this issue.

\end{remark}

%
%
We conclude this section by  considering the Hardy inequality, i.e., $p=q=2$ and $n\geq 3$. 
\begin{theorem}\label{Hardyineq}Given $n\geq 3$,
let $(M,F, d\mathfrak{m})$ be an $n$-dimensional forward complete Finsler manifold with $\mathbf{K}\leq 0$ and $\mathbf{S}\leq 0$. Then $$J^{\rm max}_{2,2}(x,u)\geq \frac{(n-2)^2}{4},\ \ \forall x\in M,\ u\in C^\infty_0(M)\setminus\{0\}.$$
In addition, if $F$ is reversible, then the constant $\frac{(n-2)^2}{4}$ is sharp but never achieved.
\end{theorem}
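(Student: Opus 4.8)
The plan is to reduce the Finslerian Hardy inequality to a one-dimensional radial computation via the co-area formula, exactly as in the proof strategy behind Theorems 1.1 and 1.2. Fix $x\in M$ and write $\rho=\rho_x=d_F(x,\cdot)$. The Cartan--Hadamard hypothesis (here only $\mathbf K\le 0$, forward completeness; simple connectedness is not needed for the inequality itself, only that $\rho$ is smooth off the cut locus, which is empty for forward geodesically complete manifolds with $\mathbf K\le 0$ by the Cartan--Hadamard theorem in the Finsler setting) guarantees that $\exp_x$ is a diffeomorphism, so $\rho$ is smooth on $M\setminus\{x\}$ with $F^*(d\rho)=1$ there, and that the Jacobian $\mathbf{J}_x(r,\xi)$ of $\exp_x$ in polar coordinates is nondecreasing in $r$ after an appropriate normalization. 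The condition $\mathbf S\le 0$ is precisely what is needed to control the extra density coming from the arbitrary measure $d\mathfrak m$: writing $d\mathfrak m = e^{-\psi_x}\,dr\,d\nu$ in polar coordinates, one gets from $\mathbf S\le 0$ a monotonicity/convexity estimate on $\psi_x$ that makes the radial volume density behave at least as well as $r^{n-1}$. The first step, therefore, is to record these two facts: (a) $F^*(d\rho)\equiv 1$ away from $x$, and (b) a differential inequality for the polar volume density implying $\partial_r\big(\log(\text{density})\big)\ge \frac{n-1}{r}$ in the distributional sense.

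Second, I would run the standard vector-field (Picone-type) argument. Choose the radial test vector field associated to $w=-\log\rho$, or more directly use the substitution that in the Euclidean proof of (1.3) gives the extremal profile $\rho^{-(n-2)/2}$: integrate by parts the identity $\operatorname{div}_{\mathfrak m}\!\big(\rho^{-(n-2)}\,\nabla^F(\text{something})\big)$, or — cleaner — apply the elementary inequality $\max\{F^{*2}(\pm du)\}\ge F^{*2}(du)\ge \big(du(\nabla^F\rho)\big)^2 = \big(\partial_\rho u\big)^2$ pointwise (using $F^*(d\rho)=1$ and the Cauchy--Schwarz inequality for $g_{\nabla^F\rho}$), and then invoke the one-dimensional Hardy inequality on each geodesic ray against the measure $(\text{density})\,dr$. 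Here the monotonicity from step one is exactly what upgrades the bare one-dimensional weight to one supporting the constant $\big(\tfrac{n-2}{2}\big)^2$: integrating $\int_0^\infty (u')^2\,\mu(r)\,dr \ge \tfrac{(n-2)^2}{4}\int_0^\infty \tfrac{u^2}{r^2}\,\mu(r)\,dr$ for any weight $\mu$ with $\mu(r)/r^{n-1}$ nondecreasing, which follows by the usual completion-of-the-square $\int \mu\big(u' + \tfrac{n-2}{2r}u\big)^2 + (\text{nonnegative boundary/derivative terms})$. Integrating back over the indicatrix $S_xM$ with $d\nu_x$ yields $J^{\mathrm{max}}_{2,2}(x,u)\ge \tfrac{(n-2)^2}{4}$.

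For sharpness when $F$ is reversible, I would test with the truncated near-extremals $u_\varepsilon(x)=\min\{\rho^{-(n-2)/2+\varepsilon},\,\eta\}$ suitably cut off at large $\rho$ and regularized near $x$, mimicking the classical construction: as $\varepsilon\to 0$ both integrals blow up logarithmically but their ratio tends to $\tfrac{(n-2)^2}{4}$, and the reversibility ensures $\max\{F^{*2}(\pm du_\varepsilon)\}=F^{*2}(du_\varepsilon)=(u_\varepsilon')^2$ with no loss. That no extremal exists is seen as in the Euclidean case: equality in the completion-of-the-square would force $u'=-\tfrac{n-2}{2r}u$ on every ray, i.e. $u\sim\rho^{-(n-2)/2}$, which is not in $C^\infty_0(M)$ (not even $L^2_{\mathrm{loc}}$ near $x$ when $n\ge 3$ if one tracks the borderline case, and certainly not compactly supported), a contradiction.

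The main obstacle is step one in the Finslerian generality: establishing the polar volume comparison $\partial_r\log(\text{density})\ge \frac{n-1}{r}$ simultaneously from $\mathbf K\le 0$ \emph{and} $\mathbf S\le 0$ for an \emph{arbitrary} measure $d\mathfrak m$, and handling the (possible) nonsmoothness of $\rho$ — although for forward complete $\mathbf K\le 0$ Finsler manifolds the exponential map is a diffeomorphism (Finslerian Cartan--Hadamard), so the cut locus is empty and $\rho$ is smooth on $M\setminus\{x\}$; one still must check carefully that the Finslerian Laplacian comparison with the $\mathbf S$-correction gives the stated density monotonicity, and that the pointwise bound $F^{*2}(du)\ge(\partial_\rho u)^2$ is used with the correct Finslerian gradient (the gradient of $\rho$ is the reverse geodesic field, and $g_{\nabla\rho}(\nabla\rho,\nabla u)=du(\nabla\rho)=\partial_\rho u$ by the definition of the Legendre transform). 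I expect these to be exactly the ingredients already developed for Theorems 1.1--1.2, so Theorem 1.5 follows by specializing $p=q=2$ and noting that this borderline case is where extremals disappear.
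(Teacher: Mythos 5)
Your overall route is the same as the paper's: the eikonal identity $F^*(d\rho_{x_0})=1$, the density/Laplacian comparison (2.7) coming from $\mathbf{K}\le 0$ and $\mathbf{S}\le 0$, a pointwise bound of the radial derivative by $\max\{F^{*}(\pm du)\}$, and a completion-of-the-square (equivalently, the paper's integration by parts against $\Delta\rho_{x_0}\ge (n-1)/\rho_{x_0}$ plus Cauchy--Schwarz); sharpness via truncated powers $\rho_{x_0}^{-(n-2)/2}$ and non-existence via the forced extremal profile are also exactly the paper's steps. The genuine gap is your treatment of the cut locus. Theorem \ref{Hardyineq} does not assume simple connectedness, so there is no Cartan--Hadamard theorem to invoke: forward completeness plus $\mathbf{K}\le 0$ does \emph{not} make $\exp_{x_0}$ a diffeomorphism (a flat cylinder already has a nonempty cut locus), so $\rho_{x_0}$ is in general only smooth off a nontrivial ${\rm Cut}_{x_0}$ and the rays must be cut at $r=i_y$. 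Your ray-wise completion of the square then produces the boundary term $\frac{n-2}{2}\,\frac{\mu(i_y)}{i_y}\,u^2(i_y)\ge 0$ on the wrong side of the inequality whenever $u$ does not vanish before the cut value, so the argument as written does not close in the stated generality. The paper copes with this differently: Lemma \ref{implem} (via the polar-coordinate domain of Yuan--Zhao--Shen, integrating only over $r<\min\{R,i_y\}$) guarantees $\int_M u^2/\rho_{x_0}^2\,d\mathfrak m<+\infty$, the main estimate is run through the weak Laplacian identity (\ref{new2.4}) together with the comparison, and the whole sharpness construction is carried out inside $B_{x_0}(R)$ with $R<\mathfrak i_{x_0}$, using only positivity of the injectivity radius. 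You need either this localisation or an explicit justification of the comparison across the cut locus before your proof covers the theorem as stated.

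Two smaller inaccuracies. First, the chain $\max\{F^{*2}(\pm du)\}\ge F^{*2}(du)\ge \bigl(du(\nabla\rho_{x_0})\bigr)^2$ is false in the middle for non-reversible $F$ when $du(\nabla\rho_{x_0})<0$; only $\max\{F^{*2}(\pm du)\}\ge \bigl(du(\nabla\rho_{x_0})\bigr)^2$ holds, which is what you actually need (this is precisely why the paper splits $M$ into $M_\pm$, $M_0$ and uses $F^*(-du)$ on $M_-$). Second, in the non-existence step the profile $\rho_{x_0}^{-(n-2)/2}$ \emph{is} locally square integrable near $x_0$ (the exponent is subcritical for $n\ge 3$), so your parenthetical is wrong; the correct obstruction -- the one the paper uses -- is that for $u=C\rho_{x_0}^{-(n-2)/2}$ the Hardy integral $\int_M u^2/\rho_{x_0}^{2}\,d\mathfrak m$ diverges logarithmically at the origin, as seen from the density lower bound $\hat\sigma_{x_0}(r,y)\ge e^{-\tau(y)}r^{n-1}$, contradicting the admissibility of an extremal.
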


We note that Theorems \ref{fistCan},  \ref{thirdCan} and
\ref{Hardyineq} (resp., Theorem \ref{forCan}) can be established under the assumption $\mathbf{K}\leq 0,$ $\mathbf{S}\geq 0$ (resp., $\mathbf{Ric}\geq 0,$ $\mathbf{S}\leq 0$) and for backward complete Finsler manifolds; we leave the formulation of such statements to the interested reader.

\medskip

The paper is organized as follows. Section \ref{Sec2} is devoted to preliminaries on Finsler geometry together with some fine properties of the integral of distortion. In Section \ref{Sec4}  the Heisenberg-Pauli-Weyl principle, in Section \ref{Sec5}  the Caffarelli-Kohn-Nirenberg interpolation inequality, while in Section  \ref{section-Hardy} the Hardy inequality is  discussed.  The Appendix is devoted to the detailed discussion of the examples mentioned in (\ref{elso-minko-norma}) and (\ref{Funk-nulla}) as well as the  construction of some non-Berwaldian spaces with $\mathbf{K}=0$ and $\mathbf{S}_{BH}=0$, respectively, inspired by the Zermelo navigation problem.

\section{Preliminaries}\label{Sec2}
\subsection{Elements from Finsler geometry} In this section, we recall some definitions and properties from Finsler geometry; for details see  Bao, Chern and Shen \cite{BCS} and Shen \cite{Shen2013,Sh1}.

\subsubsection{Finsler manifolds.}
%
%
%

 Let $M$ be a connected
$n$-dimensional smooth manifold and $TM=\bigcup_{x \in M}T_{x}
M $ be its tangent bundle. The pair $(M,F)$ is a \textit{Finsler
	manifold} if the continuous function $F:TM\to [0,\infty)$ satisfies
the conditions

(a) $F\in C^{\infty}(TM\setminus\{ 0 \});$

(b) $F(x,\lambda y)=\lambda F(x,y)$ for all $\lambda\geq 0$ and $(x,y)\in TM;$

(c) $g_y:=g_{ij}(x,y)=[\frac12F^{2}%
]_{y^{i}y^{j}}(x,y)$ is positive definite for all $(x,y)\in
TM\setminus\{ 0 \}$ where $F(x,y)=F(y^i\frac{\partial}{\partial x^i}|_x)$.

Let $\pi:PM\rightarrow M$ and $\pi^*TM$ be the projective sphere bundle and the pullback bundle, respectively. The
Finsler metric $F$ induces a natural Riemannian metric $g=g_{ij}(x,[y])\,d\mathfrak{x}^i\otimes d\mathfrak{x}^j$, which is  the so-called {\it fundamental tensor} on  $\pi^*TM$, where
\[
g_{ij}(x,[y]):=\frac12\frac{\partial^2
F^2(x,y)}{\partial y^i\partial
y^j}, \ d\mathfrak{x}^i=\pi^*d x^i.
\]
The Euler theorem yields that $F^2(x,y)=g_{ij}(x,[y])y^iy^j$ for every $(x,y)\in TM\backslash\{0\}$. Note that $g_{ij}$ can be viewed as a local function on $TM\backslash\{0\}$, but it cannot be defined at $y=0$ unless $F$ is Riemannian.

The dual Finsler metric $F^*$ of $F$ on $M$ is
defined by
\begin{equation*}
F^*(x,\eta):=\underset{y\in T_xM\backslash \{0\}}{\sup}\frac{\eta(y)}{F(x,y)}, \ \
\forall \eta\in T_x^*M,
\end{equation*}
which is also a Finsler metric on $T^*M$.
The Legendre transformation $\mathfrak{L} : TM \rightarrow T^*M$ is defined
by
\begin{equation*}
\mathfrak{L}(X):=\left \{
\begin{array}{lll}
& g_X(X,\cdot) & \ \ \ X\neq0, \\
& 0 & \ \ \ X=0.%
\end{array}
\right.
\end{equation*}
In particular,  $\mathfrak{L}:TM\backslash\{0\}\rightarrow T^*M\backslash\{0\}$ is a diffeomorphism with $F^*(\mathfrak{L}(X))=F(X)$, $X\in TM$.
Now let $f : M \rightarrow \mathbb{R}$ be a smooth function on $M$; the
gradient of $f$ is defined as $\nabla f = \mathfrak{L}^{-1}(df)$. Thus,  $df(X) = g_{\nabla f} (\nabla f,X)$.

Let $\varphi$ be a  piecewise
$C^1$-function on $M$ such that every $\varphi^{-1}(t)$ is compact. The (area) measure on $\varphi^{-1}(t)$ is defined by
$dA:=(\nabla \varphi)\rfloor d\mathfrak{m}$. Then for any continuous function $f$ on $M$ we have the  \textit{co-area formula}
\[
\ds\int_Mf\, F(\nabla\varphi)\,d\mathfrak{m}=\int^{\infty}_{-\infty}\left( \ds\int_{\varphi^{-1}(t)}f\, dA\right)dt,\tag{2.1}\label{new2.1}
\]
see Shen \cite[Section 3.3]{Sh1}.
Define the divergence of a vector field $X$ by
\[
\di(X)\, d\mathfrak{m}:=d\left( X\rfloor d\mathfrak{m}\right).
\]
If $M$ is compact and oriented, we have the  divergence theorem
\[
\ds\int_M\di(X)d\mathfrak{m}=\ds\int_{\partial M} g_{\mathbf{n}}(\mathbf{n},X)\,d A,\tag{2.2}\label{new2.2}
\]
where $dA=\mathbf{n}\rfloor d\mathfrak{m}$, and $\mathbf{n}$ is the unit outward normal vector field along $\partial M$, i.e., $F(\mathbf{n})=1$ and $ g_{\mathbf{n}}(\mathbf{n},Y)=0$ for any $Y\in T(\partial M)$.

Given a $C^2$-function $f$, set $\mathcal {U}=\{x\in M:\, df|_x\neq0\}$. The \textit{Laplacian} of $f\in C^2(M)$ is defined on $\mathcal {U}$ by
\begin{align*}
\Delta f:=\text{div}(\nabla f)=\frac{1}{\sigma(x)}\frac{\partial}{\partial x^i}\left(\sigma(x)g^{*ij}(df|_x)\frac{\partial f}{\partial x^j}\right),\tag{2.3}\label{new2.3}
\end{align*}
where $(g^{*ij})$ is the fundamental tensor of $F^*$ and $x\mapsto \sigma(x)$ is the density function of  $d\mathfrak{m}$ in a local coordinate system $(x^i)$.  As in Ohta and Sturm \cite{Ot}, we define
the distributional Laplacian of $u\in W^{1,2}_{\text{loc}}(M)$
in the weak sense by
\[
\ds\int_M v{\Delta} u d\mathfrak{m}=-\ds\int_M\langle dv,\nabla u\rangle d\mathfrak{m} \text{ for all }v\in C^\infty_0(M),\tag{2.4}\label{new2.4}
\]
where $\langle dv,\nabla u\rangle= dv(\nabla u)$ at $x\in M$ denotes the canonical pairing between $T_x^*M$ and $T_xM.$

A smooth curve $t\mapsto \gamma(t)$ in $M$ is called a (constant speed) \textit{geodesic} if it satisfies
\[
\frac{d^2\gamma^i}{dt^2}+2G^i\left(\frac{d\gamma}{dt}\right)=0,
\]
where
\begin{align*}
G^i(y):=\frac14 g^{il}(y)\left\{2\frac{\partial g_{jl}}{\partial x^k}(y)-\frac{\partial g_{jk}}{\partial x^l}(y)\right\}y^jy^k
\end{align*}
is the geodesic coefficient.
In this paper, we always use $\gamma_y(t)$ to denote  the geodesic with $\dot{\gamma}_y(0)=y$.

$(M,F)$ is {\it forward complete} if  every geodesic $t\mapsto \gamma(t)$, $0\leq t<1$, can be extended to a geodesic defined on $0\leq t<\infty$; similarly,  $(M,F)$ is  {\it backward complete} if  every geodesic $t\mapsto \gamma(t)$, $0< t\leq 1$, can be extended to a geodesic defined on $-\infty< t\leq 1$.

\subsubsection{Curvatures} The {\it Riemannian curvature} $R_y$ of $F$ is a family of linear transformations on tangent spaces. More precisely, set
$R_y:=R^i_k(y)\frac{\partial}{\partial x^i}\otimes dx^k$, where
\[
R^i_{\,k}(y):=2\frac{\partial G^i}{\partial x^k}-y^j\frac{\partial^2G^i}{\partial x^j\partial y^k}+2G^j\frac{\partial^2 G^i}{\partial y^j \partial y^k}-\frac{\partial G^i}{\partial y^j}\frac{\partial G^j}{\partial y^k}.
\]
Let $P:=\text{Span}\{y,v\}\subset T_xM$ be a plane; the \textit{flag curvature} is defined by
\[
\mathbf{K}(y,v):=\frac{g_y\left( R_y(v),v  \right)}{g_y(y,y)g_y(v,v)-g^2_y(y,v)}.
\]
The  {\it Ricci curvature} of $y$
is defined by
\[
\mathbf{Ric}(y):=\underset{i}{\sum}\,\textbf{K}(y,e_i),
\]
where $e_1,\ldots, e_n$ is a $g_y$-orthonormal basis on $(x,y)\in
TM\backslash\{0\}$.

Let $\zeta:[0,1]\rightarrow M$ be a Lipschitz continuous path. The \textit{length} of $\zeta$ is defined by
\[
L_F(\zeta):=\int^1_0 F({\zeta}(t),\dot{\zeta}(t))dt.
\]
Define the distance function $d_F:M\times M\rightarrow [0,+\infty)$ by
$d_F(p,q):=\inf L_F(\zeta)$,
where the infimum is taken over all
Lipshitz continuous paths $\zeta:[0,1]\rightarrow M$ with
$\zeta(0)=p$ and $\zeta(1)=q$. Note that generally $d_F(p,q)\neq d_F(q,p)$,  unless $F$ is reversible.

Let $R>0$; the forward and backward metric balls $B^+_p(R)$ and $B^-_p(R)$ are defined by
\[
B^+_p(R):=\{x\in M:\, d_F(p,x)<R\},\ B^-_p(R):=\{x\in M:\, d_F(x,p)<R\}.
\]
If $F$ is reversible, forward and backward metric balls coincide which are denoted by $B_p(R)$.

Given $x_0\in M$, set $\rho_{x_0}(x):=d_F(x_0,x)$ and $\varrho_{x_0}(x):=d_F(x,x_0)$. In general, $\rho_{x_0}(x)\neq\varrho_{x_0}(x)$ unless $F(x_0,\cdot)$ is reversible, cf. \cite[Exercise 6.3.4]{BCS}. Moreover, one has by Shen \cite[Lemma 3.2.3]{Sh1} the eikoinal relations
\begin{equation}\label{eikonal}\tag{2.5}
F^*(d\rho_{x_0})=F(\nabla \rho_{x_0})=1,\ F^*(-d\varrho_{x_0})=F(\nabla (-\varrho_{x_0}))=1\ \ {\rm a.e.\ on}\ M.
\end{equation}


\subsubsection{Measures}

Let $d\mathfrak{m}$ be a measure on $M$; in a local coordinate system $(x^i)$ we
express $d\mathfrak{m}=\sigma(x)dx^1\wedge\ldots\wedge dx^n$. In particular,
the \textit{Busemann-Hausdorff measure} $d\mathfrak{m}_{BH}$ and the \textit{Holmes-Thompson measure} $d\mathfrak{m}_{HT}$ are defined by
\begin{align*}
&d\mathfrak{m}_{BH}:=\frac{\vol(\mathbb{B}^{n})}{\vol(B_xM)}dx^1\wedge\ldots\wedge dx^n,\\
 &d\mathfrak{m}_{HT}:=\left(\frac1{\vol(\mathbb{B}^{n})}\ds\int_{B_xM}\det g_{ij}(x,y)dy^1\wedge\ldots\wedge dy^n \right) dx^1\wedge\ldots\wedge dx^n,
\end{align*}
where $B_xM:=\{y\in T_xM: F(x,y)<1\}$ and $\mathbb{B}^{n}$ is the usual Euclidean $n$-dimensional unit ball.

Define the \textit{distortion} of $(M,F,d\mathfrak{m})$ as
\begin{equation*}
\tau(y):=\log \frac{\sqrt{\det g_{ij}(x,y)}}{\sigma(x)},\ \text{$y\in T_xM\backslash\{0\}$},
\end{equation*}
and the \textit{$S$-curvature} $\mathbf{S}$ given by
\begin{equation*}
\mathbf{S}(y):=\left.\frac{d}{dt}\right|_{t=0}[\tau(\dot{\gamma}_y(t))].
\end{equation*}

 The \textit{cut value} $i_y$ of $y\in S_xM$ is defined by
\[
i_y:=\sup\{r: \text{ the segment }\gamma_y|_{[0,r]} \text{ is globally minimizing}  \}.
\]
Hereafter, $S_xM:=\{y\in T_xM:F(x,y)=1\}$ and $SM:=\cup_{x\in M}S_xM$.
The \textit{injectivity radius} at $x$ is defined as $\mathfrak{i}_x:=\inf_{y\in S_xM} i_y$, whereas the \textit{cut locus} of $x$ is
\[
\text{Cut}_x:=\left\{\gamma_y(i_y):\,y\in S_xM \text{ with }i_y<\infty \right\}.
\]
Note that $\text{Cut}_x$ is closed and has null measure.

As in Zhao and Shen \cite{ZS}, if $x\in M$ is fixed, let $(r,y)$ be the polar coordinate system around $x$. Note that $r(w)=\rho_{x}(w)$ for any $w\in M$. Given an arbitrary measure $d\mathfrak{m}$, write
\[
d\mathfrak{m}:=\hat{\sigma}_x(r,y)dr\wedge d\nu_x(y),
\]
where $d\nu_x(y)$ is the Riemannian volume measure induced by $F$ on $S_xM$. Note that
\[
\lim_{r\rightarrow 0^+}\frac{\hat{\sigma}_x(r,y)}{r^{n-1}}=e^{-\tau(y)}.\tag{2.6}\label{new2.5}
\]

\subsubsection{Comparison principles.}
According to Zhao and Shen \cite[Theorems 3.4 \& 3.6, Remark 3.5]{ZS}, we have the following \textit{volume comparisons}:

\begin{itemize}
	\item[{\rm (i)}] If $\mathbf{K}\leq 0$ and $\mathbf{S}\leq 0$, for each $y\in S_xM$ we have
	\[
	\Delta r=\frac{\partial}{\partial r}\log \hat{\sigma}_x(r,y)\geq \frac{n-1}{r},\ 0<r< \mathfrak{i}_x.\tag{2.7}\label{new2.6}
	\]
	Hence,
	\[
	f(r):=\frac{\mathfrak{m}(B^+_x(r))}{\left(\ds\int_{S_xM}e^{-\tau(y)}d\nu_x(y)\right)\frac{r^n}{n}},\ 0<r< \mathfrak{i}_x,\tag{2.8}\label{new2.7}
	\]
	is non-decreasing and $f(r)\geq 1$,
	with equality for some $r_0>0$ if and only if $\mathbf{K}(\dot{\gamma}_y(t),\cdot)\equiv0$ and $\mathbf{S}(\dot{\gamma}_y(t))\equiv0$ for any $y\in S_xM$ and $0\leq t\leq r_0\leq \mathfrak{i}_x$.
	
	\item[{\rm (ii)}] If $\mathbf{Ric}\geq 0$ and $\mathbf{S}\geq 0$,
 for each $y\in S_xM$ we have
	\[
	\Delta r=\frac{\partial}{\partial r}\log \hat{\sigma}_x(r,y)\leq \frac{n-1}{r},\ 0<r< \mathfrak{i}_x.\tag{2.9}\label{new2.8}
	\]
	Therefore,
	\[
	f(r):=\frac{\mathfrak{m}(B^+_x(r))}{\left(\ds\int_{S_xM}e^{-\tau(y)}d\nu_x(y)\right)\frac{r^n}{n}},\ r>0,\tag{2.10}\label{new2.9}
	\]
	is non-increasing and $f(r)\leq 1$, with equality for some $r_0>0$ if and only if $\mathbf{K}(\dot{\gamma}_y(t),\cdot)\equiv0$ and $\mathbf{S}(\dot{\gamma}_y(t))\equiv0$ for any $y\in S_xM$ and $0\leq t\leq r_0\leq \mathfrak{i}_x$.
	
\end{itemize}

\subsubsection{Reversibility} The {\it reversibility} on $(M,F)$ is given by
\[
\lambda_F(M):=\sup_{x\in M}\lambda_F(x) \ \ {\rm with}\ \ \lambda_F(x)= \sup_{y\in T_xM\setminus\{0\}} \frac{F(x,-y)}{F(x,y)},
\]
 see Rademacher \cite{R}.   It is clear that $\lambda_F(M) =1$ if and only if $F $ is reversible.
Let $F^*$ be the dual Finsler metric of $F$. Set
\[
\lambda_{F^*}(x):=\sup_{\eta  \in T^*_xM\setminus\{0\}}\frac{F^*(x,-\eta)}{F^*(x,\eta)},\ \ \ \  \lambda_{F^*}(M):=\sup_{x\in M}\lambda_{F^*}(x).
\]
\begin{lemma}\label{reversLem}For each $x\in M$, one has
$\lambda_{F^*}(x)=\lambda_F(x)$ and hence, $\lambda_{F^*}(M)=\lambda_F(M)$.
\end{lemma}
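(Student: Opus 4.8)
The plan is to prove the fibrewise identity $\lambda_{F^*}(x)=\lambda_F(x)$ for each fixed $x\in M$; the equality $\lambda_{F^*}(M)=\lambda_F(M)$ then follows at once by taking suprema over $x$. I would establish the two inequalities $\lambda_{F^*}(x)\le\lambda_F(x)$ and $\lambda_{F^*}(x)\ge\lambda_F(x)$ by rather different arguments, the point being that the Legendre transformation $\mathfrak L$ is not linear, so the reversibility ratio cannot simply be transported from $T_xM$ to $T_x^*M$; morally this is the statement that a convex body and its polar have the same asymmetry, but a direct proof is cleaner here.

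For the bound $\lambda_{F^*}(x)\le\lambda_F(x)$ I would argue straight from the definition of the dual norm. Fix $\eta\in T_x^*M\setminus\{0\}$; since $F^*(x,-\eta)>0$, the supremum defining it is realised over the half-space $\{y:(-\eta)(y)>0\}$, and on that half-space one has the elementary chain
\[
\frac{(-\eta)(y)}{F(x,y)}=\frac{F(x,-y)}{F(x,y)}\cdot\frac{(-\eta)(y)}{F(x,-y)}\le\lambda_F(x)\,\frac{\eta(-y)}{F(x,-y)}\le\lambda_F(x)\,F^*(x,\eta).
\]
Passing to the supremum over $y$ gives $F^*(x,-\eta)\le\lambda_F(x)\,F^*(x,\eta)$, and then taking the supremum over $\eta$ yields the inequality.

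For the reverse bound $\lambda_{F^*}(x)\ge\lambda_F(x)$ I would test with the Legendre dual covector. Fix $y\in T_xM\setminus\{0\}$ and set $\eta:=\mathfrak L(y)=g_y(y,\cdot)$, so that $F^*(x,\eta)=F(x,y)$ and, by Euler's theorem, $\eta(y)=g_y(y,y)=F(x,y)^2$. Evaluating the supremum defining $F^*(x,-\eta)$ at the particular vector $-y$ gives $F^*(x,-\eta)\ge (-\eta)(-y)/F(x,-y)=F(x,y)^2/F(x,-y)$, whence
\[
\lambda_{F^*}(x)\ \ge\ \frac{F^*(x,-\eta)}{F^*(x,\eta)}\ \ge\ \frac{F(x,y)}{F(x,-y)}.
\]
Taking the supremum over $y$ and substituting $y\mapsto-y$ in the right-hand side recovers $\lambda_F(x)$, and combining the two inequalities finishes the proof.

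The only delicate point — really the only thing beyond bookkeeping — is the reduction in the first step to the half-space $\{(-\eta)(y)>0\}$, which is what allows the inequality $F(x,-y)/F(x,y)\le\lambda_F(x)$ to be inserted without reversing the ratio; this reduction is legitimate precisely because $F^*(x,-\eta)>0$ for $\eta\ne0$ while $(-\eta)(y)/F(x,y)$ is nonpositive off that half-space. No global hypotheses (completeness, curvature bounds) enter: the assertion is purely a property of the Minkowski norm $F(x,\cdot)$ on $T_xM$ and its dual.
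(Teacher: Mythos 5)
Your proof is correct, and its second half takes a genuinely different route from the paper. For the inequality $\lambda_{F^*}(x)\le\lambda_F(x)$ you argue exactly as the paper does, via the chain $\frac{-\eta(y)}{F(x,y)}=\frac{\eta(-y)}{F(x,-y)}\cdot\frac{F(x,-y)}{F(x,y)}\le F^*(x,\eta)\lambda_F(x)$; your restriction to the half-space $\{(-\eta)(y)>0\}$ is a valid way to justify multiplying the two estimates, though it is not strictly needed --- one can simply multiply $\frac{\eta(-y)}{F(x,-y)}\le F^*(x,\eta)$ by the positive factor $\frac{F(x,-y)}{F(x,y)}$ and then use $F^*(x,\eta)>0$ together with $\frac{F(x,-y)}{F(x,y)}\le\lambda_F(x)$, which is what the paper's one-line chain implicitly does for all $y$. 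For the reverse inequality the paper invokes biduality, $F(x,y)=\sup_{\eta\neq 0}\eta(y)/F^*(x,\eta)$ (Hahn--Banach), and then simply swaps the roles of $F$ and $F^*$ in the same computation; you instead test the supremum defining $F^*(x,-\eta)$ with the specific covector $\eta=\mathfrak L(y)=g_y(y,\cdot)$, using $F^*(\mathfrak L(y))=F(x,y)$ and Euler's identity $g_y(y,y)=F^2(x,y)$, and evaluate at $-y$ to get $\lambda_{F^*}(x)\ge F(x,y)/F(x,-y)$. Both routes are legitimate: the paper's duality argument is purely convex-analytic, symmetric in $F$ and $F^*$, and needs no smoothness, whereas your Legendre-transform test vector gives an explicit witness for the supremum at the cost of using the fundamental tensor (strong convexity), which is of course available in the Finsler setting and already set up in the paper. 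The final passage from the fibrewise identity to $\lambda_{F^*}(M)=\lambda_F(M)$ by taking suprema over $x$ is as in the statement.
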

\begin{proof} Fix any $\eta\in T^*_xM\setminus\{0\}$; thus for any $y\in T_xM\setminus\{0\}$ one has
\[
\frac{-\eta(y)}{F(x,y)}=\frac{\eta(-y)}{F(x,-y)}\frac{F(x,-y)}{F(x,y)}\leq F^*(x,\eta)\lambda_F(x).\] Therefore, $ F^*(x,-\eta)\leq F^*(x,\eta)\lambda_F(x),
$
which implies $\lambda_{F^*}(x)\leq\lambda_F(x)$.
Note that the Hahn-Banach
theorem implies   \[
F(x,y)=\sup_{\eta\in T^*_xM\backslash\{0\}}\frac{\eta(y)}{F^*(x,\eta)}.
\]
Using this fact and changing the roles of $F$ and $F^*$ in the above argument, one has $\lambda_{F}(x)\leq\lambda_{F^*}(x)$.
\end{proof}

\begin{lemma}\label{resib}
Let $(M,F)$ be a Finsler manifold,  $x\in M$ and set $S^*_xM:=\{\eta\in T^*_xM:\, F^*(x,\eta)=1\}$. Then the following statements are equivalent:

\begin{itemize}
	\item[{\rm (i)}] $F^*(x,\eta)\geq F^*(x,-\eta),\ \forall \eta\in S^*_xM;$
	\item[{\rm (ii)}] $F^*(x,-\eta)\geq F^*(x,\eta),\ \forall \eta\in S^*_xM;$
	\item[{\rm (iii)}] $F^*(x,\eta)=\lambda_F(x) F^*(x,-\eta),\ \forall \eta\in S^*_xM;$
	\item[{\rm (iv)}] $\lambda_F(x)=1$, i.e., $F(x,\cdot)$ is reversible.
\end{itemize}
\end{lemma}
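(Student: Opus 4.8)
The plan is to establish the cycle (iv) $\Rightarrow$ (iii) $\Rightarrow$ (i) $\Rightarrow$ (iv) together with the two implications (iv) $\Rightarrow$ (ii) and (ii) $\Rightarrow$ (iv), which jointly give the full equivalence. The implications out of (iv) are immediate: if $F(x,\cdot)$ is reversible then, by Lemma~\ref{reversLem}, so is $F^*(x,\cdot)$, so that $F^*(x,\eta)=F^*(x,-\eta)$ for all $\eta\in T^*_xM$ and $\lambda_F(x)=1$; plugging $\lambda_F(x)=1$ into the first identity gives (iii), and (i), (ii) follow at once. For (iii) $\Rightarrow$ (i) one uses only $\lambda_F(x)\geq 1$: on $S^*_xM$ the hypothesis reads $F^*(x,\eta)=\lambda_F(x)F^*(x,-\eta)\geq F^*(x,-\eta)$.

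The one implication with actual content is (i) $\Rightarrow$ (iv) (and, symmetrically, (ii) $\Rightarrow$ (iv)). Fix $\eta\in S^*_xM$, so $F^*(x,\eta)=1$; by (i), $t:=F^*(x,-\eta)\leq 1$, and $t>0$ since $F^*(x,\cdot)$ is a norm on $T^*_xM$. The subtlety is that $S^*_xM$ is \emph{not} invariant under $\eta\mapsto -\eta$, so one cannot simply re-apply (i) to $-\eta$; the remedy is to rescale using positive homogeneity of $F^*$. Indeed $F^*(x,-\eta/t)=1$, so $-\eta/t\in S^*_xM$, and applying (i) to $-\eta/t$ yields $F^*(x,-\eta/t)\geq F^*(x,\eta/t)$, i.e. $1\geq 1/t$, hence $t\geq 1$. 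Therefore $F^*(x,-\eta)=F^*(x,\eta)=1$ for every $\eta\in S^*_xM$, so $F^*(x,\cdot)$ is reversible by homogeneity, and Lemma~\ref{reversLem} gives $\lambda_F(x)=\lambda_{F^*}(x)=1$, which is (iv). The proof of (ii) $\Rightarrow$ (iv) is identical with the roles of $\eta$ and $-\eta$ interchanged: now $s:=F^*(x,-\eta)\geq 1$, and rescaling by $s$ forces $s\leq 1$.

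There is no genuine obstacle here; the only point requiring care is the asymmetry of $S^*_xM$, which is exactly what the normalization by the $F^*$-length repairs. As an alternative to the rescaling argument one can also deduce (iv) directly from (iii): that hypothesis says $F^*(x,-\eta)/F^*(x,\eta)=1/\lambda_F(x)$ on $S^*_xM$, hence on all of $T^*_xM\setminus\{0\}$ by homogeneity, so $\lambda_{F^*}(x)=1/\lambda_F(x)$; combining this with $\lambda_{F^*}(x)=\lambda_F(x)$ from Lemma~\ref{reversLem} gives $\lambda_F(x)^2=1$, hence $\lambda_F(x)=1$.
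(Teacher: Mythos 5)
Your proof is correct and uses essentially the same ideas as the paper: the key step in both is to repair the asymmetry of $S^*_xM$ by positive homogeneity (you normalize $-\eta$ by its $F^*$-length, the paper equivalently extends the inequality from $S^*_xM$ to all of $T^*_xM\setminus\{0\}$ and then substitutes $-\xi$), combined with $\lambda_{F^*}(x)=\lambda_F(x)$ from Lemma~\ref{reversLem}. Your ``alternative'' deduction of (iv) from (iii) via $\lambda_{F^*}(x)=1/\lambda_F(x)=\lambda_F(x)$ is in fact exactly the paper's argument for that implication; only your logical organization (a cycle through (iii) and (i) rather than proving each of (i)--(iii) $\Rightarrow$ (iv) directly) differs, and that is immaterial.
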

\begin{proof}
(i)$\Rightarrow$(iv) Note that for every $\xi\in T^*_xM\setminus\{0\}$, one has $\eta:=\frac{\xi}{F^*(x,\xi)}\in S_x^*M.$ Applying property (i) for this element, if follows that
$F^*(x,\xi)\geq F^*(x,-\xi)$. Since $\xi$ is arbitrary, we may choose $\xi:=-\xi$ in the latter relation, which yields  $F^*(x,\xi)= F^*(x,-\xi)$, i.e., $\lambda_{F^*}(x)=1$. Now Lemma \ref{reversLem} provides $\lambda_F(x)=1$.

(ii)$\Rightarrow$(iv)  Applying (ii) for $\eta:=\frac{\xi}{F^*(x,\xi)}\in S_x^*M$ with $\xi\in T^*_xM\setminus\{0\}$, if follows that
$F^*(x,-\xi)\geq F^*(x,\xi)$. Since $\xi$ is arbitrary, we may choose again $\xi:=-\xi$,   which yields  $F^*(x,\xi)= F^*(x,-\xi)$, i.e., $\lambda_{F^*}(x)=1$.

(iii)$\Rightarrow$(iv) By  the positive homogeneity of $F^*(x,\cdot)$ and Lemma \ref{reversLem} we have that
$$1=\lambda_F(x)\sup_{\eta  \in S^*_xM}\frac{F^*(x,-\eta)}{F^*(x,\eta)}=\lambda_F(x)\sup_{\eta  \in T^*_xM\setminus\{0\}}\frac{F^*(x,-\eta)}{F^*(x,\eta)}=\lambda_F(x)\lambda_{F^*}(x)=\lambda_F^2(x).$$

(iv)$\Rightarrow$(i)$\&$(ii)\&(iii) Trivial.
\end{proof}

\subsubsection{Integral of distortion}
 Given two equivalent measures $d\mathfrak{m}_i$, $i=1,2$ on a Finsler manifold $(M,F)$  (i.e.,  there exits a constant $C>0$ such that $
d\mathfrak{m}_1=C d\mathfrak{m}_2
$), it is easy to see that the $S$-curvatures of these measures coincide.  In the sequel,
we denote by $[d\mathfrak{m}_{BH}]$ the equivalence class of the Busemann-Hausdorff measure.

\begin{definition}
Given a measure $d\mathfrak{m}$ on $(M,F)$, the integral of distortion is
\[
\mathscr{L}_{\mathfrak{m}}(x):=\frac{1}{n}\ds\int_{S_xM}e^{-\tau(y)}d\nu_x(y).
\]
\end{definition}

\begin{lemma}\label{lemm1}
Let  $d\mathfrak{m}$ be a measure on $(M,F)$. Then
\[
d\mathfrak{m}\in [d\mathfrak{m}_{BH}]\Longleftrightarrow \mathscr{L}_{\mathfrak{m}}\equiv\text{\rm constant.}
\]
\end{lemma}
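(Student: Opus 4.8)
The plan is to reduce the equivalence to the single chart-free pointwise identity
\[
\mathscr{L}_{\mathfrak{m}_{BH}}(x)=\vol(\mathbb{B}^{n})\qquad\text{for every }x\in M.
\]
Indeed, writing $d\mathfrak{m}=\sigma\,dx^1\wedge\cdots\wedge dx^n$ in a local chart, one has $e^{-\tau_{\mathfrak m}(y)}=\sigma(x)/\sqrt{\det g_{ij}(x,y)}$ for $y\in T_xM\setminus\{0\}$, hence $e^{-\tau_{\mathfrak m}(y)}=\tfrac{\sigma(x)}{\sigma_{BH}(x)}\,e^{-\tau_{BH}(y)}$, where $\sigma_{BH}=\vol(\mathbb B^{n})/\vol(B_xM)$ is the density of $d\mathfrak m_{BH}$ and the ratio $\sigma/\sigma_{BH}$ is a well-defined positive function on $M$ (it is independent of the chart). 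Integrating over $S_xM$ gives $\mathscr{L}_{\mathfrak m}(x)=\tfrac{\sigma(x)}{\sigma_{BH}(x)}\,\mathscr{L}_{\mathfrak m_{BH}}(x)$, so once the displayed identity is available we obtain: $\mathscr{L}_{\mathfrak m}\equiv\text{const}$ $\iff$ $\sigma/\sigma_{BH}\equiv\text{const}$ $\iff$ $d\mathfrak m=C\,d\mathfrak m_{BH}$ for some $C>0$, i.e.\ $d\mathfrak m\in[d\mathfrak m_{BH}]$.

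The core of the proof is thus the identity $\vol(B_xM)=\tfrac1n\int_{S_xM}(\det g_{ij}(x,y))^{-1/2}\,d\nu_x(y)$, which I would establish by a polar change of variables on the vector space $T_xM\cong\mathbb{R}^n$. By positive homogeneity of $F(x,\cdot)$ the map $\Phi\colon(0,\infty)\times S_xM\to T_xM\setminus\{0\}$, $\Phi(r,u)=ru$, is a diffeomorphism, and the claim is that
\[
\Phi^{*}\big(dy^1\wedge\cdots\wedge dy^n\big)=\frac{r^{n-1}}{\sqrt{\det g_{ij}(x,u)}}\;dr\wedge d\nu_x(u).
\]
To check this, fix $u\in S_xM$ and abbreviate $h:=g_u$, i.e.\ the inner product $(g_{ij}(x,u))$ on $T_xM$. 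Euler's theorem gives $h(u,u)=F^2(x,u)=1$, and since $dF_u=h(u,\cdot)$ on $S_xM$ — because $\partial_{y^i}F^2=2g_{ij}y^j$, a consequence of Euler's theorem and the $0$-homogeneity of $g_{ij}(x,\cdot)$ — we get $T_uS_xM=u^{\perp_h}$. Thus $u$ together with any $h$-orthonormal basis of $T_uS_xM$ forms an $h$-orthonormal basis of $T_xM$; on such a basis the coordinate form $dy^1\wedge\cdots\wedge dy^n=(\det h)^{-1/2}\,dV_h$ evaluates to $(\det h)^{-1/2}$, while $d\nu_x(u)$, being the Riemannian volume form of $h|_{T_uS_xM}$, evaluates to $1$. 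Combining this with $d\Phi(\partial_r)=u$ and $d\Phi|_{T_uS_xM}=r\cdot\id$ yields the displayed pull-back formula, and integrating it over $\{0<r<1\}\times S_xM$ gives $\vol(B_xM)=\tfrac1n\int_{S_xM}(\det g_{ij}(x,y))^{-1/2}\,d\nu_x(y)$.

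Plugging $e^{-\tau_{BH}(y)}=\sigma_{BH}(x)(\det g_{ij}(x,y))^{-1/2}$ into the definition of $\mathscr{L}$ and using the identity just proved,
\[
\mathscr{L}_{\mathfrak m_{BH}}(x)=\frac{\sigma_{BH}(x)}{n}\int_{S_xM}\frac{d\nu_x(y)}{\sqrt{\det g_{ij}(x,y)}}=\sigma_{BH}(x)\,\vol(B_xM)=\vol(\mathbb B^{n}),
\]
which is exactly the sought identity, and the lemma then follows from the first paragraph. The step I expect to be the main obstacle is the Jacobian identity for $\Phi^{*}(dy^1\wedge\cdots\wedge dy^n)$: the delicate point is to verify that the radial vector $u$ at a point of the indicatrix is $g_u$-unit and $g_u$-orthogonal to $T_uS_xM$, since this is precisely what makes the coordinate volume form factor cleanly through $(\det g_u)^{-1/2}$ and through the metric $d\nu_x$ that $F$ induces on $S_xM$; everything else is bookkeeping. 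As a sanity check, in the Riemannian case $g_{ij}$ is independent of $y$, $S_xM$ is the round unit $g_x$-sphere of $d\nu_x$-volume $n\vol(\mathbb B^{n})$, and the formula reproduces $\vol(B_xM)=\vol(\mathbb B^{n})/\sqrt{\det g_{ij}(x)}$, as it must.
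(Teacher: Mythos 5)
Your proposal is correct and follows essentially the same route as the paper: both boil down to the pointwise identity $\mathscr{L}_{\mathfrak m}(x)=\vol(\mathbb{B}^{n})\,\sigma(x)/\sigma_{BH}(x)$ (equivalently $\mathscr{L}_{\mathfrak m_{BH}}\equiv\vol(\mathbb{B}^{n})$), from which the equivalence with $d\mathfrak m\in[d\mathfrak m_{BH}]$ is immediate. The only difference is organizational: the paper quotes the coordinate expression $d\nu_x=\sqrt{\det g_{ij}(x,y)}\sum_i(-1)^{i-1}y^i\,dy^1\wedge\ldots\wedge\widehat{dy^i}\wedge\ldots\wedge dy^n$ together with $\sigma_{BH}=\vol(\mathbb{S}^{n-1})/\vol(S_xM)$, whereas you re-derive the equivalent polar-coordinate identity $\vol(B_xM)=\frac1n\int_{S_xM}(\det g_{ij}(x,y))^{-1/2}d\nu_x(y)$ from Euler's theorem (the radial vector being the $g_u$-unit normal to the indicatrix), which is a correct, self-contained proof of the same fact.
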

\begin{proof}
	Given $x\in M$,
	let $(x^i)$ be a local coordinate system around $x$. If   $d\mathfrak{m}(x)=\sigma(x) dx^1\wedge\ldots \wedge dx^n$, one has
	\begin{eqnarray*}
	\mathscr{L}_{\mathfrak{m}}(x)&=&\frac{1}{n}\int_{S_xM}e^{-\tau(y)}d\nu_x(y)=\frac{1}{n}\int_{S_xM}\frac{\sigma(x)}{\sqrt{\det {g}_{ij}(x,y)}}d\nu_x(y)\\
	&=&\frac{1}{n}\int_{S_xM}\frac{\sigma(x)}{\sqrt{\det {g}_{ij}(x,y)}} \left(\sqrt{\det {g}_{ij}(x,y)}\sum_{i=1}^n(-1)^{i-1}  y^idy^1\wedge\ldots\wedge \widehat{dy^i}\wedge\ldots\wedge dy^n\right)\\
	&=&\frac{1}{n}\sigma(x)\vol(S_xM),
	\end{eqnarray*}
	where
	\[
	\vol(S_xM):=\int_{S_xM}\sum_{i=1}^n(-1)^{i-1}y^idy^1\wedge\ldots\wedge \widehat{dy^i}\wedge\ldots\wedge dy^n.
	\]
	Recall that $d\mathfrak{m}_{BH}(x)=\sigma_{BH}(x) dx^1\wedge\ldots \wedge dx^n$, where the density function is
	\[
	\sigma_{BH}(x)=\frac{\vol(\mathbb{B}^{n})}{\vol(B_xM)}=\frac{\vol(\mathbb{S}^{n-1})}{\vol(S_xM)}.
	\]
	The above computation yields that for some $C>0$,
$$
		\mathscr{L}_{\mathfrak{m}}\equiv C\Longleftrightarrow \sigma(x)=\frac{nC}{\vol(S_xM)}=\frac{nC}{\vol(\mathbb{S}^{n-1})}\sigma_{BH}(x),\  \forall x\in M
		\Longleftrightarrow d\mathfrak{m}=\frac{nC}{\vol(\mathbb{S}^{n-1})} d\mathfrak{m}_{BH},
$$
which concludes the proof.
\end{proof}

\begin{remark}\rm \label{remark-kell}
Note that $\vol(S_xM)$ depends on the choice of local coordinate system. Recall the 'natural/invariant' volume of $S_xM$ is given by
\[
\vol(x):=\ds\int_{S_xM}d\nu_x(y).
\]
\end{remark}

\begin{lemma}\label{lemm2}
Let $(M,F,d\mathfrak{m})$ be an $n$-dimensional forward complete Finsler manifold satisfying
$$
\mathbf{K}\leq 0,\ \mathbf{S}\leq 0\ {and}\  \mathfrak{i}_M=+\infty.
$$
 If there is some $x_0\in M$ with
\[
\mathscr{L}_{\mathfrak{m}}(x_0)=\inf_{x\in M}\mathscr{L}_{\mathfrak{m}}(x),\ \mathfrak{m}(B^+_{x_0}(r))=\mathscr{L}_{\mathfrak{m}}(x_0) r^n, \ \forall\,r>0,
\]
then $d\mathfrak{m}\in [ d\mathfrak{m}_{BH}]$, $\mathbf{K}=0$ and $\mathbf{S}=\mathbf{S}_{BH}=0$.
\end{lemma}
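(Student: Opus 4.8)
The plan is to bootstrap the two hypotheses at $x_0$ into the two global statements ``$\mathscr{L}_{\mathfrak{m}}$ is constant on $M$'' and ``$\mathfrak{m}(B^+_x(r))=\mathscr{L}_{\mathfrak{m}}(x_0)\,r^n$ for all $x\in M$ and $r>0$'', and then to read off $\mathbf{K}=0$ and $\mathbf{S}=0$ from the rigidity clause of the volume comparison (i) recalled in Section~\ref{Sec2} (i.e., Zhao--Shen \cite{ZS}).

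First I would note that $\mathfrak{i}_M=+\infty$ gives $\mathfrak{i}_x=+\infty$ for every $x\in M$, so comparison (i) is available for all $r>0$: the function $f_x(r):=\mathfrak{m}(B^+_x(r))/(\mathscr{L}_{\mathfrak{m}}(x)\,r^n)$ is non-decreasing and $f_x\ge 1$, whence $\mathfrak{m}(B^+_x(r))\ge\mathscr{L}_{\mathfrak{m}}(x)\,r^n$. On the other hand, the (asymmetric) triangle inequality $d_F(x_0,z)\le d_F(x_0,x)+d_F(x,z)$ yields the inclusion $B^+_x(r)\subseteq B^+_{x_0}(d_F(x_0,x)+r)$, so the hypothesis $\mathfrak{m}(B^+_{x_0}(R))=\mathscr{L}_{\mathfrak{m}}(x_0)\,R^n$ gives $\mathfrak{m}(B^+_x(r))\le\mathscr{L}_{\mathfrak{m}}(x_0)\,(d_F(x_0,x)+r)^n$. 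Dividing both estimates by $r^n$ and letting $r\to+\infty$ forces $\mathscr{L}_{\mathfrak{m}}(x)\le\mathscr{L}_{\mathfrak{m}}(x_0)$; since $\mathscr{L}_{\mathfrak{m}}(x_0)=\inf_M\mathscr{L}_{\mathfrak{m}}$, we conclude $\mathscr{L}_{\mathfrak{m}}\equiv\mathscr{L}_{\mathfrak{m}}(x_0)=:L$ on $M$. By Lemma~\ref{lemm1} this already gives $d\mathfrak{m}\in[d\mathfrak{m}_{BH}]$, and hence $\mathbf{S}=\mathbf{S}_{BH}$, since equivalent measures have the same $S$-curvature.

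With $\mathscr{L}_{\mathfrak{m}}\equiv L$ the two bounds above read $L\,r^n\le\mathfrak{m}(B^+_x(r))\le L\,(d_F(x_0,x)+r)^n$, so $f_x(r)\to 1$ as $r\to+\infty$; being non-decreasing and $\ge 1$, $f_x$ must be identically $1$, i.e., $\mathfrak{m}(B^+_x(r))=L\,r^n$ for every $x\in M$ and $r>0$. I would then apply the equality clause of comparison (i) at each fixed $x$: since $f_x(r_0)=1$ for arbitrarily large $r_0$ (and $r_0\le\mathfrak{i}_x=+\infty$ is automatic), it follows that $\mathbf{K}(\dot\gamma_y(t),\cdot)\equiv 0$ and $\mathbf{S}(\dot\gamma_y(t))\equiv 0$ for all $y\in S_xM$ and $t\ge 0$; evaluating at $t=0$ gives $\mathbf{K}(y,\cdot)\equiv 0$ and $\mathbf{S}(y)=0$ for every $y\in S_xM$. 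Letting $x$ range over $M$ and invoking the homogeneity of $\mathbf{K}$ and $\mathbf{S}$, we obtain $\mathbf{K}\equiv 0$ and $\mathbf{S}\equiv 0$ on $TM\setminus\{0\}$, and therefore also $\mathbf{S}_{BH}=\mathbf{S}=0$.

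The step I expect to be the main obstacle --- the only one that is not a direct reading of the comparison theorem --- is the globalization carried out in the second and third paragraphs: the rigidity in \cite{ZS} is anchored at the chosen center, so one must transport the equality $f_{x_0}\equiv 1$ to an arbitrary center $x$. This is exactly what the inclusion $B^+_x(r)\subseteq B^+_{x_0}(d_F(x_0,x)+r)$, the minimality of $\mathscr{L}_{\mathfrak{m}}(x_0)$, and the limit $r\to+\infty$ achieve; the remaining manipulations are routine.
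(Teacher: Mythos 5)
Your argument is correct and follows essentially the same route as the paper: the lower bound $\mathfrak{m}(B^+_x(r))\geq \mathscr{L}_{\mathfrak{m}}(x)r^n$ from the comparison (\ref{new2.7}), the inclusion $B^+_x(r)\subset B^+_{x_0}(r+d_F(x_0,x))$ combined with the hypothesis at $x_0$ and the infimum property, the limit $r\to+\infty$ to force $\mathscr{L}_{\mathfrak{m}}\equiv\mathscr{L}_{\mathfrak{m}}(x_0)$ and $\mathfrak{m}(B^+_x(r))=\mathscr{L}_{\mathfrak{m}}(x_0)r^n$ for every $x$, and then the equality (rigidity) clause plus Lemma \ref{lemm1}. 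The only difference is presentational: the paper packs the two globalization steps into a single chain of inequalities, whereas you separate the constancy of $\mathscr{L}_{\mathfrak{m}}$ from the identity $f_x\equiv 1$, which changes nothing of substance.
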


\begin{proof} Fix $x\in M$ arbitrarily.  According to (\ref{new2.7}),
 we have
$
\mathfrak{m}(B^+_{x}(r))\geq \mathscr{L}_{\mathfrak{m}}(x) r^n$ for every $r>0,
$
and
$
r\mapsto \frac{\mathfrak{m}(B^+_{x}(r))}{  r^n}$  is non-decreasing.
Thus, since $B^+_x(r)\subset B^+_{x_0}(r+d_F(x_0,x))$, we have
\begin{eqnarray*}
\mathscr{L}_{\mathfrak{m}}(x_0)&\leq& \mathscr{L}_{\mathfrak{m}}(x)\leq \frac{\mathfrak{m}(B^+_{x}(r))}{r^n}\\
&\leq &\underset{r\rightarrow +\infty}{\lim\sup}\frac{\mathfrak{m}(B^+_{x}(r))}{r^n}\leq \underset{r\rightarrow +\infty}{\lim\sup}\frac{\mathfrak{m}(B^+_{x_0}(r+d_F(x_0,x)))}{r^n}\\
&=&\underset{r\rightarrow +\infty}{\lim\sup}\left(\frac{\mathfrak{m}(B^+_{x_0}(r+d_F(x_0,x)))}{(r+d_F(x_0,x))^n} \frac{(r+d_F(x_0,x))^n}{r^n}  \right)\\
&=&\mathscr{L}_{\mathfrak{m}}(x_0),
\end{eqnarray*}
which implies that
\[
\mathscr{L}_{\mathfrak{m}}(x)=\mathscr{L}_{\mathfrak{m}}(x_0),\ \mathfrak{m}(B^+_{x}(r))=\mathscr{L}_{\mathfrak{m}}(x_0) r^n, \ \forall\,r>0.
\]
Therefore, by the equality case in the volume comparison principle it turns out that $\mathbf{K}\equiv0$ and $\mathbf{S}\equiv0$. Moreover, Lemma \ref{lemm1} implies that $d\mathfrak{m}\in [d\mathfrak{m}_{BH}] $ and hence $\mathbf{S}=\mathbf{S}_{BH}$.
\end{proof}

\begin{lemma}\label{lemm3}
Let $(M,F,d\mathfrak{m})$ be an $n$-dimensional forward complete Finsler manifold with
\begin{align*}
\mathbf{Ric}\geq 0,\ \mathbf{S}\geq0.
\end{align*}
 If there is some $x_0\in M$ with
\[
\mathscr{L}_{\mathfrak{m}}(x_0)=\sup_{x\in M}\mathscr{L}_{\mathfrak{m}}(x),\ \mathfrak{m}(B^+_{x_0}(r))=\mathscr{L}_{\mathfrak{m}}(x_0) r^n, \ \forall\,r>0,
\]
then
$d\mathfrak{m}\in [ d\mathfrak{m}_{BH}]$, $\mathbf{K}=0$ and $\mathbf{S}=\mathbf{S}_{BH}=0$.
\end{lemma}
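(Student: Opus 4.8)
The plan is to follow the scheme of the proof of Lemma \ref{lemm2} almost verbatim, but with the non-positive curvature volume comparison (\ref{new2.6})--(\ref{new2.7}) replaced by its non-negative Ricci counterpart (\ref{new2.8})--(\ref{new2.9}), and all monotonicities and inequalities reversed. Two preliminary observations make the transcription clean. First, in contrast to Lemma \ref{lemm2}, no hypothesis $\mathfrak{i}_M=+\infty$ is needed: under $\mathbf{Ric}\geq 0$ and $\mathbf{S}\geq 0$ the Bishop--Gromov-type estimate (\ref{new2.9}) is \emph{global}, so $r\mapsto \mathfrak{m}(B^+_x(r))/r^n$ is non-increasing on all of $(0,+\infty)$ and bounded above by $\mathscr{L}_{\mathfrak{m}}(x)$. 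Second, $\mathscr{L}_{\mathfrak{m}}(x)$ and $\mathfrak{m}(B^+_x(r))$ are finite for every $x$ and $r$, so the manipulations below are legitimate.

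First I would fix $x\in M$, set $a:=d_F(x,x_0)$, and record the ball inclusions coming from the triangle inequality for $d_F$: $B^+_{x_0}(r-a)\subset B^+_x(r)$ for $r>a$ and $B^+_x(r)\subset B^+_{x_0}(r+d_F(x_0,x))$. The crucial one is the first (the \emph{inner} inclusion): together with the hypothesis $\mathfrak{m}(B^+_{x_0}(s))=\mathscr{L}_{\mathfrak{m}}(x_0)s^n$ it gives $\mathfrak{m}(B^+_x(r))\geq \mathscr{L}_{\mathfrak{m}}(x_0)(r-a)^n$ for $r>a$, whence $\liminf_{r\to+\infty}\mathfrak{m}(B^+_x(r))/r^n\geq \mathscr{L}_{\mathfrak{m}}(x_0)$. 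Combining this with the non-increasing monotonicity of $r\mapsto \mathfrak{m}(B^+_x(r))/r^n$, the bound $f(r)\leq 1$ from (\ref{new2.9}) (note $f(r)=\mathfrak{m}(B^+_x(r))/(\mathscr{L}_{\mathfrak{m}}(x)r^n)$), and the assumption $\mathscr{L}_{\mathfrak{m}}(x_0)=\sup_{y\in M}\mathscr{L}_{\mathfrak{m}}(y)$, I expect, for every fixed $r>0$, the chain
\[
\mathscr{L}_{\mathfrak{m}}(x_0)\ \geq\ \mathscr{L}_{\mathfrak{m}}(x)\ \geq\ \frac{\mathfrak{m}(B^+_x(r))}{r^n}\ \geq\ \liminf_{s\to+\infty}\frac{\mathfrak{m}(B^+_x(s))}{s^n}\ \geq\ \mathscr{L}_{\mathfrak{m}}(x_0),
\]
which forces equality throughout. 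Hence $\mathscr{L}_{\mathfrak{m}}(x)=\mathscr{L}_{\mathfrak{m}}(x_0)$ and $\mathfrak{m}(B^+_x(r))=\mathscr{L}_{\mathfrak{m}}(x_0)r^n$ for all $r>0$; as $x\in M$ is arbitrary, $\mathscr{L}_{\mathfrak{m}}$ is constant, so Lemma \ref{lemm1} yields $d\mathfrak{m}\in[d\mathfrak{m}_{BH}]$.

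Finally I would read off the curvature. For each $x$, since $\mathfrak{i}_x>0$ we may pick $r_0\in(0,\mathfrak{i}_x]$ with $\mathfrak{m}(B^+_x(r_0))=\mathscr{L}_{\mathfrak{m}}(x)r_0^n$, i.e. equality in (\ref{new2.9}); the rigidity clause there then gives $\mathbf{K}(\dot{\gamma}_y(t),\cdot)\equiv 0$ and $\mathbf{S}(\dot{\gamma}_y(t))\equiv 0$ for all $y\in S_xM$ and $0\leq t\leq r_0$. Evaluating at $t=0$ gives $\mathbf{K}(y,\cdot)\equiv 0$ and $\mathbf{S}(y)=0$ for all $y\in S_xM$, and letting $x$ range over $M$ and using the homogeneity of $\mathbf{K}$ in the flagpole and the positive homogeneity of $\mathbf{S}$, one concludes $\mathbf{K}\equiv 0$ and $\mathbf{S}\equiv 0$ on $TM\setminus\{0\}$. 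Since equivalent measures have the same $S$-curvature and $d\mathfrak{m}\in[d\mathfrak{m}_{BH}]$, also $\mathbf{S}_{BH}=\mathbf{S}=0$. The argument is largely routine given Lemma \ref{lemm2}; the one genuinely new point — and the step I would be most careful with — is the \emph{lower} estimate for $\mathfrak{m}(B^+_x(r))$: in Lemma \ref{lemm2} the non-decreasing ratio together with the outer inclusion already squeezes everything, whereas under $\mathbf{Ric}\geq 0$ the ratio is non-increasing, so one must instead use the inner inclusion $B^+_{x_0}(r-a)\subset B^+_x(r)$ to upgrade the inequality to an exact identity. The passage ``vanishing along all forward geodesics from every point $\Rightarrow\mathbf{K}\equiv\mathbf{S}\equiv 0$'' is harmless, since only the value at $t=0$ is used and every tangent direction occurs as $\dot{\gamma}_y(0)=y$ at some base point.
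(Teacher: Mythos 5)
Your proposal is correct and follows essentially the same route as the paper: the upper bound and non-increasing monotonicity of $r\mapsto\mathfrak{m}(B^+_x(r))/r^n$ from (\ref{new2.9}), the inner inclusion $B^+_{x_0}(r-d_F(x,x_0))\subset B^+_x(r)$, and the hypothesis $\mathscr{L}_{\mathfrak{m}}(x_0)=\sup_M\mathscr{L}_{\mathfrak{m}}$ squeeze the chain to equality, after which Lemma \ref{lemm1} and the rigidity clause of the volume comparison give $d\mathfrak{m}\in[d\mathfrak{m}_{BH}]$, $\mathbf{K}=0$ and $\mathbf{S}=\mathbf{S}_{BH}=0$, exactly as in the paper (which uses $\limsup$ where you use $\liminf$, an immaterial difference since the ratio is monotone). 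Your side remarks — that no injectivity-radius hypothesis is needed because (\ref{new2.9}) is global, and that only the value at $t=0$ of the rigidity conclusion is used — are accurate and consistent with the paper's argument.
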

\begin{proof} Fix $x\in M$ arbitrarily. Relation  (\ref{new2.9}) yields that
$
\mathfrak{m}(B^+_{x}(r))\leq \mathscr{L}_{\mathfrak{m}}(x) r^n$ for every $r>0,
$ and the function
$
r\mapsto \frac{\mathfrak{m}(B^+_{x}(r))}{  r^n}$  is non-increasing.
Since $B^+_x(r)\supset B^+_{x_0}(r-d_F(x,x_0))$ for sufficiently large $r>0$, it follows that
\begin{eqnarray*}
\mathscr{L}_{\mathfrak{m}}(x_0)&\geq& \mathscr{L}_{\mathfrak{m}}(x)\geq \frac{\mathfrak{m}(B^+_{x}(r))}{r^n}\\
&\geq &\underset{r\rightarrow +\infty}{\lim\sup}\frac{\mathfrak{m}(B^+_{x}(r))}{r^n}\geq \underset{r\rightarrow +\infty}{\lim\sup}\frac{\mathfrak{m}(B^+_{x_0}(r-d_F(x,x_0)))}{r^n}\\
&=&\underset{r\rightarrow +\infty}{\lim\sup}\left(\frac{\mathfrak{m}(B^+_{x_0}(r-d_F(x,x_0)))}{(r-d_F(x,x_0))^n} \frac{(r-d_F(x,x_0))^n}{r^n}  \right)\\
&=&\mathscr{L}_{\mathfrak{m}}(x_0).
\end{eqnarray*}
A similar argument as in the proof of Lemma \ref{lemm2} yields the required conclusion.
\end{proof}

\begin{remark}\rm The latter result above still holds if the assumptions
\[
\mathbf{Ric}\geq 0,\ \mathbf{S}\geq0,\ \mathfrak{m}(B^+_{x_0}(r))=\mathscr{L}_{\mathfrak{m}}(x_0) r^n,\ \forall\, r>0,\tag{C1}\label{Condition 1}
\]
 are replaced by
 \[
 \mathbf{Ric}_N\geq 0,\ \mathfrak{m}(B^+_{x_0}(r))=\mathscr{L}_{\mathfrak{m}}(x_0) r^N,\ \forall\, r>0,\tag{C2}\label{Condition 2}
 \]
 where $N\in[n,\infty]$.
  Indeed, by (\ref{new2.5}) we necessarily have $N=n$.
Furthermore, the definition of $\mathbf{Ric}_n$ (see e.g. Ohta and Sturm \cite{Ot} or Ohta \cite{O}) implies $ \mathbf{Ric}_n= \mathbf{Ric}$ and $\mathbf{S}\equiv0$. 
\end{remark}

\section{Heisenberg-Pauli-Weyl principle: Case (I) in (\ref{1.1})}\label{Sec4}

\subsection{Non-positively curved case (proof of Theorems \ref{fistCan}\&\ref{thirdCan} when $p=2$ and $q=0$)}
Let $(M,F,d \mathfrak{m})$ be an $n$-dimensional Cartan-Hadamard manifold (i.e., forward  complete simply connected Finsler manifold with non-positive flag curvature).
Given any $x\in M$, it is well-known that  there is no conjugate point to $x$  in $M$; therefore, each geodesic from $x$ is minimal and $\mathfrak{i}_x=+\infty$ for every $x\in M$.

\begin{proposition}\label{strongfirsthe-prop}
Let $(M,F, d\mathfrak{m})$ be an $n$-dimensional Cartan-Hadamard manifold with ${\bf S}\leq 0$ and let $J^{\rm max}_{2,0}$ be defined by {\rm (\ref{1.2})}. Let $x_0\in M$  be arbitrarily fixed.
Then we have the following:

\begin{itemize}
	\item[{\rm (i)}] $({ \mathbf{ J}}^{\rm max}_{2,0,x_0})$ holds, i.e.,  $J^{\rm max}_{2,0}(x_0,u)\geq \frac{n^2}{4}$ for any  $u\in C^\infty_0(M)\setminus \{0\}.$
	\item[{\rm (ii)}] $\frac{n^2}4$ is sharp in $({\mathbf{J}}^{\rm max}_{2,0,x_0})$ whenever $F^*(\mathcal {L}(\dot{\gamma}_y(t)))\geq F^*(-\mathcal {L}(\dot{\gamma}_y(t)))$ for any $y\in S_{x_0}M$ and $t\geq 0.$
	\item[{\rm (iii)}]  The following statements are equivalent:
	\begin{itemize}
		\item[{\rm (a)}] $\frac{n^2}{4}$ is achieved by an extremal in $({\mathbf{J}}^{\rm max}_{2,0,x_0});$
		\item[{\rm (b)}] $F^*(\mathcal {L}(\dot{\gamma}_y(t)))\geq F^*(-\mathcal {L}(\dot{\gamma}_y(t)))$, $\mathbf{K}(\dot{\gamma}_y(t),\cdot)\equiv0$ and $\mathbf{S}(\dot{\gamma}_y(t))\equiv0$ for all $ y\in S_{x_0}M$ and $t\geq 0$.
	\end{itemize}
\end{itemize}
\end{proposition}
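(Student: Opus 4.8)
The plan is to transplant the classical proof of the Heisenberg--Pauli--Weyl inequality, replacing the Euclidean computation $\operatorname{div}(x)=n$ by the Laplacian comparison for the squared distance function. Fix $x_0\in M$. Since $(M,F)$ is Cartan--Hadamard, $\rho_{x_0}$ is smooth on $M\setminus\{x_0\}$, $\mathfrak{i}_{x_0}=+\infty$, every geodesic ray $\gamma_y$ ($y\in S_{x_0}M$) is globally minimizing, and $\nabla\rho_{x_0}|_{\gamma_y(t)}=\dot\gamma_y(t)$; hence $\mathcal{L}(\dot\gamma_y(t))=d\rho_{x_0}$ along the ray and, by $(\ref{eikonal})$, $F^{*}(d\rho_{x_0})=F(\nabla\rho_{x_0})=1$. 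From $\mathbf{K}\le0$, $\mathbf{S}\le0$ and $(\ref{new2.6})$ one gets $\Delta\rho_{x_0}\ge\frac{n-1}{\rho_{x_0}}$ on $M\setminus\{x_0\}$, hence $\Delta(\rho_{x_0}^2)=2\rho_{x_0}\Delta\rho_{x_0}+2F^2(\nabla\rho_{x_0})=2\rho_{x_0}\Delta\rho_{x_0}+2\ge2n$ there; integrating by parts on $M\setminus B^+_{x_0}(\varepsilon)$ and letting $\varepsilon\to0$ (the boundary term being $O(\varepsilon^n)$) promotes this to the distributional statement $\int_M w\,\Delta(\rho_{x_0}^2)\,d\mathfrak{m}\ge2n\int_M w\,d\mathfrak{m}$ for all $0\le w\in C_0^\infty(M)$.

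For (i), apply the last inequality with $w=u^2$, $u\in C_0^\infty(M)\setminus\{0\}$: by $(\ref{new2.4})$,
\[
2n\int_M u^2\,d\mathfrak{m}\ \le\ -\int_M\langle d(u^2),\nabla(\rho_{x_0}^2)\rangle\,d\mathfrak{m}\ =\ -4\int_M u\,\rho_{x_0}\,du(\nabla\rho_{x_0})\,d\mathfrak{m}.
\]
Since $F(\nabla\rho_{x_0})=1$ one has $du(\nabla\rho_{x_0})\le F^{*}(du)$ and $-du(\nabla\rho_{x_0})=(-du)(\nabla\rho_{x_0})\le F^{*}(-du)$, so $|du(\nabla\rho_{x_0})|\le\sqrt{\max\{F^{*2}(\pm du)\}}$; inserting this and using Cauchy--Schwarz yields $n\int_M u^2\,d\mathfrak{m}\le2\big(\int_M\rho_{x_0}^2u^2\,d\mathfrak{m}\big)^{1/2}\big(\int_M\max\{F^{*2}(\pm du)\}\,d\mathfrak{m}\big)^{1/2}$, and squaring is $(\mathbf{J}^{\rm max}_{2,0,x_0})$.

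For (ii) and for (b)$\Rightarrow$(a) in (iii) the competitors are the truncated Gaussians $u_C=\phi(\rho_{x_0})\,e^{-C\rho_{x_0}^2}\in C_0^\infty(M)$, where $\phi$ is a fixed smooth cut-off equal to $1$ near $0$, and $C\to\infty$. Since $du_C=(\phi'(\rho_{x_0})-2C\rho_{x_0}\phi(\rho_{x_0}))\,e^{-C\rho_{x_0}^2}\,d\rho_{x_0}$ is a nonpositive multiple of $d\rho_{x_0}$ on $\operatorname{supp}u_C$, the hypothesis $F^{*}(\mathcal{L}(\dot\gamma_y(t)))\ge F^{*}(-\mathcal{L}(\dot\gamma_y(t)))$ (equivalently $F^{*}(-d\rho_{x_0})\le1$ along the rays) gives
\[
\int_M\max\{F^{*2}(\pm du_C)\}\,d\mathfrak{m}=\int_M\big(\phi'(\rho_{x_0})-2C\rho_{x_0}\phi(\rho_{x_0})\big)^2e^{-2C\rho_{x_0}^2}\,d\mathfrak{m}.
\]
As $C\to\infty$ this integral, together with $\int_M\rho_{x_0}^2u_C^2\,d\mathfrak{m}$ and $\int_M u_C^2\,d\mathfrak{m}$, is governed by its bulk near $x_0$, the part supported on $\operatorname{supp}\phi'$ being exponentially small in $C$; writing $d\mathfrak{m}=\hat\sigma_{x_0}(r,y)\,dr\wedge d\nu_{x_0}(y)$, rescaling $\rho_{x_0}=s/\sqrt{2C}$, and invoking $(\ref{new2.5})$, the monotonicity of $r\mapsto\hat\sigma_{x_0}(r,y)/r^{n-1}$ from $(\ref{new2.6})$ and dominated convergence, one finds $J^{\rm max}_{2,0}(x_0,u_C)\to\frac{n^2}{4}$; with (i) this proves sharpness. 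If moreover the curvature conditions in (b) hold, then $\Delta r\equiv\frac{n-1}{r}$ along the rays and, by $(\ref{new2.5})$, $\hat\sigma_{x_0}(r,y)=e^{-\tau(y)}r^{n-1}$ for all $r>0$; the (now integrable) Gaussian $e^{-C\rho_{x_0}^2}$ then gives, by the same computation carried out exactly, $J^{\rm max}_{2,0}(x_0,e^{-C\rho_{x_0}^2})=\frac{n^2}{4}$, so it is an extremal once $(\mathbf{J}^{\rm max}_{2,0,x_0})$ has been extended from $C_0^\infty(M)$ by density.

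The crux is (a)$\Rightarrow$(b). Suppose $\frac{n^2}{4}$ is attained by some $u\not\equiv0$; replacing $u$ by $|u|$ (admissible, and leaving $J^{\rm max}_{2,0}(x_0,\cdot)$ unchanged since $\max\{F^{*2}(\pm d|u|)\}=\max\{F^{*2}(\pm du)\}$ a.e.) we may take $u\ge0$, and then every inequality of step (i) is an equality. Equality in the pointwise bound reads $-u\,du(\nabla\rho_{x_0})=u\sqrt{\max\{F^{*2}(\pm du)\}}$ a.e., which, by the strict convexity of $F$ (the equality case of $\eta(v)\le F^{*}(\eta)F(v)$), forces $-du=F^{*}(-du)\,d\rho_{x_0}$ and $F^{*}(-d\rho_{x_0})\le1$ on $\{u>0\}$. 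Thus $du$ is proportional to $d\rho_{x_0}$, so $u$ is constant on the (connected) level sets of $\rho_{x_0}$, i.e. $u=v(\rho_{x_0})$; combined with the Cauchy--Schwarz equality $\sqrt{\max\{F^{*2}(\pm du)\}}=\lambda\,\rho_{x_0}u$ (constant $\lambda>0$) this yields $v'=-\lambda\rho v$, whose only solution with finite weighted integrals is $v(\rho)=v(0)\,e^{-C\rho^2}$, $C=\lambda/2>0$. Hence $u$ is a positive multiple of $e^{-C\rho_{x_0}^2}$ with full support, so $\Delta(\rho_{x_0}^2)=2n$, i.e. $\Delta\rho_{x_0}=\frac{n-1}{\rho_{x_0}}$, holds everywhere on $M\setminus\{x_0\}$; integrating $\frac{\partial}{\partial r}\log\hat\sigma_{x_0}(r,y)=\frac{n-1}{r}$ and using $(\ref{new2.5})$ gives $\hat\sigma_{x_0}(r,y)=e^{-\tau(y)}r^{n-1}$, hence $\mathfrak{m}(B^+_{x_0}(r))=\mathscr{L}_{\mathfrak{m}}(x_0)r^n$ for all $r>0$, whence the equality case of $(\ref{new2.7})$ delivers $\mathbf{K}(\dot\gamma_y(t),\cdot)\equiv0$ and $\mathbf{S}(\dot\gamma_y(t))\equiv0$ for all $y\in S_{x_0}M$, $t\ge0$; together with $F^{*}(-d\rho_{x_0})\le1=F^{*}(d\rho_{x_0})$ along the rays, this is exactly (b). The main obstacle is this equality analysis — showing the extremal is radial with Gaussian profile via the equality cases of the anisotropic Cauchy--Schwarz and Legendre inequalities, and discarding the non-integrable solution branch — together with the (routine but careful) passage between the pointwise/distributional Laplacian identity and the exact volume growth.
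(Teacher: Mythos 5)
Your proposal is correct and follows essentially the same route as the paper: part (i) via the Laplacian comparison $\Delta\rho_{x_0}\ge\frac{n-1}{\rho_{x_0}}$, the dual pairing bound $|du(\nabla\rho_{x_0})|\le\max\{F^*(\pm du)\}$ and Cauchy--Schwarz; part (ii) by localizing at $x_0$ where the density behaves like $e^{-\tau(y)}r^{n-1}$ and testing with (truncated) Gaussians; part (iii) by the equality-case analysis forcing a radial Gaussian profile, hence $\rho_{x_0}\Delta\rho_{x_0}=n-1$, exact volume growth, and the rigidity case of the volume comparison, plus $F^*(-d\rho_{x_0})\le 1$ along rays. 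The only deviations are cosmetic: you send $C\to\infty$ in the Gaussian family instead of the paper's rescaling-into-a-small-ball contradiction argument, and your replacement of $u$ by $|u|$ (legitimate, since $J^{\rm max}_{2,0}$ is invariant under this) neatly bypasses the paper's two-case sign analysis, while your choice of $+e^{-C\rho_{x_0}^2}$ versus the paper's $-e^{-\rho_{x_0}^2}$ in (b)$\Rightarrow$(a) is immaterial for the max-functional.
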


\begin{proof} (i) Let us fix $u\in C_0^\infty(M)\setminus\{0\}$ arbitrarily.  Relation (\ref{new2.6}) together with the divergence theorem (\ref{new2.4}) yields
\begin{eqnarray}\label{3.00}
\nonumber\left( 2n\ds\int_M  u^2 d\mathfrak{m}  \right)^2&\leq&\left( 2\ds\int_M (1+\rho_{x_0}\Delta \rho_{x_0})u^2d\mathfrak{m} \right)^2\\
&=&\left( \ds\int_Mu^2\Delta\rho^2_{x_0}d\mathfrak{m} \right)^2=16\left(\ds\int_M u\rho_{x_0}\langle du,\nabla\rho_{x_0}    \rangle  d\mathfrak{m}\right)^2.
\end{eqnarray}
Set
\begin{align*}\left\{
\begin{array}{lll}
M_-:=\{x\in M:\, \langle du,\nabla\rho_{x_0}\rangle(x)<0\},\\
\\
M_+:=\{x\in M:\, \langle du,\nabla\rho_{x_0}\rangle(x)>0\},\tag{3.2}\label{3.0*}\\
\\
M_0:=\{x\in M:\, \langle du,\nabla\rho_{x_0}\rangle(x)=0\}.
\end{array}
\right.
\end{align*}
By the definition of the dual Finsler metric $F^*$, the eikoinal relation (\ref{eikonal}) and H\"older inequality we have
\begin{align*}
 \left|\ds\int_M u\rho_{x_0}\langle du,\nabla\rho_{x_0}    \rangle  d\mathfrak{m}\right| \leq &\ds\int_{M}\left|u\rho_{x_0}\langle du,\nabla\rho_{x_0}    \rangle\right|  d\mathfrak{m}\\
=&\ds\int_{M_-}|u|\rho_{x_0}\langle d(-u) ,\nabla\rho_{x_0}   \rangle  d\mathfrak{m}+\ds\int_{M_+}|u|\rho_{x_0}\langle du,\nabla\rho_{x_0}    \rangle   d\mathfrak{m}\tag{3.3}\label{3.6}\\
\leq &\ds\int_{M_-}|u|\rho_{x_0} F^*(-du)d\mathfrak{m}+\ds\int_{M_+}|u|\rho_{x_0} F^*(du) d\mathfrak{m}\tag{3.4}\label{3.7}\\
\leq &\ds\int_{M}|u|\rho_{x_0}\max\{F^*(\pm du)\}d\mathfrak{m}\tag{3.5}\label{new4.5**}\\
\leq &\left(\ds\int_{M}u^2\rho^2_{x_0}d\mathfrak{m} \right)^{\frac12}\left(\ds\int_{M}\max\{ F^{*2}(\pm du)\}d\mathfrak{m}\right)^\frac12, \tag{3.6}\label{3.5**}
\end{align*}
which together with (\ref{3.00}) implies that $J^{\rm max}_{2,0}(x_0,u)\geq \frac{n^2}{4}$.

\medskip

(ii) By assumption, we have $F^*(\mathcal {L}(\dot{\gamma}_y(t)))\geq F^*(-\mathcal {L}(\dot{\gamma}_y(t)))$ for any $y\in S_{x_0}M$ and $t\geq 0$. Set $$C_{HPW}:=\inf_{u\in C^\infty_0(M)\setminus\{0\}}J^{\rm max}_{2,0}(x_0,u),$$ i.e.,
for any $u\in C^\infty_0(M)$,
\[
\left(\ds\int_M\max \left\{F^{*2}(\pm du) \right\}d\mathfrak{m}\right)\left( \ds\int_M \rho^2_{x_0}  u^2 d\mathfrak{m} \right)\geq C_{HPW} \left(\ds\int_M  u^2 d\mathfrak{m}\right)^2.\tag{3.7}\label{3.500}
\]
By (i), one has $C_{HPW}\geq n^2/4$. Assume  by contradiction that $C_{HPW}>n^2/4$.
Let us choose a small $\delta>0$, consider the forward   ball $B^+_{x_0}(\delta)$ and let $(r,y)$ be the polar coordinate system around $x_0$. According to (\ref{new2.5}), there exists $\varepsilon=\varepsilon(\delta)>0$ such that $\lim_{\delta\rightarrow 0^+}\varepsilon(\delta)=0$ and
\begin{align*}
(1-\varepsilon(\delta))\cdot e^{-\tau(y)}  r^{n-1}dr\wedge d\nu_{x_0}(y)\leq d\mathfrak{m}(r,y)\leq (1+\varepsilon(\delta))\cdot e^{-\tau(y)}  r^{n-1}dr\wedge d\nu_{x_0}(y),
\end{align*}
for any  $(r,y)\in  B^+_{x_0}(\delta)\backslash\{x_0\}$.
For any $f\in C^\infty_0(B^+_{x_0}(\delta))$, inequality (\ref{3.500}) yields
\begin{align*}
&\left(\ds\int_{S_{x_0}M}e^{-\tau(y)}d\nu_{x_0}(y)\ds\int_0^\delta\max\left\{ F^{*2}(\pm df)\right\} r^{n-1}dr\right) \left( \ds\int_{S_{x_0}M}e^{-\tau(y)}d\nu_{x_0}(y)\int^\delta_0 f^2r^{n+1}dr \right)\\
& \ \ \ \ \ \ \ \ \ \ \ \ \ \ \ \ \geq C'_{HPW}\left( \ds\int_{S_{x_0}M}e^{-\tau(y)}d\nu_{x_0}(y)\int^\delta_0 f^2r^{n-1}dr \right)^2,
\end{align*}
where
\[
C'_{HPW}=C_{HPW}\left( \frac{1-\varepsilon(\delta)}{1+\varepsilon(\delta)} \right)^2>\frac{n^2}4
\]
for sufficiently small $\delta>0.$
For any $u\in C_0^\infty(M)$, choose an enough large $R>0$ such that
\[
f(r,y):=u\left(  {R} r,y \right)\in C^\infty_0(B^+_{x_0}(\delta))\cap C_0(B^+_{x_0}(\delta)).
\]
Then the above inequality reduces to
\begin{align*}
&\left(\ds\int_{S_{x_0}M}e^{-\tau(y)}d\nu_{x_0}(y)\ds\int_0^\infty \max\{F^{*2}(\pm du)\}r^{n-1}dr \right)\left( \ds\int_{S_{x_0}M}e^{-\tau(y)}d\nu_{x_0}(y)\ds\int^\infty_0 u^2r^{n+1}dr \right)\\
& \ \ \ \ \ \ \ \ \ \ \ \ \ \ \ \ \geq  C'_{HPW}\left( \ds\int_{S_{x_0}M}e^{-\tau(y)}d\nu_{x_0}(y)\ds\int^\infty_0 u^2r^{n-1}dr \right)^2.
\end{align*}
We now consider the test-function $u=e^{-r^2}$ which can be approximated by functions in $C^\infty_0(M)$. Recall that $r=\rho_{x_0}(r,y)$ and hence,
\[
dr=d\rho_{x_0}=\mathcal {L}(\nabla \rho_{x_0} )=\mathcal {L}\left((\exp_{x_0})_{*ry}y  \right)=\mathcal {L}\left(\dot{\gamma}_y(r)\right).
\]
Thus, for any $r>0$ and $y\in S_{x_0}M$, we have
\[
F^*(\mathcal {L}(\dot{\gamma}_y(r)))\geq F^*(-\mathcal {L}(\dot{\gamma}_y(r))) \Longleftrightarrow F^*\left(dr|_{(r,y)} \right)\geq F^*\left(-dr|_{(r,y)}\right),\tag{3.8}\label{new4.7}
\]
which implies $\max\{F^{*2}(\pm du)\}=4r^2e^{-2r^2}$. Hence, a direct calculation yields
\begin{align*}
\frac{n^2}{4}
\geq C'_{HPW}>\frac{n^2}4,
\end{align*}
which is a contradiction; accordingly, $C_{HPW}=n^2/4$.

(iii)  (a)$\Rightarrow$(b).
If there exists some extremal $u\in C^\infty(M)\backslash\{0\}$ in $({\mathbf{J}}^{\rm max}_{2,0,x_0})$ with $\ds\int_M u^2 d\mathfrak{m}<+\infty$, then (\ref{3.6}) implies that
\begin{itemize}
	\item[($\tilde a$)]  either $u\leq  0$ on $M_-$ and $u\geq 0$ on $M_+$;
	\item[($\tilde b$)]  or $u\geq  0$ on $M_-$ and $u\leq 0$ on $M_+$.
\end{itemize}
%
%
%
Moreover, the equality in (\ref{new4.5**}) implies that $u$ is constant on $M_0$ whenever $\mathfrak{m}(M_0)\neq 0$.
Let $(r,y)$ be the polar coordinate system around $x_0$. In particular, the equality in (\ref{3.7}) implies that $u=u(\rho_{x_0})=u(r)$. Thus, it follows that
$\frac{\partial u}{\partial r}< 0$ on $M_-$ and $\frac{\partial u}{\partial r}> 0$ on $M_+$, respectively.

In the case ($\tilde a$), the latter relations show that we necessarily have $\ds\int_M u^2d\mathfrak{m}=\infty$, a contradiction.
In the case ($\tilde b$), it turns out that
 $u$ is either nonpositive or nonnegative.
By the equality in (\ref{3.7}) and the H\"older inequality (\ref{3.5**}), we have $u\frac{\partial u}{\partial r}\leq 0$ and $\kappa r|u|=\left| \frac{\partial u}{\partial r}  \right|$ on $(0,\infty)$ for some  $\kappa>0$.
By these equations it follows that  $u=C e^{-\frac{\kappa}{2}r^2}$, where $C\in \mathbb{R}\backslash\{0\}$ and $\kappa>0$.

For convenience we may assume that $C>0$ and $\kappa=2$ (the case $C<0$ treats similarly). Since $\frac{\partial u}{\partial r}\leq 0$, it turns out that $\mathfrak{m}(M_+)= 0$ and  (\ref{3.7}) and (\ref{new4.5**}) yields $F^*(-du)\geq F^*(du)$.
In view of (\ref{new4.7}), we get $F^*(\mathcal {L}(\dot{\gamma}_y(t)))\geq F^*(-\mathcal {L}(\dot{\gamma}_y(t)))$ for every $t>0$ and $y\in S_{x_0}M$.
  Since $u>0$,  the equality in (\ref{3.00}) yields $\rho_{x_0}\Delta \rho_{x_0}=n-1$, which implies the equality in (\ref{new2.7}). Thus, the volume comparison principle then yields  $\mathbf{K}(\dot{\gamma}_y(t),\cdot)\equiv0$ and $\mathbf{S}(\dot{\gamma}_y(t))\equiv0$ for every $ y\in S_{x_0}M$ and $t\geq 0$.

\medskip

 (b)$\Rightarrow$(a). Since $\mathbf{K}(\dot{\gamma}_y(t),\cdot)\equiv0$ and $\mathbf{S}(\dot{\gamma}_y(t))\equiv0$ for every $ y\in S_{x_0}M$ and $t\geq 0$, we have the representation
  \[
  d\mathfrak{m}(r,y)=e^{-\tau(y)}r^{n-1}dr\wedge d\nu_{x_0}(y),\ 0<r<+\infty,\ y\in S_{x_0}M.
  \]
  Consider $u:=-e^{-\rho^2_{x_0}}=-e^{-r^2}$. Again  (\ref{new4.7}) yields
  $\max\{F^{*2}(\pm du)\}=4r^2 e^{-2r^2}.$
  Thus, a direct calculation furnishes
  \begin{align*}
&\ds\int_M \max\{F^{*2}(\pm du)\}d\mathfrak{m}=4n\mathscr{L}_{\mathfrak{m}}(x_0) \ds\int_0^\infty   e^{-2r^2}r^{n+1}dr=4\ds\int_M \rho^2_{x_0} u^2d\mathfrak{m},\\
&\ds\int_Mu^2d\mathfrak{m}=n\mathscr{L}_{\mathfrak{m}}(x_0)\ds\int_0^\infty   e^{-2r^2}r^{n-1}dr,
\end{align*}
which implies equality in $({\mathbf{J}}^{\rm max}_{2,0,x_0})$ for $u=-e^{-\rho^2_{x_0}}$, thus  (a) holds.
\end{proof}

In the case of $p=2,q=0$, Theorem \ref{fistCan} directly  follows from Proposition \ref{strongfirsthe-prop} and the following result.

\begin{proposition}\label{seoncdimpr-p}
Let $(M,F, d\mathfrak{m})$ be an $n$-dimensional Cartan-Hadamard manifold with
$ \mathbf{S} \leq 0.
$
If there exists a point $x_0\in M$ such that
\[
\lambda_F(x_0)= \lambda_F(M),\ \mathscr{L}_{\mathfrak{m}}(x_0)=\inf_{x\in M}\mathscr{L}_{\mathfrak{m}}(x),
\]
then the following statements are equivalent:

\begin{itemize}
	\item[{\rm (a)}] $\frac{n^2}{4}$ is achieved by an extremal in $({\mathbf{J}}^{\rm max}_{2,0,x_0});$
	\item[{\rm (b)}] $\frac{n^2}{4}$ is achieved by an extremal in $({\mathbf{J}}^{\rm max}_{2,0,x})$ for every $x\in M;$
	\item[{\rm (c)}]  $(M,F,d\mathfrak{m})$ is reversible, $d\mathfrak{m}\in [d\mathfrak{m}_{BH}]$, ${\bf K}= 0$ and $\mathbf{S}=\mathbf{S}_{BH}=0$.
\end{itemize}
\end{proposition}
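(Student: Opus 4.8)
The plan is to establish the cycle of implications (a)$\Rightarrow$(c)$\Rightarrow$(b)$\Rightarrow$(a); since (b) trivially specializes to (a) by taking $x=x_0$, the work is concentrated in the first two implications, and Proposition \ref{strongfirsthe-prop} will carry most of the local analysis.

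For (a)$\Rightarrow$(c): I would apply Proposition \ref{strongfirsthe-prop}/(iii) at the point $x_0$, which converts the existence of an extremal into the pointwise relations $F^*(\mathcal{L}(\dot\gamma_y(t)))\ge F^*(-\mathcal{L}(\dot\gamma_y(t)))$, $\mathbf{K}(\dot\gamma_y(t),\cdot)\equiv 0$ and $\mathbf{S}(\dot\gamma_y(t))\equiv 0$, valid for all $y\in S_{x_0}M$ and $t\ge 0$. Because $(M,F,d\mathfrak{m})$ is Cartan-Hadamard we have $\mathfrak{i}_{x_0}=+\infty$ and every forward ray from $x_0$ is minimizing, so the vanishing of $\mathbf{K}$ and $\mathbf{S}$ along all such rays puts $x_0$ into the equality case of the comparison principle (\ref{new2.7}) for every radius, i.e. $\mathfrak{m}(B^+_{x_0}(r))=\mathscr{L}_{\mathfrak{m}}(x_0)\,r^n$ for all $r>0$. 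Combined with the hypothesis $\mathscr{L}_{\mathfrak{m}}(x_0)=\inf_{x\in M}\mathscr{L}_{\mathfrak{m}}(x)$ and with $\mathbf{K}\le 0$, $\mathbf{S}\le 0$, $\mathfrak{i}_M=+\infty$, this is exactly the situation of Lemma \ref{lemm2}, which yields $d\mathfrak{m}\in[d\mathfrak{m}_{BH}]$, $\mathbf{K}=0$ and $\mathbf{S}=\mathbf{S}_{BH}=0$. It remains to get reversibility: taking $t=0$ in the first pointwise relation and recalling that $\mathcal{L}(y)=g_y(y,\cdot)$ satisfies $F^*(\mathcal{L}(y))=F(y)=1$, while $\mathcal{L}$ maps $S_{x_0}M$ bijectively onto $S^*_{x_0}M$, we obtain $F^*(x_0,\eta)\ge F^*(x_0,-\eta)$ for every $\eta\in S^*_{x_0}M$; by Lemma \ref{resib} this forces $\lambda_F(x_0)=1$, and then $\lambda_F(M)=\lambda_F(x_0)=1$, so $F$ is reversible.

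For (c)$\Rightarrow$(b): fix an arbitrary $x\in M$ and verify the hypotheses of Proposition \ref{strongfirsthe-prop}/(iii)(b) at $x$. Reversibility of $F$ gives $\lambda_{F^*}(x)=\lambda_F(x)=1$ by Lemma \ref{reversLem}, hence $F^*(x,\eta)=F^*(x,-\eta)$ for all $\eta$, so in particular $F^*(\mathcal{L}(\dot\gamma_y(t)))=F^*(-\mathcal{L}(\dot\gamma_y(t)))$ for every $y\in S_xM$, $t\ge 0$; the remaining two requirements $\mathbf{K}(\dot\gamma_y(t),\cdot)\equiv 0$ and $\mathbf{S}(\dot\gamma_y(t))\equiv 0$ follow at once from $\mathbf{K}=0$ and $\mathbf{S}=0$. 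Proposition \ref{strongfirsthe-prop}/(iii) then furnishes an extremal of $({\mathbf{J}}^{\rm max}_{2,0,x})$, and since $x$ was arbitrary (b) holds; finally (b)$\Rightarrow$(a) is immediate.

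The main obstacle is the first half of (a)$\Rightarrow$(c). The data produced by Proposition \ref{strongfirsthe-prop} only controls the flag curvature with a \emph{radial} flagpole and the $S$-curvature along the \emph{radial} geodesics issuing from $x_0$ — a priori this says nothing about non-radial flagpoles or about the curvature seen from points other than $x_0$; what rescues the argument is that in a Cartan-Hadamard manifold every point lies on a unique forward ray from $x_0$, so this radial information pins down $\mathfrak{m}(B^+_{x_0}(r))$ exactly, and feeding that together with the minimality $\mathscr{L}_{\mathfrak{m}}(x_0)=\inf_M\mathscr{L}_{\mathfrak{m}}$ into Lemma \ref{lemm2} is precisely what upgrades everything to the global conclusions $\mathbf{K}=0$, $\mathbf{S}=\mathbf{S}_{BH}=0$, $d\mathfrak{m}\in[d\mathfrak{m}_{BH}]$. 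The reversibility step is short but is the one place where the hypothesis $\lambda_F(x_0)=\lambda_F(M)$ is genuinely used, and it rests on $\mathcal{L}$ carrying the indicatrix onto the co-indicatrix, which turns the single inequality at $t=0$ into a statement about \emph{all} unit covectors at $x_0$.
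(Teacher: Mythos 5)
Your proposal is correct and follows essentially the same route as the paper: both directions rest on Proposition \ref{strongfirsthe-prop}/(iii), with reversibility obtained from the limit $t\to 0^+$ of the covector inequality plus Lemma \ref{resib} and the hypothesis $\lambda_F(x_0)=\lambda_F(M)$, and the global conclusions from $\mathfrak{m}(B^+_{x_0}(r))=\mathscr{L}_{\mathfrak{m}}(x_0)r^n$ fed into Lemma \ref{lemm2}. The only (harmless) difference is that you deduce the volume identity directly from the radial vanishing of $\mathbf{K}$ and $\mathbf{S}$ via the equality case of (\ref{new2.7}), whereas the paper re-derives it from the explicit extremal $u=e^{-\rho^2_{x_0}}$ and $\rho_{x_0}\Delta\rho_{x_0}=n-1$.
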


\begin{proof} (b)$\Rightarrow$(a) is trivial.
	
		(c)$\Rightarrow$(b)
Since $(M,F)$ is reversible, by Proposition \ref{strongfirsthe-prop}/(ii), the constant ${n^2}/4$ is sharp in $({\mathbf{J}}^{\rm max}_{2,0,x})$ for every $x\in M$. Moreover, by Proposition \ref{strongfirsthe-prop}/(iii), it turns out that ${n^2}/{4}$ is achieved by an extremal in $({\mathbf{J}}^{\rm max}_{2,0,x})$ for every $x\in M$.

 (a)$\Rightarrow$(c).
 By Proposition \ref{strongfirsthe-prop}/(iii), one has $F^*(\mathcal {L}(\dot{\gamma}_y(t)))\geq F^*(-\mathcal {L}(\dot{\gamma}_y(t)))$ for every $t>0$ and $y\in S_{x_0}M$. By letting $t\rightarrow 0^+$, we have
$F^*(\mathcal {L}(y))\geq  F^*(-\mathcal {L}(y))$ for all $y\in S_{x_0}M.
$ Since the latter inequality is equivalent to
$F^*(x_0,\eta)\geq   F^*(x_0,-\eta)$ for all $\eta\in S^*_{x_0}M$, Lemma \ref{resib} implies that $\lambda_F(x_0)=1.$
Consequently, by the hypothesis it follows that $\lambda_F(M)=1$, i.e., $F$ is reversible. Due to the proof of Proposition \ref{strongfirsthe-prop}, the extremal function in $({\mathbf{J}}^{\rm max}_{2,0,x_0})$  has the particular form  $u=e^{-\rho^2_{x_0}}>0$ (up to scalar multiplication). Thus, by (\ref{3.00}) we have that $\rho_{x_0}\Delta\rho_{x_0}=n-1$ for every $x\in M\backslash\{x_0\}$; the equality case in the volume comparison principle implies that
$
\mathfrak{m}(B^+_{x_0}(r))=\mathscr{L}_{\mathfrak{m}}(x_0) r^n$ for all $r>0.
$
Now the statement directly follows by Lemma \ref{lemm2}.
\end{proof}

In the case $p=2,q=0$,  Theorem \ref{thirdCan} is  a consequence of  the   following result.
\begin{proposition}\label{reverLemmaflag-prop}
Let $(M,F,  d\mathfrak{m})$ be an $n$-dimensional  Cartan-Hadamard Finsler manifold with
$
\mathbf{S} \leq 0$ and $\lambda_F(M)<+\infty.
$
Then
\begin{align*}
J_{2,0}(x,u)\geq \frac{n^2}{4\lambda_F^2(M)},\ \ \forall  x\in M,\ u\in C^\infty_0(M)\backslash\{0\}.\tag{3.9}\label{3.700}
\end{align*}
In addition, assume that there exists a point $x_0\in M$ such that
$
\mathscr{L}_\mathfrak{m}(x_0)= \inf_{x\in M}\mathscr{L}_\mathfrak{m}(x)
.$   Then the following statements are equivalent:
\begin{itemize}
	\item[{\rm (a)}] $\frac{n^2}{4\lambda_F^2(M)}$ is achieved by an extremal $u$ in $({\mathbf{J}}_{2,0,x_0});$
	\item[{\rm (b)}] $\lambda_F(M)=1$, $d\mathfrak{m}\in [d\mathfrak{m}_{BH}],$  $\mathbf{K}=0$,  $\mathbf{S}=\mathbf{S}_{BH}=0$ and $u=C e^{-\kappa\rho_{x_0}^2}$ for some  $C\in\mathbb R\setminus \{0\}$ and $\kappa>0$.
\end{itemize}
\end{proposition}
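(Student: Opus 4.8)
The plan is to deduce the statement from Proposition \ref{strongfirsthe-prop} (applied at $x_0$), together with Lemmas \ref{reversLem}, \ref{resib} and \ref{lemm2}. For the inequality $(\ref{3.700})$, the starting point is the pointwise estimate
\[
F^*(-du)(x)\le \lambda_{F^*}(x)\,F^*(du)(x)\le \lambda_F(M)\,F^*(du)(x),
\]
immediate from the definition of $\lambda_{F^*}$ and Lemma \ref{reversLem}. Squaring and comparing with $F^{*2}(du)$ gives $\max\{F^{*2}(\pm du)\}\le \lambda_F^2(M)\,F^{*2}(du)$ everywhere on $M$, hence $J^{\rm max}_{2,0}(x,u)\le \lambda_F^2(M)\,J_{2,0}(x,u)$ for every $x\in M$ and every admissible $u$. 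Combining this with $J^{\rm max}_{2,0}(x,u)\ge \frac{n^2}{4}$ (Proposition \ref{strongfirsthe-prop}(i)) yields $(\ref{3.700})$.

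\emph{Implication (b)$\Rightarrow$(a).} If $\lambda_F(M)=1$ then $F$ is reversible, so $\max\{F^{*2}(\pm du)\}=F^{*2}(du)$, $J_{2,0}=J^{\rm max}_{2,0}$, $\frac{n^2}{4\lambda_F^2(M)}=\frac{n^2}{4}$, and the hypothesis $F^*(\mathcal L(\dot\gamma_y(t)))\ge F^*(-\mathcal L(\dot\gamma_y(t)))$ of Proposition \ref{strongfirsthe-prop}(iii) holds automatically. Under $\mathbf K=0$ and $\mathbf S=0$ (which hold in particular along every geodesic issuing from $x_0$), Proposition \ref{strongfirsthe-prop}(iii)\,(b)$\Rightarrow$(a) produces an extremal; moreover, using the polar representation $d\mathfrak m=e^{-\tau(y)}r^{n-1}\,dr\wedge d\nu_{x_0}(y)$ and the rescaling $r\mapsto r/\sqrt\kappa$ in the resulting one-dimensional integrals, one checks that $u=Ce^{-\kappa\rho_{x_0}^2}$ achieves $\frac{n^2}{4}$ for every $C\in\mathbb R\setminus\{0\}$ and $\kappa>0$.

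\emph{Implication (a)$\Rightarrow$(b).} Let $u$ be an extremal in $({\mathbf{J}}_{2,0,x_0})$. From the chain in the first paragraph,
\[
\frac{n^2}{4}\le J^{\rm max}_{2,0}(x_0,u)\le \lambda_F^2(M)\,J_{2,0}(x_0,u)=\frac{n^2}{4},
\]
so $u$ is simultaneously an extremal in $({\mathbf{J}}^{\rm max}_{2,0,x_0})$. Hence Proposition \ref{strongfirsthe-prop}(iii)\,(a)$\Rightarrow$(b) gives $F^*(\mathcal L(\dot\gamma_y(t)))\ge F^*(-\mathcal L(\dot\gamma_y(t)))$, $\mathbf K(\dot\gamma_y(t),\cdot)\equiv0$, $\mathbf S(\dot\gamma_y(t))\equiv0$ for all $y\in S_{x_0}M$, $t\ge0$, and the explicit form $u=Ce^{-\kappa\rho_{x_0}^2}$ (we may take $C>0$). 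To squeeze out reversibility I would observe that the middle inequality above is now an equality, i.e. $\int_M\max\{F^{*2}(\pm du)\}\,d\mathfrak m=\lambda_F^2(M)\int_M F^{*2}(du)\,d\mathfrak m$; since the integrands satisfy the pointwise bound above and $du\ne0$ a.e. (the cut locus of $x_0$ is empty on a Cartan-Hadamard manifold), this forces $\max\{F^*(du),F^*(-du)\}=\lambda_F(M)\,F^*(du)$ a.e. For $u=Ce^{-\kappa\rho_{x_0}^2}$ the differential $du$ is a negative multiple of $d\rho_{x_0}$, so by the eikonal relation $F^*(d\rho_{x_0})=1$ and by $F^*(d\rho_{x_0})\ge F^*(-d\rho_{x_0})$ the last identity reduces to $\lambda_F(M)\,F^*(-d\rho_{x_0})=1$ a.e.; letting $t\to0^+$ along $\dot\gamma_y(t)$ and using that $\mathcal L$ restricts to a bijection of $S_{x_0}M$ onto $S^*_{x_0}M$, this becomes $F^*(x_0,\eta)=\lambda_F(M)\,F^*(x_0,-\eta)$ for all $\eta\in S^*_{x_0}M$. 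Taking the supremum over $\eta$ gives $\lambda_F(x_0)=\lambda_F(M)$, so the previous identity is precisely condition (iii) of Lemma \ref{resib} at $x_0$, whence $\lambda_F(x_0)=\lambda_F(M)=1$. Finally, equality in $(\ref{3.00})$ for the positive function $u$ forces $\rho_{x_0}\Delta\rho_{x_0}=n-1$ off $x_0$, i.e. $\mathfrak m(B^+_{x_0}(r))=\mathscr L_{\mathfrak m}(x_0)\,r^n$ for every $r>0$; since $(M,F)$ is Cartan-Hadamard (hence $\mathfrak i_M=+\infty$) with $\mathbf S\le0$ and $\mathscr L_{\mathfrak m}(x_0)=\inf_M\mathscr L_{\mathfrak m}$, Lemma \ref{lemm2} delivers $d\mathfrak m\in[d\mathfrak m_{BH}]$, $\mathbf K=0$ and $\mathbf S=\mathbf S_{BH}=0$, which together with the above establishes (b).

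\emph{Expected main difficulty.} The only genuinely delicate step is obtaining $\lambda_F(M)=1$ in (a)$\Rightarrow$(b): one must pass from the integral equality to the pointwise identity $\max\{F^*(\pm du)\}=\lambda_F(M)F^*(du)$, evaluate it on the explicit radial extremal, and then upgrade the resulting one-parameter family of Minkowski-norm identities to the global relation via the bijectivity of the Legendre transform on indicatrices together with Lemma \ref{resib}. Everything else is routine bookkeeping layered over Proposition \ref{strongfirsthe-prop}.
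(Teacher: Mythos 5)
Your proposal is correct and takes essentially the same route as the paper: reduce to the max-functional via the pointwise bound $\max\{F^{*2}(\pm du)\}\le\lambda_F^2(M)\,F^{*2}(du)$ so that Proposition \ref{strongfirsthe-prop} applies, use its equality analysis to obtain the radial form of $u$ and the pointwise identity $F^*(d\rho_{x_0})=\lambda_F(M)F^*(-d\rho_{x_0})$, pass to $x_0$ through the Legendre transform and Lemma \ref{resib} to force $\lambda_F(x_0)=\lambda_F(M)=1$, and conclude with the volume rigidity coming from equality in (\ref{3.00}) together with Lemma \ref{lemm2} (the paper packages this last step as Proposition \ref{seoncdimpr-p}, and localizes the identity to a small punctured ball instead of using the a.e.\ global identity, but these are organizational differences only). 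The single gloss is ``we may take $C>0$'': since $J_{2,0}$ is not invariant under $u\mapsto -u$ this is not literally without loss of generality, but the case $C<0$ (the paper's Case 2) makes the identity $\max\{F^*(\pm du)\}=\lambda_F(M)F^*(du)$ collapse to $\lambda_F(M)=1$ at once, so no real gap results.
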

\begin{proof}
Let us fix $x_0\in M$ and $u\in C_0^\infty(M)\setminus\{0\}$ arbitrarily. As in (\ref{3.5**}), we have
\begin{align*}
\left( 2n\ds\int_M  u^2 d\mathfrak{m}  \right)^2\leq &\left(  \ds\int_M \Delta \rho^2_{x_0} u^2 d\mathfrak{m} \right)^2\\
\leq &16\left(\ds\int_{M}u^2\rho^2_{x_0}d\mathfrak{m} \right) \left(\ds\int_{M}\max\{ F^{*2}(\pm du)\}d\mathfrak{m}\right)\\
\leq &16\,\lambda_F^2(M)\left( \ds\int_M u^2\rho^2_{x_0} d\mathfrak{m}   \right)  \left( \ds\int_M F^{*2}(du) d\mathfrak{m} \right).\tag{3.10}\label{2.2}
\end{align*}
which yields the validity of (\ref{3.700}), being  equivalent to $({\mathbf{J}}_{2,0,x_0})$.

Now assume that there exists a point $x_0\in M$ such that
 $
 \mathscr{L}_\mathfrak{m}(x_0)= \inf_{x\in M}\mathscr{L}_\mathfrak{m}(x).
 $ The implication (b)$\Rightarrow$(a) follows by  Proposition \ref{seoncdimpr-p}.

 (a)$\Rightarrow$(b)
Suppose that $\frac{n^2}{4\lambda_F^2(M)}$ is achieved by an extremal $u$ in $({\mathbf{J}}_{2,0,x_0})$. Note that in order to prove (\ref{3.700}), we explored the estimates (\ref{3.6})-(\ref{3.5**}) from Proposition \ref{strongfirsthe-prop}; hence, from its proof  we conclude that  $u=u(r)$ together with
$u\geq 0$ and $\frac{\partial u}{\partial r}< 0$ on $M_-$, and $u\leq 0$ and $\frac{\partial u}{\partial r}> 0$ on $M_+$,
where $(r,y)$ is the polar coordinate system around  $x_0$. It is immediate that either $M=M_-\sqcup M_0$ or $M=M_+\sqcup M_0$.  In particular, by the above properties, it follows that $u(x_0)\neq 0$.

We  claim that $F$ is reversible.

\textit{Case 1}:  $M=M_-\sqcup M_0$. By relations (\ref{3.6})-(\ref{3.5**}) and (\ref{2.2}) one gets
\[
\left\{
\begin{array}{lll}
& u F^*(-du)=u\max\{F^*(\pm du) \}=\lambda_F(M) u F^*(du)\ {\rm on}\  M,\\
\tag{3.11}\label{4.11new**}\\
&\max\{F^*(\pm du) \}=\kappa  |u| r\ {\rm on}\  M,
\end{array}
\right.
\]
where $\kappa\geq 0$ is a constant. Clearly, $\kappa>0$ otherwise $u=0$.
Since $u(x_0)\neq 0$ (in fact, $u(x_0)>0$), there exists a small forward ball $B^+_{x_0}(\delta)$ such that $u|_{B^+_{x_0}(\delta)} >0$ and
$F^*(\pm du)|_{B^+_{x_0}(\delta)\setminus \{x_0\}}>0$ (cf. (\ref{4.11new**})). Therefore,  $B^+_{x_0}(\delta)\backslash\{x_0\}\subset M_-$.
In particular, relation (\ref{4.11new**})  implies that for every $x\in B^+_{x_0}(\delta)\backslash\{x_0\}$, one has
$
   F^*(dr)=\lambda_F(M) F^*(-dr).
$
Thus, by Lemma \ref{reversLem} and the latter relation we have
\[
    \lambda_F(x_0)F^*(-\eta) \geq F^*(\eta)=\lambda_F(M) F^*(-\eta),\ \forall\eta\in S^*_{x_0}M, \tag{3.12}\label{4.12new**}
\]
which implies that $\lambda_F(x_0)\geq \lambda_F(M).$ By definition, the converse inequality also holds, thus  $\lambda_F(x_0)=\lambda_F(M);$ in particular, by  (\ref{4.12new**}) and Lemma \ref{resib} one has $\lambda_F(x_0)=1$, thus $\lambda_F(M)=1$, i.e., $F$ is reversible.

\textit{Case 2}: $M=M_+\sqcup M_0$. In this case we have
 \[
u F^*(du)=u\max\{F^*(\pm du) \}=\lambda_F(M) u F^*(du)\ {\rm on}\  M.\tag{3.13}\label{4.13new**}
\]
A similar argument as above yields the existence of a small  forward ball $B^+_{x_0}(\delta)\backslash\{x_0\}\subset M_+$ such that $u|_{B^+_{x_0}(\delta)} <0$.  Then (\ref{4.13new**}) restricted to ${B^+_{x_0}(\delta)}$ furnishes $\lambda_F(M)=1$.

Since $F$ is reversible, it turns out that $J^{\rm max}_{2,0}(x,u)=J_{2,0}(x,u)$ for every $x\in M$ and $u\in C_0^\infty(M)\setminus\{0\}$; thus Proposition \ref{seoncdimpr-p} provides the required properties.
\end{proof}


We conclude this subsection by stating an uncertainty principle without reversibility. To do this, let $(M,F)$ be a  Cardan-Hardamard manifold and $x_0\in M$ be fixed.
If $u\in C^1(M)$, then $\langle du, \nabla \rho_{x_0}\rangle$ exists on every point of $M$ except $x_0$.
We
introduce the following notation: for any $x\in M\backslash\{x_0\}$,
\begin{align*}
\|d u\|_{x_0, F}(x):=\left\{
\begin{array}{lll}
& F^*(-du)(x), & \ \ \ \text{if }\langle du, \nabla \rho_{x_0}\rangle(x)<0, \\
\\
& F^*(du)(x), & \ \ \ \text{if } \langle du, \nabla \rho_{x_0}\rangle(x)>0,\\
\\
&\frac12\left[F^*(du)(x)+F^*(-du)(x)\right], & \ \ \ \text{if } \langle du, \nabla \rho_{x_0}\rangle(x)=0.
\end{array}
\right.
\end{align*}
By the polar coordinate system around $x_0$, one can easily show that $\|d u\|_{x_0, F}$ is continuous a.e. on $M$ and agrees with $F^*(du)$ whenever $F$ is reversible.
Then Theorem \ref{fistCan} has the following alternative (we state it for both cases (I) and (II) from (\ref{1.1})):
\begin{theorem} \label{theorem-without}
	Given $p,q,n$ as in $(\ref{1.1}),$ and
	let $(M,F, d\mathfrak{m})$ be an $n$-dimensional Cardan-Hardamard manifold with
	$\ \mathbf{S}\leq0.
$
	Set
	\[
	\mathscr{J}_{p,q}(x,u):=\frac{ \left( \ds\int_M  \|d u\|^2_{x, F}   d\mathfrak{m}\right)\left(\ds\int_M \frac{|u|^{2p-2} }{\rho_{x}^{2q-2}} d\mathfrak{m}\right)}{\left(\ds\int_M \frac{|u|^p}{\rho_{x}^q}d\mathfrak{m} \right)^2},\   x\in M,\ u\in C^\infty_0(M)\backslash\{0\}.
	\]
	Then the following statements hold:

\begin{itemize}
	\item[{\rm (i)}] For every $x\in M$, $$\mathscr{J}_{p,q}(x,u)\geq \frac{(n-q)^2}{p^2},\ \forall u\in C^\infty_0(M)\backslash\{0\}. \eqno{({{\mathfrak J}}_{p,q,x})} $$   Moreover, $\frac{(n-q)^2}{p^2}$ is sharp, i.e.,
$
	\inf_{u\in C^\infty_0(M)\setminus\{0\}}\mathscr{J}_{p,q}(x,u)=\frac{(n-q)^2}{p^2}.
$
	\item[{\rm (ii)}] 	If there exists some point $x_0\in M$ such that
$
	\mathscr{L}_{\mathfrak{m}}(x_0)= \inf_{x\in M}\mathscr{L}_{\mathfrak{m}}(x),
$
	then the following statements are equivalent:

\begin{itemize}
	\item[{\rm (a)}] $\frac{(n-q)^2}{p^2}$ is achieved by an extremal in $({\rm {\mathfrak J}}_{p,q,x_0});$
	\item[{\rm (b)}] $\frac{(n-q)^2}{p^2}$ is achieved by an extremal in $({\rm {\mathfrak J}}_{p,q,x})$ for any $x\in M;$
	\item[{\rm (c)}]$(M,F,d\mathfrak{m})$  satisfies $d\mathfrak{m}\in [d\mathfrak{m}_{BH}]$, $\mathbf{K}=0$  and $\mathbf{S}=\mathbf{S}_{BH}=0$.
\end{itemize}
 \end{itemize}
\end{theorem}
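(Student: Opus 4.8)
The plan is to run the arguments of Propositions~\ref{strongfirsthe-prop}, \ref{seoncdimpr-p} and \ref{reverLemmaflag-prop} (and, for the range (II) of (\ref{1.1}), their Caffarelli--Kohn--Nirenberg counterparts established in Section~\ref{Sec5}) essentially verbatim, with the single substitution $\max\{F^{*2}(\pm du)\}\rightsquigarrow\|du\|^2_{x,F}$. The key point is that the chain of estimates (\ref{3.6})--(\ref{3.5**}) never actually needed the larger quantity $\max\{F^*(\pm du)\}$: on $M_-$ one already bounds $|u|\rho_x\langle -du,\nabla\rho_x\rangle$ by $|u|\rho_x F^*(-du)$, on $M_+$ one bounds $|u|\rho_x\langle du,\nabla\rho_x\rangle$ by $|u|\rho_x F^*(du)$, and on $M_0$ the integrand on the left vanishes identically, so any nonnegative majorant --- in particular $\tfrac12[F^*(du)+F^*(-du)]$ --- is admissible. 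Summing the three pieces yields precisely $\int_M|u|\rho_x\|du\|_{x,F}\,d\mathfrak{m}$, and then Hölder's inequality together with (\ref{3.00}) and the Laplacian comparison (\ref{new2.6}) gives $\mathscr{J}_{2,0}(x,u)\geq n^2/4$; the same manipulation with the weight $\rho_x^{2-q}$ (as in Section~\ref{Sec5}) gives $\mathscr{J}_{p,q}(x,u)\geq (n-q)^2/p^2$ in case (II). This proves the lower bound in (i). Note that since $\|du\|_{x,F}\leq \max\{F^*(\pm du)\}$ pointwise, this bound is formally stronger than $({\rm {\mathbf J}}^{\rm max}_{p,q,x})$, and --- crucially --- it requires no reversibility hypothesis.

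For the sharpness in (i) I would repeat the rescaling argument of Proposition~\ref{strongfirsthe-prop}/(ii): localize to a small forward ball $B^+_x(\delta)$, pass to the polar coordinate system around $x$, rescale, and test against $u=e^{-r^2}$ in case (I), resp. $u=(C+r^{2-q})^{\frac1{2-p}}$ in case (II). These profiles are strictly decreasing in $r$, so $\langle du,\nabla\rho_x\rangle<0$ on $M\setminus\{x\}$, whence by definition $\|du\|_{x,F}=F^*(-du)$; since $-du=|u'(r)|\,d\rho_x$ and $F^*(d\rho_x)=1$ by the eikonal relation (\ref{eikonal}), one obtains $\|du\|_{x,F}=|u'(r)|$ \emph{identically}. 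This is exactly where the reversibility condition of Proposition~\ref{strongfirsthe-prop}/(ii) becomes superfluous: the directional norm automatically selects the branch along which the extremal decays, so the computation reproduces the Euclidean value $(n-q)^2/p^2$ unconditionally, giving $\inf_u\mathscr{J}_{p,q}(x,u)=(n-q)^2/p^2$.

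The equivalences in (ii) follow the pattern of Propositions~\ref{seoncdimpr-p} and \ref{reverLemmaflag-prop}. Here (b)$\Rightarrow$(a) is trivial. For (c)$\Rightarrow$(b): the hypotheses $\mathbf{K}=0$, $\mathbf{S}=\mathbf{S}_{BH}=0$ give, via the equality case of the volume comparison, the representation $d\mathfrak{m}=e^{-\tau(y)}r^{n-1}dr\wedge d\nu_x(y)$ in polar coordinates around every $x\in M$, while $d\mathfrak{m}\in[d\mathfrak{m}_{BH}]$ forces $\mathscr{L}_{\mathfrak{m}}$ to be constant (Lemma~\ref{lemm1}); plugging the explicit profiles $u=e^{-\rho_x^2}$, resp. $u=(C+\rho_x^{2-q})^{\frac1{2-p}}$, into $\mathscr{J}_{p,q}(x,\cdot)$ and using $\|du\|_{x,F}=|u'(r)|$ once more, a direct computation yields equality in $({\mathfrak J}_{p,q,x})$ for every $x$. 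For (a)$\Rightarrow$(c) I would inspect the equality cases at $x_0$: equality in the first estimate forces $u\,\langle du,\nabla\rho_{x_0}\rangle$ to have constant sign a.e.; equality in the directional step forces $\nabla u\parallel\nabla\rho_{x_0}$ where $u\neq0$ and $du\neq0$ (hence $u=u(r)$ there) and $du=0$ on $M_0\cap\{u\neq0\}$; and equality in Hölder gives $\|du\|_{x_0,F}=\kappa\rho_{x_0}|u|$ for a constant $\kappa>0$. As in Proposition~\ref{strongfirsthe-prop}/(iii), the sign pattern that would force $\int_M u^2\,d\mathfrak{m}=+\infty$ is excluded by the admissibility of the extremal, so $u$ coincides (up to a positive constant) with the explicit exponential, resp. power, profile; in particular $u$ is nowhere zero, so equality in (\ref{3.00}), i.e.\ $\rho_{x_0}\Delta\rho_{x_0}=n-1$, holds on all of $M\setminus\{x_0\}$. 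This is exactly equality in the volume comparison (\ref{new2.7}), giving $\mathfrak{m}(B^+_{x_0}(r))=\mathscr{L}_{\mathfrak{m}}(x_0)r^n$ for every $r>0$; combined with $\mathscr{L}_{\mathfrak{m}}(x_0)=\inf_{x\in M}\mathscr{L}_{\mathfrak{m}}(x)$, Lemma~\ref{lemm2} then delivers $d\mathfrak{m}\in[d\mathfrak{m}_{BH}]$, $\mathbf{K}=0$ and $\mathbf{S}=\mathbf{S}_{BH}=0$, which is (c).

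The main obstacle, as in the earlier propositions, is the structure analysis of the extremal in (a)$\Rightarrow$(c): the equality cases only pin $u$ down on $\{u\neq0\}\setminus M_0$ and a priori allow $u$ to be locally constant on $M_0$, so one must argue --- using radiality, the monotonicity dictated by the sign of $u\langle du,\nabla\rho_{x_0}\rangle$, the ODE $|u'(r)|=\kappa r|u(r)|$, and the finiteness $\int_M u^2\,d\mathfrak{m}<+\infty$ --- that $u$ is in fact a globally defined, nowhere vanishing radial profile of the expected type, so that the pointwise identity $\rho_{x_0}\Delta\rho_{x_0}=n-1$ propagates to every geodesic ray from $x_0$ and the rigidity half of the volume comparison can be invoked. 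In the range (II) one additionally needs the integrability and decay bookkeeping that make the power profile admissible and that validate the analogue of (\ref{3.00}) involving $\Delta(\rho_{x_0}^{2-q})$; these are precisely the computations carried out in Section~\ref{Sec5}, which we invoke rather than reproduce.
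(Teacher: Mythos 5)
Your proposal is correct and follows essentially the same route as the paper, which itself proves Theorem \ref{theorem-without} by rerunning the chain (\ref{3.6})--(\ref{3.5**}) with $\|du\|_{x,F}$ in place of $\max\{F^*(\pm du)\}$ and then declaring the sharpness and rigidity parts "the same as in Theorem \ref{fistCan}". Your explicit observation that the radial decreasing test profiles satisfy $\|du\|_{x,F}=F^*(-du)=|u'(r)|$, which is exactly why the reversibility-type hypothesis of Proposition \ref{strongfirsthe-prop}/(ii) becomes superfluous, is precisely the point the paper leaves implicit.
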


\begin{proof} The proof is almost the same as before; we only consider the case (I) in (\ref{1.1}). Fix a point $x_0\in M$. By (\ref{3.6}), we have
	\begin{align*}
	\left|\ds\int_M u\rho_{x_0}\langle du, \nabla \rho_{x_0}    \rangle  d\mathfrak{m}\right|
	\leq &\ds\int_{M_-}|u|\rho_{x_0}\langle d(-u),\nabla\rho_{x_0}    \rangle  d\mathfrak{m}+\ds\int_{M_+}|u|\rho_{x_0}\langle du, \nabla \rho_{x_0}    \rangle   d\mathfrak{m} \\
	\leq &\ds\int_{M_-}|u|\rho_{x_0}\|du\|_{x_0,F}d\mathfrak{m}+\ds\int_{M_+}|u|\rho_{x_0}\|du\|_{x_0,F} d\mathfrak{m}\\
	=&\ds\int_{M}|u|\rho_{x_0}\|du\|_{x_0,F}d\mathfrak{m}\leq \left(\ds\int_M|u|^2\rho^2_{x_0}d\mathfrak{m}  \right)^\frac12\left(\ds\int_M  \|du\|^2_{x_0,F}d\mathfrak{m}\right)^\frac12,
	\end{align*}
	which together with (\ref{3.00}) yields the required inequality. The rest of the proof is the same as in Theorem \ref{fistCan}. 
\end{proof}

\subsection{Non-negatively curved case (proof of Theorem \ref{forCan} when $p=2$ and $q=0$)}

In the case  $p=2$ and $q=0$,  Theorem \ref{forCan} directly follows by the following result.
\begin{proposition}\label{Ricclemm-p}
Let $(M,F, d\mathfrak{m})$ be an $n$-dimensional  forward  complete Finsler manifold with
$\mathbf{Ric}\geq 0$ and $\mathbf{S}\geq 0.
$
If for some $x_0\in M$,
\[
\lambda_F(x_0)= \lambda_F(M),\ \mathscr{L}_{\mathfrak{m}}(x_0)=\sup_{x\in M} \mathscr{L}_{\mathfrak{m}}(x),
\]
then
\begin{align*}
\left(\ds\int_M \min\left\{ F^{*2}(\pm du)  \right\}d \mathfrak{m} \right)\left( \ds\int_M \rho^2_{x_0}  u^2 d\mathfrak{m} \right)\geq \frac{n^2}{4}\left(\ds\int_M u^2 d\mathfrak{m}\right)^2\tag{3.14}\label{3.12}
\end{align*}
holds for any $u\in C^\infty_0(M)$ if and only if
$
\lambda_F(M)=1,\  d\mathfrak{m}\in [d\mathfrak{m}_{BH}],\ \mathbf{K}=0$ and $ \mathbf{S}=\mathbf{S}_{BH}=0.
$
\end{proposition}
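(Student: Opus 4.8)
The plan is to prove the two implications of the stated equivalence separately; the forward implication (validity of (\ref{3.12}) forces (c)) is the substantial one.

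\textit{Sufficiency.} Assume (c). Since $\lambda_F(M)=1$, $F$ is reversible, so $\min\{F^{*2}(\pm du)\}=\max\{F^{*2}(\pm du)\}=F^{*2}(du)$ and $F^{*}(\pm d\rho_{x_0})=1$ a.e.; by Lemma \ref{lemm1} the function $\mathscr{L}_{\mathfrak{m}}$ is constant, and since $\mathbf{K}=0$, $\mathbf{S}=0$ equality holds in the volume comparison (\ref{new2.6})--(\ref{new2.7}), whence $d\mathfrak{m}(r,y)=e^{-\tau(y)}r^{n-1}dr\wedge d\nu_{x_0}(y)$ and $\rho_{x_0}\Delta\rho_{x_0}=n-1$ off $\{x_0\}\cup\mathrm{Cut}_{x_0}$; moreover, by Remark \ref{elso-remark}(ii) the forward complete reversible manifold $(M,F)$ is Minkowskian, so $\mathfrak{i}_{x_0}=+\infty$ and $\Delta\rho_{x_0}^2\equiv 2n$ on $M\setminus\{x_0\}$. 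Then the divergence formula (\ref{new2.4}), the Cauchy--Schwarz chain (\ref{3.6})--(\ref{3.5**}) and reversibility yield (\ref{3.12}) for every $u\in C^\infty_0(M)$, exactly as in Proposition \ref{strongfirsthe-prop}(i).

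\textit{Necessity, Step 1 (reversibility).} Assume (\ref{3.12}) holds for all $u\in C^\infty_0(M)$. Fixing polar coordinates $(r,y)$ around $x_0$, one has $dr|_{(r,y)}=\mathcal{L}(\dot\gamma_y(r))\to\mathcal{L}(y)$ as $r\to 0^+$, so for the Gaussian profile $v(r)=e^{-r^2}$ the rescaled functions $f_R:=v(R\rho_{x_0})$ (truncated to lie in $C^\infty_0(B^+_{x_0}(\delta))$) satisfy, letting first $R\to\infty$ and then $\delta\to 0^+$ via (\ref{new2.5}),
\[
J^{\rm min}_{2,0}(x_0,f_R)\ \longrightarrow\ \frac{n^2}{4}\cdot\frac{\ds\int_{S_{x_0}M}\min\{1,F^{*2}(-\mathcal{L}(y))\}\,e^{-\tau(y)}\,d\nu_{x_0}(y)}{\ds\int_{S_{x_0}M}e^{-\tau(y)}\,d\nu_{x_0}(y)},
\]
where I used the elementary identity $\int_0^\infty r^{n+1}e^{-2r^2}dr=\frac n4\int_0^\infty r^{n-1}e^{-2r^2}dr$. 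Since (\ref{3.12}) forces this limit to be $\geq\frac{n^2}{4}$ and $\min\{1,\cdot\}\leq 1$, necessarily $\min\{1,F^{*2}(-\mathcal{L}(y))\}\equiv 1$ on $S_{x_0}M$, i.e. $F^*(x_0,-\eta)\geq F^*(x_0,\eta)$ for every $\eta\in S^*_{x_0}M$; then Lemma \ref{resib} gives $\lambda_F(x_0)=1$, whence $\lambda_F(M)=\lambda_F(x_0)=1$ and $F$ is reversible.

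\textit{Necessity, Step 2 (volume rigidity and conclusion).} As $F$ is now reversible, $\min\{F^{*2}(\pm du)\}=F^{*2}(du)$ and $F^*(\pm d\rho_{x_0})=1$; testing (\ref{3.12}) with $u_\lambda:=e^{-\lambda\rho_{x_0}^2}$ (approximated in $C^\infty_0(M)$, the integrals being finite since $\mathbf{Ric}\geq0$ gives at most polynomial volume growth) and writing $G(\lambda):=\int_M e^{-2\lambda\rho_{x_0}^2}d\mathfrak{m}$, so that $\int_M\rho_{x_0}^2 e^{-2\lambda\rho_{x_0}^2}d\mathfrak{m}=-\tfrac12 G'(\lambda)$, inequality (\ref{3.12}) becomes $\lambda^2 G'(\lambda)^2\geq\tfrac{n^2}{4}G(\lambda)^2$, i.e. $\lambda\mapsto\lambda^{n/2}G(\lambda)$ is non-increasing on $(0,\infty)$. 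On the other hand, by $\mathbf{Ric}\geq0$, $\mathbf{S}\geq0$ and (\ref{new2.8})--(\ref{new2.9}) the quotient $r\mapsto\mathfrak{m}(B^+_{x_0}(r))/r^n$ is non-increasing with limit $\mathscr{L}_{\mathfrak{m}}(x_0)$ as $r\to 0^+$ (by (\ref{new2.5})), so a layer-cake computation gives, with $c_n:=2^{-n/2}\Gamma(\tfrac n2+1)$, both $\lambda^{n/2}G(\lambda)\leq c_n\mathscr{L}_{\mathfrak{m}}(x_0)$ and $\lim_{\lambda\to+\infty}\lambda^{n/2}G(\lambda)=c_n\mathscr{L}_{\mathfrak{m}}(x_0)$. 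Being non-increasing with this value as its limit at $+\infty$, $\lambda^{n/2}G(\lambda)$ is identically $c_n\mathscr{L}_{\mathfrak{m}}(x_0)$, and by injectivity of the Laplace transform this forces $\mathfrak{m}(B^+_{x_0}(r))=\mathscr{L}_{\mathfrak{m}}(x_0)r^n$ for all $r>0$. Since $\mathscr{L}_{\mathfrak{m}}(x_0)=\sup_{x\in M}\mathscr{L}_{\mathfrak{m}}(x)$ by hypothesis, Lemma \ref{lemm3} now yields $d\mathfrak{m}\in[d\mathfrak{m}_{BH}]$, $\mathbf{K}=0$ and $\mathbf{S}=\mathbf{S}_{BH}=0$; combined with $\lambda_F(M)=1$ from Step 1, this is precisely (c).

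\textit{Main obstacle.} The delicate point is Step 2: one must combine the one-sided monotonicity of $\lambda\mapsto\lambda^{n/2}G(\lambda)$ extracted from (\ref{3.12}) with the $\mathbf{Ric}\geq0$ volume upper bound in order to pin this monotone function to its limiting value at $+\infty$, and then invert the resulting integral identity to recover the exact volume equality $\mathfrak{m}(B^+_{x_0}(r))=\mathscr{L}_{\mathfrak{m}}(x_0)r^n$. By contrast, the blow-up of Step 1 is essentially that of Proposition \ref{strongfirsthe-prop}(ii), and once the volume rigidity is available the conclusion is immediate from Lemma \ref{lemm3}.
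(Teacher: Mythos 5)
Your proof of the substantial (necessity) direction is correct, and its second step is essentially the paper's own argument: the paper also tests (\ref{3.12}) with the Gaussian family $u_s=e^{-s\rho_{x_0}^2}$, turns the resulting inequality $-s\mathscr{T}'(s)\geq\frac{n}{2}\mathscr{T}(s)$ into a monotonicity statement (their $\mathscr{T}(s)/T(s)$ is your $\lambda^{n/2}G(\lambda)$ up to the constant $c_n\mathscr{L}_{\mathfrak m}(x_0)$), squeezes it between the upper bound coming from (\ref{new2.9})--(\ref{bg-nov}) and the small-ball asymptotics (\ref{new2.5}), concludes $\mathfrak m(B^+_{x_0}(r))=\mathscr{L}_{\mathfrak m}(x_0)r^n$ for all $r>0$, and invokes Lemma \ref{lemm3}. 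Where you genuinely differ is the order and the mechanism for reversibility. The paper proves the volume rigidity \emph{first}, without knowing reversibility, via the one-line bound $\min\{F^{*2}(\pm du_s)\}\leq F^{*2}(-du_s)=4s^2\rho_{x_0}^2u_s^2$, and only afterwards extracts $F^*(dr)\leq F^*(-dr)$ by plugging $u=e^{-\rho_{x_0}^2}$ into (\ref{3.12}) together with the now-exact polar volume form, then applies Lemma \ref{resib}. You instead obtain $\lambda_F(x_0)=1$ \emph{first} by a blow-up of rescaled truncated Gaussians concentrating at $x_0$, in the spirit of the sharpness argument of Proposition \ref{strongfirsthe-prop}/(ii); this uses only (\ref{new2.5}), the eikonal relation and continuity of $y\mapsto F^*(-\mathcal L(y))$, and it yields the slightly stronger local fact that (\ref{3.12}) alone forces $F^*(x_0,-\eta)\geq F^*(x_0,\eta)$ on $S^*_{x_0}M$ before any global volume information is available. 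Both routes are valid, and in fact your Step 2 would go through verbatim without reversibility if you used the paper's bound on the minimum, so the swap is a matter of taste rather than necessity.

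One caveat concerns your sufficiency argument: you justify $\mathfrak i_{x_0}=+\infty$ by reading Remark \ref{elso-remark}/(ii) as saying that a forward complete reversible Finsler manifold with $\mathbf K=0$ and $\mathbf S_{BH}=0$ is Minkowskian. That remark only records that the \emph{known examples} with these properties are Minkowskian and explicitly states that no full classification is available, so it cannot be cited as a theorem. The paper itself dismisses the ``if'' part as trivial (the intended mechanism being that reversibility reduces (\ref{3.12}) to the inequality of Proposition \ref{strongfirsthe-prop}/(i), with equality in the comparison (\ref{new2.6}) under $\mathbf K=0$, $\mathbf S=0$); you should phrase this direction along those lines rather than through an unproved Minkowski classification.
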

\begin{proof} The "if" part is trivial; in the sequel, we deal with the "only if" part.
First, we observe that $M$ is not compact. Now, consider $u_s(x)=e^{-s\rho^2_{x_0}(x)}$ for $s>0$, $x\in M$. A direct calculation yields
\begin{align*}
\ds\int_M \min\left\{ F^{*2}(du_s),\, F^{*2}(-du_s) \right\}d \mathfrak{m} \leq 4s^2\ds\int_M \rho^2_{x_0}  u_s^2 d\mathfrak{m}.
\end{align*}
Hence, putting as the test-function $u_s$ in (\ref{3.12}), it follows that
\begin{align*}
 {2s}\ds\int_M \rho^2_{x_0} e^{-2s{\rho^2_{x_0}}} d \mathfrak{m}\geq \frac{n}{2}\ds\int_Me^{-2s{\rho^2_{x_0}}} d \mathfrak{m}, \ s>0.\tag{3.15}\label{3.13}
\end{align*}
Now set
\[
\mathscr{T}(s):=\ds\int_M   u_s^2 d\mathfrak{m}=\ds\int_Me^{-2s{\rho^2_{x_0}}} d \mathfrak{m}.
\]
The layer cake representation yields
\begin{align*}
\mathscr{T}(s)=&\ds\int_0^\infty \mathfrak{m}\left(\left\{ x\in M:\, e^{-2s{\rho^2_{x_0}(x)}}>t \right\} \right)dt\\ =&\ds\int_0^1 \mathfrak{m}\left(\left\{ x\in M:\, e^{-2s{\rho^2_{x_0}(x)}}>t \right\} \right)dt\\
 =&4s\int^\infty_0 le^{-2s {l^2} }\mathfrak{m}\left(\left\{ x\in M:\, {\rho_{x_0}(x)}<l \right\} \right)dl\\
 =& {4s} \int^\infty_0 le^{-2s {l^2} }\mathfrak{m}\left(B^+_{x_0}(l) \right)dl.\tag{3.16}\label{3.14}
\end{align*}
Since $\mathbf{Ric}\geq 0$ and $\mathbf{S}\geq 0$, (\ref{new2.9}) furnishes
\begin{equation}\label{bg-nov}\tag{3.17}
\mathfrak{m}\left(B^+_{x_0}(l) \right)\leq \mathscr{L}_{\mathfrak{m}}(x_0)\, l^n, \ \forall\,l>0,
\end{equation}
and hence,
\begin{align*}
\mathscr{T}(s)\leq  {4s} \mathscr{L}_{\mathfrak{m}}(x_0)\int^\infty_0  l^{n+1}e^{-2s {l^2} } dl<+\infty.
\end{align*}
In particular, $\mathscr{T}$ is well-defined and (\ref{3.13}) can be equivalently transformed into
\[
-s \mathscr{T}'(s)\geq \frac{n}{2 }\mathscr{T}(s), \ \forall s>0,
\]
which implies
\[
\frac{\mathscr{T}'(s)}{\mathscr{T}(s)}\leq -\frac{n}{2s }= \frac{T'(  s)}{T(  s)},
\]
where
\[
T(s):=\frac{\mathscr{L}_{\mathfrak{m}}(x_0)}{\vol(\mathbb{B}^{n})}\ds\int_{\mathbb{R}^n} e^{-2s|x|^2}dx,\ \  \ -s T'(s)=\frac{n}2T(s).
\]
Then we obtain that
\[
\frac{d}{ds}\ln\left[ \frac{\mathscr{T}(s)}{T(s)} \right]\leq0\Longrightarrow f(s):= \frac{\mathscr{T}(s)}{T(s)}\text{ is non-increasing on}\ (0,\infty).
\]
Therefore, for every $s\in (0,\infty)$,
\[
f(s)\geq \underset{s\rightarrow +\infty}{\lim\inf}f(s).\tag{3.18}\label{3.15}
\]
Note that (\ref{new2.5}) implies
\[
\lim_{r\rightarrow 0^+}\frac{\mathfrak{m}(B^+_{x_0}(r))}{\mathscr{L}_\mathfrak{m}(x_0) r^n}= 1.
\]
Hence, for any $\varepsilon>0$, there exists $r_\varepsilon>0$ such that for any $r\in (0,r_\varepsilon)$,
$
\mathfrak{m}(B^+_{x_0}(r)) \geq (1-\varepsilon)\mathscr{L}_\mathfrak{m}(x_0) r^n,
$
which together with (\ref{3.14}) yields
\begin{align*}
\mathscr{T}(s)&\geq 4s\int^{r_\varepsilon}_0 t e^{-2st^2}\mathfrak{m}\left(B^+_{x_0}(t)\right)dt\geq 4s(1-\varepsilon)\mathscr{L}_{\mathfrak{m}}(x_0)\int^{r_\varepsilon}_0 t^{n+1}e^{-2st^2}dt\\
&\geq \frac{2}{(2s)^\frac{n}2}(1-\varepsilon)\mathscr{L}_{\mathfrak{m}}(x_0)\int^{\sqrt{2s}r_\varepsilon}_0 t^{n+1}e^{-t^2}dt.\tag{3.19}\label{3.16}
\end{align*}
Since
\[
T(s)= \frac{2}{(2s)^\frac{n}2} \mathscr{L}_{\mathfrak{m}}(x_0)\int^{\infty}_0 t^{n+1}e^{-t^2}dt,
\]
relation (\ref{3.16}) implies that
\[
\underset{s\rightarrow +\infty}{\lim\inf} \frac{\mathscr{T}(s)}{T(s)}\geq 1-\varepsilon.
\]
The arbitrariness of $\varepsilon>0$ together with (\ref{3.15}) yields
\[
\mathscr{T}(s) \geq T(s), \ \forall s>0,
\]
i.e., by  (\ref{3.14}),
\begin{align*}
\int^\infty_0t e^{-2st^2}\left( \mathfrak{m}(B^+_{x_0}(t))-\mathscr{L}_{\mathfrak{m}}(x_0) t^n  \right)dt\geq 0.
\end{align*}
On the other hand, relation (\ref{bg-nov}) together with the latter relation implies
\[
\mathfrak{m}(B^+_{x_0}(t))= \mathscr{L}_{\mathfrak{m}}(x_0) t^n,\ \forall t>0.
\]
 Now it follows from Lemma \ref{lemm3} that $d\mathfrak{m}\in [ d\mathfrak{m}_{BH}]$, $\mathbf{K}=0$ and $\mathbf{S}=\mathbf{S}_{BH}=0$.

It remains to prove the reversibility of $F$. To do this, let $(r,y)$ be the polar coordinate system around  $x_0$ and let $u=e^{-\rho_{x_0}^2}$ be the test function in (\ref{3.12}), i.e.,
\[
\left(\int^\infty_0 4e^{-2r^2}\min\{1,\, F^{*2}(-dr)\}r^{n+1}dr \right)\left( \int^\infty_0e^{-2r^2}r^{n+1}dr\right)\geq \frac{n^2}4\left( \int^\infty_0 e^{-2r^2}r^{n-1}dr \right)^2,
\]
which is nothing but
\[
\int^\infty_0e^{-2r^2}\min\{1, F^{*2}(-dr)\}r^{n+1}dr\geq \int^\infty_0e^{-2r^2}r^{n+1}dr.
\]
Therefore, we necessarily have $\min\{1, F^{*2}(-dr)\}=1$ for any $(r,y)\in M$,  i.e.,  $1=F^*(dr)\leq F^*(-dr)$. In particular, it turns out that $F^*(x_0,\eta)\leq F^*(x_0,-\eta)$ for every $\eta\in S_{x_0}^*M$.  Now Lemma \ref{resib} implies $\lambda_{F}(x_0)=1$; by  the assumption $\lambda_F(x_0)= \lambda_F(M)$ we conclude the proof.
\end{proof}

\section{Caffarelli-Kohn-Nirenberg interpolation inequality: Case (II) in (\ref{1.1})}\label{Sec5}

In this section we shortly present the proof of Theorems \ref{fistCan}-\ref{forCan} in the case (II) of (\ref{1.1}). Since the arguments are similar to those in the previous section, we focus only on  the differences.

\subsection{Non-positively curved case (proof of Theorems \ref{fistCan}\&\ref{thirdCan} when $0<q<2<p$ and $2<n<\frac{2(p-q)}{p-2}$)}

The counterpart of Proposition \ref{strongfirsthe-prop} reads as follows.

\begin{proposition}\label{CKNFirst}
Let $(M,F, d\mathfrak{m})$ be an $n$-dimensional Cartan-Hadamard manifold with ${\bf S}\leq 0$ and let $J^{\rm max}_{p,q}$ be defined by $(\ref{1.2})$ with $p,q\in \mathbb{R}$ and $n\in \mathbb{N}$ as in the case {\rm  (II)} of $(\ref{1.1}).$ Let $x_0\in M$ be arbitrarily fixed. Then we have the following:

\begin{itemize}
	\item[{\rm (i)}]  $({ \mathbf{ J}}^{\rm max}_{p,q,x_0})$ holds, i.e., $J^{\rm max}_{p,q}(x_0,u)\geq \frac{(n-q)^2}{p^2}$ for every 
	$u\in C^\infty_0(M)\setminus \{0\}.$
	\item[{\rm (ii)}] $\frac{(n-q)^2}{p^2}$ is sharp in $({ \mathbf{ J}}^{\rm max}_{p,q,x_0})$ whenever   $F^*(\mathcal {L}(\dot{\gamma}_y(t)))\geq F^*(-\mathcal {L}(\dot{\gamma}_y(t)))$ for any $y\in S_{x_0}M$ and $t\geq 0.$
	\item[{\rm (iii)}]  The following statements are equivalent:
	\begin{itemize}
		\item[{\rm (a)}] $\frac{(n-q)^2}{p^2}$ is achieved by an extremal in $({ \mathbf{ J}}^{\rm max}_{p,q,x_0});$
		\item[{\rm (b)}]   $F^*(\mathcal {L}(\dot{\gamma}_y(t)))\geq F^*(-\mathcal {L}(\dot{\gamma}_y(t)))$,  $\mathbf{K}(\dot{\gamma}_y(t),\cdot)\equiv0$ and $\mathbf{S}(\dot{\gamma}_y(t))\equiv0$ for all $ y\in S_{x_0}M$ and $t\geq 0$.
	\end{itemize}
\end{itemize}
\end{proposition}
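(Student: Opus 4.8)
The plan is to repeat the argument of Proposition~\ref{strongfirsthe-prop} almost verbatim, the only structural change being that the function $\rho_{x_0}^2$ used there in the Heisenberg--Pauli--Weyl case is replaced by $\rho_{x_0}^{2-q}$ (legitimate since $q<2$). \textbf{Part (i).} First I would record, using the Laplacian comparison (\ref{new2.6}) — available because the manifold is Cartan--Hadamard with $\mathbf{S}\le0$, so that $\rho_{x_0}$ is smooth on $M\setminus\{x_0\}$ and $\mathfrak{i}_{x_0}=+\infty$ — together with the eikonal relation (\ref{eikonal}), i.e. $F(\nabla\rho_{x_0})=1$, the pointwise inequality
\[
\frac{\rho_{x_0}^{q}}{2-q}\,\Delta(\rho_{x_0}^{2-q})=\rho_{x_0}\Delta\rho_{x_0}+(1-q)\ \ge\ (n-1)+(1-q)=n-q \qquad\text{on }M\setminus\{x_0\}.
\]
Multiplying by $|u|^p\rho_{x_0}^{-q}\ge0$, integrating over $M$, and integrating by parts through (\ref{new2.4}) — after excising a small forward ball $B_{x_0}^+(\varepsilon)$, whose boundary contribution is $O(\varepsilon^{n-q})\to0$ because $n>q$ — gives
\[
(n-q)\int_M\frac{|u|^p}{\rho_{x_0}^{q}}\,d\mathfrak{m}\ \le\ \frac{1}{2-q}\int_M|u|^p\,\Delta(\rho_{x_0}^{2-q})\,d\mathfrak{m}\ =\ -p\int_M|u|^{p-2}u\,\rho_{x_0}^{1-q}\,\langle du,\nabla\rho_{x_0}\rangle\,d\mathfrak{m}.
\]
Splitting the last integral over the sets $M_\pm,M_0$ of (\ref{3.0*}) and bounding the radial derivative by $F^*(\pm du)$ on $M_\pm$ exactly as in (\ref{3.6})--(\ref{new4.5**}), then closing with Hölder's inequality (exponents $2,2$) applied to the pair $\rho_{x_0}^{1-q}|u|^{p-1}=\big(|u|^{2p-2}\rho_{x_0}^{2-2q}\big)^{1/2}$ and $\max\{F^*(\pm du)\}$, yields $({\mathbf{J}}^{\rm max}_{p,q,x_0})$.

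\textbf{Part (ii).} For the sharpness I would argue by contradiction as in Proposition~\ref{strongfirsthe-prop}/(ii): supposing the infimum $C$ of $J^{\rm max}_{p,q}(x_0,\cdot)$ exceeds $\frac{(n-q)^2}{p^2}$, I localize (\ref{1.2}) on a small forward ball around $x_0$, where $d\mathfrak{m}\approx e^{-\tau(y)}r^{n-1}dr\wedge d\nu_{x_0}(y)$, and use the scale invariance of $J^{\rm max}_{p,q}$ on this model under the radial dilation $u(r,y)\mapsto u(Rr,y)$ to push any $C_0^\infty(M)$ test function into that ball. For the radially decreasing test function $u=\big(C+\rho_{x_0}^{2-q}\big)^{1/(2-p)}$, $C>0$, the hypothesis $F^*(\mathcal{L}(\dot{\gamma}_y(t)))\ge F^*(-\mathcal{L}(\dot{\gamma}_y(t)))$ forces $\max\{F^{*2}(\pm du)\}=|u'(\rho_{x_0})|^2$ — the analogue of (\ref{new4.7}) — so the functional collapses to a one-dimensional ratio. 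A direct evaluation, convergent precisely because $2<n<\frac{2(p-q)}{p-2}$ and resting on the Beta-function identity $B(\alpha+1,\beta)=\frac{\alpha}{\alpha+\beta}B(\alpha,\beta)$ with $\alpha=\frac{n-q}{2-q}$ and $\alpha+\beta=\frac{p}{p-2}$, shows this ratio equals $\frac{(n-q)^2}{p^2}$, contradicting $C>\frac{(n-q)^2}{p^2}$.

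\textbf{Part (iii).} For (b)$\Rightarrow$(a): under (b) the vanishing of $\mathbf{K}(\dot{\gamma}_y,\cdot)$ and $\mathbf{S}(\dot{\gamma}_y)$ gives $d\mathfrak{m}(r,y)=e^{-\tau(y)}r^{n-1}dr\wedge d\nu_{x_0}(y)$ while $F^*(\mathcal{L}(\dot{\gamma}_y(t)))\ge F^*(-\mathcal{L}(\dot{\gamma}_y(t)))$ collapses $\max\{F^{*2}(\pm du)\}$ to $|u'(\rho_{x_0})|^2$, so the same Beta-function identity applied to $u=\big(C+\rho_{x_0}^{2-q}\big)^{1/(2-p)}$ produces equality in $({\mathbf{J}}^{\rm max}_{p,q,x_0})$. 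For (a)$\Rightarrow$(b): I trace the equality cases of Part (i). Equality in the Hölder step forces $\max\{F^*(\pm du)\}=\kappa\,\rho_{x_0}^{1-q}|u|^{p-1}$; combined with equality in (\ref{3.6})--(\ref{new4.5**}) this forces $u=u(\rho_{x_0})$, monotone along the geodesic rays from $x_0$ and solving the separable ODE whose solutions are $\pm\big(C+\rho_{x_0}^{2-q}\big)^{1/(2-p)}$; the finiteness of $\int_M|u|^p\rho_{x_0}^{-q}\,d\mathfrak{m}$ (which uses $n<\frac{2(p-q)}{p-2}$) eliminates the ``wrong'' sign branch, just as the case ($\tilde a$) was discarded in Proposition~\ref{strongfirsthe-prop}; equality in (\ref{new2.6}) forces $\mathbf{K}(\dot{\gamma}_y(t),\cdot)\equiv0$ and $\mathbf{S}(\dot{\gamma}_y(t))\equiv0$ for all $y\in S_{x_0}M$ and $t\ge0$; and the surviving branch gives $F^*(\mathcal{L}(\dot{\gamma}_y(t)))\ge F^*(-\mathcal{L}(\dot{\gamma}_y(t)))$.

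The delicate point, as in the Heisenberg--Pauli--Weyl case, is the equality discussion in (a)$\Rightarrow$(b): one must deduce from the simultaneous equality in the splitting and in the Hölder inequality that the extremal is radial and solves the correct separable ODE, and then exploit the integrability threshold $n<\frac{2(p-q)}{p-2}$ to reject the spurious sign branch. A secondary technical issue is keeping the integration by parts in Part (i) clean, i.e. checking that the boundary term at $x_0$ really vanishes; everything else is bookkeeping strictly parallel to Proposition~\ref{strongfirsthe-prop}.
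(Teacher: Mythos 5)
Your proposal is correct and follows essentially the same route as the paper: Laplacian comparison plus integration by parts, the splitting over $M_\pm$ with $F^*(\pm du)$ and H\"older for part (i), the localization/rescaling contradiction argument with the test function $(C+\rho_{x_0}^{2-q})^{1/(2-p)}$ for sharpness, and the equality-case tracing (radiality, the separable ODE, rejection of the non-integrable branch, and equality in the volume comparison) for part (iii). The only difference is cosmetic: you phrase the comparison through $\Delta(\rho_{x_0}^{2-q})$ with an explicit excision of a small ball, whereas the paper tests $\Delta\rho_{x_0}\ge (n-1)/\rho_{x_0}$ directly against $|u|^p\rho_{x_0}^{1-q}$ via the distributional formulation.
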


\begin{proof}
(i) Fix  $u\in C^\infty_0(M)$ arbitrarily; then we have
\begin{align*}
\ds\int_M \frac{|u|^p}{\rho_{x_0}^{q-1}}\Delta \rho_{x_0} d\mathfrak{m}
=&-\ds\int_M\left\langle d\left( \frac{|u|^p}{\rho^{q-1}_{x_0}}\right),  \nabla\rho_{x_0}     \right\rangle d\mathfrak{m}\\
=&-p\ds\int_M \frac{|u|^{p-2}u}{\rho_{x_0}^{q-1}}\left\langle du,\nabla\rho_{x_0}    \right\rangle d\mathfrak{m}+(q-1)\ds\int_M \frac{|u|^p}{\rho_{x_0}^q}d\mathfrak{m}\\
\leq& p\left|\ds\int_M \frac{|u|^{p-2}u}{\rho_{x_0}^{q-1}}\left\langle du,\nabla\rho_{x_0}  \right\rangle d\mathfrak{m}   \right|+(q-1)\ds\int_M \frac{|u|^p}{\rho_{x_0}^q}d\mathfrak{m}.\tag{4.1}\label{5.1}
\end{align*}
Set $M_-$, $M_+$ and $M_0$ as in (\ref{3.0*}).
Then one has that
\begin{align*}
\left|\ds\int_M \frac{|u|^{p-2}u}{\rho_{x_0}^{q-1}}\left\langle du,\nabla\rho_{x_0}   \right\rangle d\mathfrak{m}   \right|
= &\left|\ds\int_{M_-} \frac{|u|^{p-2}(-u)}{\rho_{x_0}^{q-1}}\left\langle d(-u),\nabla\rho_{x_0}    \right\rangle d\mathfrak{m}  + \ds\int_{M_+} \frac{|u|^{p-2}u}{\rho_{x_0}^{q-1}}\left\langle du,\nabla\rho_{x_0}  \right\rangle d\mathfrak{m}\right|\\
\leq &\left|\ds\int_{M_-} \frac{|u|^{p-2}(-u)}{\rho_{x_0}^{q-1}}\left\langle d(-u),\nabla\rho_{x_0}    \right\rangle d\mathfrak{m} \right| + \left|\ds\int_{M_+} \frac{|u|^{p-2}u}{\rho_{x_0}^{q-1}}\left\langle du,\nabla\rho_{x_0}   \right\rangle d\mathfrak{m}\right|\\
\leq &\ds\int_{M_-} \frac{|u|^{p-1} }{\rho_{x_0}^{q-1}}\left\langle d(-u),\nabla\rho_{x_0}    \right\rangle d\mathfrak{m}+\ds\int_{M_+} \frac{|u|^{p-1} }{\rho_{x_0}^{q-1}}\left\langle du,\nabla\rho_{x_0}   \right\rangle d\mathfrak{m}\tag{4.2}\label{5.2}\\
\leq &\ds\int_{M_-} \frac{|u|^{p-1} }{\rho_{x_0}^{q-1}}F^*(-du)    d\mathfrak{m}+\ds\int_{M_+} \frac{|u|^{p-1} }{\rho_{x_0}^{q-1}}F^*(du) d\mathfrak{m}\tag{4.3}\label{5.3}\\
\leq& \ds\int_{M} \frac{|u|^{p-1} }{\rho_{x_0}^{q-1}}\max\left\{ F^*(\pm du)  \right\}  d\mathfrak{m}\\
\leq &\left(\ds\int_M \frac{|u|^{2p-2} }{\rho_{x_0}^{2q-2}} d\mathfrak{m}\right)^{\frac12}\left( \ds\int_M  \max\left\{ F^{*2}(\pm du)  \right\}   d\mathfrak{m}\right)^{\frac12},
\end{align*}
which together with (\ref{5.1}) and the Laplace comparison (\ref{new2.6}) yield
\begin{align*}
\left(\ds\int_M \frac{|u|^{2p-2} }{\rho_{x_0}^{2q-2}} d\mathfrak{m}\right) \left( \ds\int_M  \max\left\{ F^{*2}(\pm du)  \right\}    d\mathfrak{m}\right)\geq \frac{(n-q)^2}{p^2}\left(\ds\int_M \frac{|u|^p}{\rho_{x_0}^q}d\mathfrak{m} \right)^2.
\end{align*}

(ii) The sharpness of the constant $\frac{(n-q)^2}{p^2}$ follows in a similar way as in Proposition \ref{strongfirsthe-prop}/(ii); the only difference in the last step is the use of the test function $u=(r^{2-q}+1)^{\frac1{2-p}}$ instead of $u=e^{-r^2}$.

(iii)  Let  $(r,y)$ be the polar coordinate system about $x_0$. If $u$ is an extremal in $({ \mathbf{ J}}^{\rm max}_{p,q,x_0})$, then (\ref{5.3}) implies $u=u(\rho_{x_0})=u(r)$. By the equalities in (\ref{5.1})-(\ref{5.3}) and H\"older inequality, a similar argument as in
 Proposition \ref{strongfirsthe-prop} implies $u\frac{\partial u}{\partial r}\leq 0$ and $\kappa   \frac{|u|^{p-1}}{r^{q-1}}=\left| \frac{\partial u}{\partial r}  \right|$ on $(0,\infty)$ for some
 $\kappa> 0$. By solving this ODE, it follows that $
 u=C_1(r^{2-q}+C_2)^{\frac1{2-p}},$
 for some $C_1\in \mathbb{R} $ and $C_2>0$. In particular,  $u$ has no zero points. The rest of the proof is similar to the one of Proposition \ref{strongfirsthe-prop}/(iii).
 \end{proof}


In the case (II) of (\ref{1.1}), Theorem \ref{fistCan} directly follows from Proposition \ref{strongfirsthe-prop} and the following result; since the proof is almost the same as Proposition \ref{seoncdimpr-p}, we omit it.

\begin{proposition}\label{CKNtheorem}
Under the same assumptions as in Proposition \ref{CKNFirst}, if there exists some point $x_0\in M$ such that
\[
\lambda_F(x_0)=\lambda_F(M),\ \mathscr{L}_{\mathfrak{m}}(x_0)= \inf_{x\in M}\mathscr{L}_{\mathfrak{m}}(x),
\]
then the following statements are equivalent:
\begin{itemize}
	\item[{\rm (a)}] $\frac{(n-q)^2}{p^2}$ is achieved by an extremal in  $({ \mathbf{ J}}^{\rm max}_{p,q,x_0});$
	\item[{\rm (b)}] $\frac{(n-q)^2}{p^2}$ is achieved by an extremal in  $({ \mathbf{ J}}^{\rm max}_{p,q,x})$ for every $x\in M;$
	\item[{\rm (c)}]
	$(M,F,d\mathfrak{m})$ is reversible, $d\mathfrak{m}\in [d\mathfrak{m}_{BH}]$, ${\bf K}=0$ and $\mathbf{S}=\mathbf{S}_{BH}=0$.
\end{itemize}
\end{proposition}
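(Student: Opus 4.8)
The plan is to transplant the argument of Proposition~\ref{seoncdimpr-p} to the Caffarelli-Kohn-Nirenberg setting, the only change being that the Heisenberg-Pauli-Weyl estimate is replaced by the chain (\ref{5.1})-(\ref{5.3}) from the proof of Proposition~\ref{CKNFirst}. The implication (b)$\Rightarrow$(a) is trivial. For (c)$\Rightarrow$(b): when $F$ is reversible, Lemma~\ref{reversLem} shows that $F^*$ is reversible as well, so $F^*(\mathcal{L}(\dot{\gamma}_y(t)))=F^*(-\mathcal{L}(\dot{\gamma}_y(t)))$ for every $x\in M$, every $y\in S_xM$ and every $t\geq0$; combined with the assumptions $\mathbf{K}=0$ and $\mathbf{S}=0$ in (c), which force $\mathbf{K}(\dot{\gamma}_y(t),\cdot)\equiv0$ and $\mathbf{S}(\dot{\gamma}_y(t))\equiv0$ along every geodesic, the implication (b)$\Rightarrow$(a) of Proposition~\ref{CKNFirst}/(iii), applied at an arbitrary base point $x\in M$, yields that $\frac{(n-q)^2}{p^2}$ is achieved by an extremal in $(\mathbf{J}^{\rm max}_{p,q,x})$ for every $x\in M$.

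For (a)$\Rightarrow$(c), assume that an extremal $u$ realizes $\frac{(n-q)^2}{p^2}$ in $(\mathbf{J}^{\rm max}_{p,q,x_0})$. By Proposition~\ref{CKNFirst}/(iii) we get $F^*(\mathcal{L}(\dot{\gamma}_y(t)))\geq F^*(-\mathcal{L}(\dot{\gamma}_y(t)))$ for all $y\in S_{x_0}M$ and $t\geq0$; taking $t=0$ and using that $\mathcal{L}:T_{x_0}M\setminus\{0\}\to T_{x_0}^*M\setminus\{0\}$ is a fiberwise bijection with $F^*(\mathcal{L}(y))=F(y)$, this becomes $F^*(x_0,\eta)\geq F^*(x_0,-\eta)$ for every $\eta\in S_{x_0}^*M$, so Lemma~\ref{resib} forces $\lambda_F(x_0)=1$; together with the hypothesis $\lambda_F(x_0)=\lambda_F(M)$ this gives $\lambda_F(M)=1$, i.e., $F$ is reversible. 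The equality analysis in the proof of Proposition~\ref{CKNFirst}/(iii) shows, moreover, that $u$ is radial about $x_0$, nowhere vanishing, and of the form $u=C_1(\rho_{x_0}^{2-q}+C_2)^{\frac{1}{2-p}}$ with $C_1\in\mathbb{R}\setminus\{0\}$ and $C_2>0$ (up to a sign we may take $C_1>0$). Tracing back the equalities in (\ref{5.1}) and in the Laplace comparison (\ref{new2.6}) used to derive $(\mathbf{J}^{\rm max}_{p,q,x_0})$, we obtain $\rho_{x_0}\Delta\rho_{x_0}=n-1$ on $M\setminus\{x_0\}$ (which is licit since $\mathfrak{i}_{x_0}=+\infty$ on a Cartan-Hadamard manifold), whence the rigidity statement in the volume comparison (\ref{new2.7}) yields $\mathfrak{m}(B^+_{x_0}(r))=\mathscr{L}_{\mathfrak{m}}(x_0)\,r^n$ for all $r>0$. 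Since by assumption $\mathscr{L}_{\mathfrak{m}}(x_0)=\inf_{x\in M}\mathscr{L}_{\mathfrak{m}}(x)$, Lemma~\ref{lemm2} now gives $d\mathfrak{m}\in[d\mathfrak{m}_{BH}]$, $\mathbf{K}=0$ and $\mathbf{S}=\mathbf{S}_{BH}=0$, which is (c).

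This is routine once two facts, already isolated earlier, are granted: the pointwise reversibility criterion of Lemma~\ref{resib}, which turns the infinitesimal ($t\to0^+$) consequence of the geodesic rigidity of the extremal into reversibility at $x_0$, and the equality case of the volume comparison encapsulated in Lemma~\ref{lemm2}. The one point that genuinely requires care, the exact counterpart of the finiteness of $\int_M u^2\,d\mathfrak{m}$ exploited in the Heisenberg-Pauli-Weyl case, is to check that the candidate extremal $u=C_1(\rho_{x_0}^{2-q}+C_2)^{\frac{1}{2-p}}$ actually has finite and positive Caffarelli-Kohn-Nirenberg energy on $M$; this is precisely where the range restriction $2<n<\frac{2(p-q)}{p-2}$ of case~(II) in (\ref{1.1}) enters, ensuring that all the integrations by parts and H\"older estimates leading to $(\mathbf{J}^{\rm max}_{p,q,x_0})$, together with their equality cases, remain valid for this $u$.
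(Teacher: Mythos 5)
Your proposal is correct and follows essentially the same route as the paper: the paper explicitly omits this proof, noting it is "almost the same as Proposition \ref{seoncdimpr-p}", and your argument is exactly that transplantation — (c)$\Rightarrow$(b) via Proposition \ref{CKNFirst}/(ii)--(iii) under reversibility, and (a)$\Rightarrow$(c) via the $t\to 0^+$ limit of the rigidity condition, Lemma \ref{resib}, the hypothesis $\lambda_F(x_0)=\lambda_F(M)$, the equality case $\rho_{x_0}\Delta\rho_{x_0}=n-1$ in the Laplace/volume comparison, and Lemma \ref{lemm2}. Your closing remark on the finiteness of the CKN energy of $u=C_1(\rho_{x_0}^{2-q}+C_2)^{\frac{1}{2-p}}$ under the range restriction of case (II) is a reasonable point of care consistent with the paper's treatment.
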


By a similar argument as in Proposition \ref{reverLemmaflag-prop} one can easily show the following result which implies Theorem \ref{thirdCan} in the case {\rm  (II)} of (\ref{1.1}).
\begin{proposition}
	Let $(M,F,  d\mathfrak{m})$ be an $n$-dimensional  Cartan-Hadamard manifold with
	$
	\mathbf{S} \leq 0,$  $\lambda_F(M)<+\infty,
	$
and   $p,q\in \mathbb{R}$ and $n\in \mathbb{N}$  as in the case {\rm  (II)} of $(\ref{1.1}).$
	Then
	\begin{align*}
	J_{p,q}(x,u)\geq \frac{(n-q)^2}{p^2\lambda_F^2(M)},\ \forall  x\in M,\ u\in C^\infty_0(M)\backslash\{0\}.\tag{4.4}\label{3.7000}
	\end{align*}
In addition, assume that there exists a point $x_0\in M$ such that
$
\mathscr{L}_\mathfrak{m}(x_0)= \inf_{x\in M}\mathscr{L}_\mathfrak{m}(x)
.$ Then the following statements are equivalent:
\begin{itemize}
	\item[{\rm (a)}] $\frac{(n-q)^2}{p^2\lambda_F^2(M)}$ is achieved by an extremal $u$ in $({\mathbf{J}}_{p,q,x_0});$
	\item[{\rm (b)}]  $\lambda_F(M)=1$, $d\mathfrak{m}\in [d\mathfrak{m}_{BH}],$  $\mathbf{K}=0$,  $\mathbf{S}=\mathbf{S}_{BH}=0$ and $u=C_1(\rho_{x_0}^{2-q}+C_2)^{\frac1{2-p}}$ for some  $C_1\neq0$ and $C_2>0 $.
\end{itemize}
\end{proposition}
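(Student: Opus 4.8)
The plan is to follow, \emph{mutatis mutandis}, the proof of Proposition \ref{reverLemmaflag-prop}, replacing the roles of Propositions \ref{strongfirsthe-prop} and \ref{seoncdimpr-p} by Propositions \ref{CKNFirst} and \ref{CKNtheorem}, respectively.

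First I would prove the inequality $(\ref{3.7000})$. Fix $x_0\in M$ and $u\in C^\infty_0(M)\setminus\{0\}$. Running the chain of estimates $(\ref{5.1})$--$(\ref{5.3})$ from the proof of Proposition \ref{CKNFirst}/(i), but bounding $\max\{F^{*2}(\pm du)\}\le \lambda_F^2(M)\,F^{*2}(du)$ --- a direct consequence of the definition of $\lambda_F$ together with Lemma \ref{reversLem} --- and invoking the Laplace comparison $(\ref{new2.6})$, one arrives at
\[
\frac{(n-q)^2}{p^2}\left(\ds\int_M \frac{|u|^p}{\rho_{x_0}^q}\,d\mathfrak{m}\right)^2\le \lambda_F^2(M)\left(\ds\int_M \frac{|u|^{2p-2}}{\rho_{x_0}^{2q-2}}\,d\mathfrak{m}\right)\left(\ds\int_M F^{*2}(du)\,d\mathfrak{m}\right),
\]
which is precisely $({\mathbf{J}}_{p,q,x_0})$; since $x_0$ is arbitrary, $(\ref{3.7000})$ follows. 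The implication (b)$\Rightarrow$(a) is then immediate: if $\lambda_F(M)=1$ then $J_{p,q}=J^{\rm max}_{p,q}$ and $\frac{(n-q)^2}{p^2\lambda_F^2(M)}=\frac{(n-q)^2}{p^2}$, so Proposition \ref{CKNtheorem} produces an extremal in $({\mathbf{J}}^{\rm max}_{p,q,x_0})=({\mathbf{J}}_{p,q,x_0})$, which by Proposition \ref{CKNFirst}/(iii) has the claimed form $u=C_1(\rho_{x_0}^{2-q}+C_2)^{\frac1{2-p}}$.

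For (a)$\Rightarrow$(b) I would argue as follows. Suppose $\frac{(n-q)^2}{p^2\lambda_F^2(M)}$ is achieved by an extremal $u$. Since the proof of $(\ref{3.7000})$ runs through the estimates $(\ref{5.1})$--$(\ref{5.3})$, tracking the corresponding equality cases (exactly as in Proposition \ref{CKNFirst}/(iii)) forces $u=u(\rho_{x_0})=u(r)$, with $u\ge 0$ and $\partial u/\partial r<0$ on $M_-$, and $u\le 0$ and $\partial u/\partial r>0$ on $M_+$; in particular either $M=M_-\sqcup M_0$ or $M=M_+\sqcup M_0$, and $u(x_0)\neq0$. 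From here I repeat the two-case argument of Proposition \ref{reverLemmaflag-prop}: choosing a small forward ball $B^+_{x_0}(\delta)$ on which $u$ has constant sign and $F^*(\pm du)>0$, the equality in the relevant step of $(\ref{5.3})$ yields $F^*(dr)=\lambda_F(M)\,F^*(-dr)$ on $B^+_{x_0}(\delta)\setminus\{x_0\}$; Lemma \ref{reversLem} then gives $\lambda_F(x_0)\ge\lambda_F(M)$, hence $\lambda_F(x_0)=\lambda_F(M)$, and Lemma \ref{resib} forces $\lambda_F(x_0)=1$, so $F$ is reversible. With $\lambda_F(M)=1$ one has $J_{p,q}=J^{\rm max}_{p,q}$, and Proposition \ref{CKNtheorem} delivers $d\mathfrak{m}\in[d\mathfrak{m}_{BH}]$, $\mathbf{K}=0$ and $\mathbf{S}=\mathbf{S}_{BH}=0$, while the precise form of $u$ is read off from Proposition \ref{CKNFirst}/(iii).

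The only genuinely delicate point is this extraction of reversibility from the extremal: one must ensure that near $x_0$ the extremal is of one sign and that both $F^*(du)$ and $F^*(-du)$ are strictly positive there, so that the punctured ball lies entirely inside $M_-$ (resp. $M_+$), and then identify which inequality in the chain $(\ref{5.1})$--$(\ref{5.3})$ becomes an equality on that ball. Everything else --- in particular the curvature and measure rigidity, which is packaged into Lemma \ref{lemm2} and hence into Proposition \ref{CKNtheorem} --- is bookkeeping entirely parallel to the case $p=2$, $q=0$ treated in Proposition \ref{reverLemmaflag-prop}.
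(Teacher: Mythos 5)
Your proposal is correct and follows essentially the same route as the paper, which itself proves this proposition by declaring it "a similar argument" to Proposition \ref{reverLemmaflag-prop} with the estimates $(\ref{5.1})$--$(\ref{5.3})$ of Proposition \ref{CKNFirst} and the rigidity packaged in Proposition \ref{CKNtheorem}. Your tracking of the equality cases (the sign structure of $u$, the punctured ball inside $M_-$ or $M_+$, the relation $F^*(dr)=\lambda_F(M)F^*(-dr)$ leading via Lemmas \ref{reversLem} and \ref{resib} to reversibility) is exactly the intended adaptation.
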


\begin{remark}\rm
The proof of Theorem \ref{theorem-without} in the case {\rm  (II)} of (\ref{1.1}) easily follows by the arguments performed in Propositions \ref{CKNFirst} and \ref{CKNtheorem}, respectively.
\end{remark}

\subsection{Non-negatively curved case (proof of Theorem \ref{forCan} when $0<q<2<p$ and $2<n<\frac{2(p-q)}{p-2}$)}

The proof of Theorem \ref{forCan} in the case {\rm  (II)} of (\ref{1.1})  directly follows by the following result.
\begin{proposition}
	Let $(M,F, d\mathfrak{m})$ be an $n$-dimensional  forward  complete Finsler manifold with
	$\mathbf{Ric}\geq 0,\ \mathbf{S}\geq 0,
	$ and  $p,q\in \mathbb{R}$ and $n\in \mathbb{N}$ as in the case {\rm  (II)} of $(\ref{1.1}).$
	If for some $x_0\in M$,
	\[
	\lambda_F(x_0)= \lambda_F(M),\ \mathscr{L}_{\mathfrak{m}}(x_0)=\sup_{x\in M} \mathscr{L}_{\mathfrak{m}}(x),
	\]
	then
	\begin{align*}
	{\left(\ds\int_M \min \{F^{*2}(\pm du)\}  d\mathfrak{m} \right)\left( \ds\int_M \frac{|u|^{2p-2}}{\rho^{2q-2}_{x_0}}    d\mathfrak{m} \right)}\geq \frac{(n-q)^2}{p^2}{\left(\ds\int_M  \frac{|u|^p}{\rho^q_{x_0}} d\mathfrak{m}\right)^2}
	\end{align*}
	holds for every $u\in C^\infty_0(M)$ if and only if
	\[
	\lambda_F(M)=1,\  d\mathfrak{m}\in [d\mathfrak{m}_{BH}],\ \mathbf{K}=0,\ \mathbf{S}=\mathbf{S}_{BH}=0.
	\]
\end{proposition}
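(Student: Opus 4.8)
The plan is to run the argument of Proposition~\ref{Ricclemm-p} almost verbatim, with the Gaussian family $e^{-s\rho_{x_0}^2}$ replaced by the Caffarelli--Kohn--Nirenberg model extremals. The ``if'' part is straightforward (as in Proposition~\ref{Ricclemm-p}): reversibility yields $\min\{F^{*2}(\pm du)\}=F^{*2}(du)$, and $d\mathfrak{m}\in[d\mathfrak{m}_{BH}]$ together with $\mathbf{K}=0$, $\mathbf{S}=\mathbf{S}_{BH}=0$ reduces both sides to their Euclidean shape (equality in the volume comparison, $\rho_{x_0}\Delta\rho_{x_0}=n-1$). For the ``only if'' part, one first records that $M$ cannot be compact: otherwise $u\equiv1\in C^\infty_0(M)$ would make the left-hand side vanish while the right-hand side is positive and finite (on a compact manifold $\ds\int_M\rho_{x_0}^{-q}\,d\mathfrak{m}<\infty$ since $n>q$ in case~(II) of~(\ref{1.1})).

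For $s>0$ set $u_s:=\bigl(1+(s\rho_{x_0})^{2-q}\bigr)^{\frac1{2-p}}$, a positive function of $\rho_{x_0}$, strictly decreasing in $\rho_{x_0}$, lying in $W^{1,2}_{\mathrm{loc}}(M)$ and approximable in $C^\infty_0(M)$. By the eikonal relation $F^*(d\rho_{x_0})=1$ one obtains, pointwise a.e.,
\[
\min\{F^{*2}(\pm du_s)\}\ \le\ |u_s'(\rho_{x_0})|^2\ =\ s^{4-2q}\,\frac{(2-q)^2}{(p-2)^2}\,\frac{|u_s|^{2p-2}}{\rho_{x_0}^{2q-2}},
\]
the last equality being an elementary computation. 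Plugging $u_s$ into the assumed inequality, enlarging $\ds\int_M\min\{F^{*2}(\pm du_s)\}\,d\mathfrak{m}$ by the right-hand side above, and extracting a square root yields
\[
s^{2-q}\,\frac{2-q}{p-2}\ds\int_M\frac{|u_s|^{2p-2}}{\rho_{x_0}^{2q-2}}\,d\mathfrak{m}\ \ge\ \frac{n-q}{p}\ds\int_M\frac{|u_s|^{p}}{\rho_{x_0}^{q}}\,d\mathfrak{m}.
\]
Introducing the monitoring function $\mathscr{T}(s):=\ds\int_M\frac{|u_s|^{p}}{\rho_{x_0}^{q}}\,d\mathfrak{m}$ and differentiating under the integral sign (a second elementary computation: $\mathscr{T}'(s)=\frac{p(2-q)}{2-p}\,s^{1-q}\ds\int_M\frac{|u_s|^{2p-2}}{\rho_{x_0}^{2q-2}}\,d\mathfrak{m}$), the above collapses to the differential inequality $-s\,\mathscr{T}'(s)\ge(n-q)\,\mathscr{T}(s)$ for $s>0$, which is the CKN analogue of the inequality $-s\mathscr{T}'(s)\ge\frac{n}{2}\mathscr{T}(s)$ in Proposition~\ref{Ricclemm-p}.

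From here the argument is exactly that of Proposition~\ref{Ricclemm-p}. Compare $\mathscr{T}$ with the Euclidean benchmark $T(s):=\frac{\mathscr{L}_{\mathfrak{m}}(x_0)}{\vol(\mathbb{B}^n)}\ds\int_{\mathbb{R}^n}\frac{(1+(s|x|)^{2-q})^{\frac{p}{2-p}}}{|x|^q}\,dx$, which satisfies $-sT'(s)=(n-q)T(s)$; hence $s\mapsto\mathscr{T}(s)/T(s)$ is non-increasing. Since $\mathbf{Ric}\ge0$ and $\mathbf{S}\ge0$, (\ref{new2.9}) gives $\mathfrak{m}(B^+_{x_0}(l))\le\mathscr{L}_{\mathfrak{m}}(x_0)\,l^n$ for all $l>0$, so the layer-cake formula forces $\mathscr{T}(s)\le T(s)$, while (\ref{new2.5}) forces $\liminf_{s\to\infty}\mathscr{T}(s)/T(s)\ge1$; therefore $\mathscr{T}\equiv T$, and since the resulting integrand is nonnegative, $\mathfrak{m}(B^+_{x_0}(l))=\mathscr{L}_{\mathfrak{m}}(x_0)\,l^n$ for every $l>0$. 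Lemma~\ref{lemm3} then gives $d\mathfrak{m}\in[d\mathfrak{m}_{BH}]$, $\mathbf{K}=0$, $\mathbf{S}=\mathbf{S}_{BH}=0$. Finally, $d\mathfrak{m}$ now splits as $e^{-\tau(y)}r^{n-1}dr\wedge d\nu_{x_0}(y)$ in polar coordinates around $x_0$ (the volume equality also forces $\mathfrak{i}_{x_0}=+\infty$), so plugging $u=u_1$ into the assumed inequality reduces it to a one-dimensional identity which forces $\min\{1,F^{*2}(-d\rho_{x_0})\}=1$; letting $x\to x_0$ gives $F^*(x_0,\eta)\le F^*(x_0,-\eta)$ for all $\eta\in S^*_{x_0}M$, whence $\lambda_F(x_0)=1$ by Lemma~\ref{resib}, and $\lambda_F(M)=1$ by the hypothesis $\lambda_F(x_0)=\lambda_F(M)$.

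The main obstacle --- indeed the only point requiring genuine care --- is the bookkeeping in the second step: getting the $s$-dependent constant $s^{4-2q}\frac{(2-q)^2}{(p-2)^2}$ right, the matching identity for $\mathscr{T}'(s)$, and above all checking that $\mathscr{T}(s)$, $\ds\int_M\frac{|u_s|^{2p-2}}{\rho_{x_0}^{2q-2}}\,d\mathfrak{m}$ and the weighted moments $\ds\int_0^\infty l^n\,|g_s'(l)|\,dl$ (with $g_s(l)=|u_s(l)|^p l^{-q}$) are all finite --- this last fact holds precisely because of the range restriction $2<n<\frac{2(p-q)}{p-2}$ of case~(II), and that is the only place where the restriction enters. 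Everything else (monotonicity of $\mathscr{T}/T$, the $\liminf$ estimate, the invocations of Lemmas~\ref{lemm3} and~\ref{resib}) is literally the proof of Proposition~\ref{Ricclemm-p}.
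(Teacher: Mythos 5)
Your proposal is correct and takes essentially the same route as the paper: the paper's own proof is precisely the scheme of Proposition~\ref{Ricclemm-p} run with the test family $u_s=(\rho_{x_0}^{2-q}+s)^{\frac{1}{2-p}}$ (your family $(1+(s\rho_{x_0})^{2-q})^{\frac{1}{2-p}}$ is the same up to rescaling), and your computations --- the bound $\min\{F^{*2}(\pm du_s)\}\le|u_s'(\rho_{x_0})|^2$, the differential inequality $-s\,\mathscr{T}'(s)\ge(n-q)\,\mathscr{T}(s)$, the comparison with the Euclidean benchmark, and the invocations of Lemma~\ref{lemm3} and Lemma~\ref{resib} --- reproduce exactly the intended argument, including the correct observation that the case (II) restriction $n<\frac{2(p-q)}{p-2}$ is what guarantees finiteness of the integrals.
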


\begin{proof}
The proof is similar to that of Proposition \ref{Ricclemm-p}; the main difference is to use the test function $u_s(x)=(\rho_{x_0}^{2-q}+s)^{\frac1{2-p}}$ for   $s>0$ instead of $u_s(x)=e^{-s\rho^2_{x_0}(x)}$ for $s>0$. The case when $\lambda_F(M)=1$ and $\mathfrak{m}= \mathfrak{m}_{BH}$ has been considered by Krist\'aly \cite[Theorem 1.2]{Kristaly-JGA}.
\end{proof}

\medskip

\section{Hardy inequality (proof of Theorem \ref{Hardyineq})}\label{section-Hardy}
We first need the following technical lemma.
\begin{lemma}\label{implem}
Given $n\geq 2$, let $(M,F)$ be an $n$-dimensional forward or backward complete Finsler manifold. Then for any $x_0\in M$ and any $k\in [0,n)$, we have
\[
\ds\int_M \left|\frac{u(x)}{\rho^k_{x_0}(x)}\right|d\mathfrak{m}(x)<+\infty,\ \forall u\in C_0^\infty(M).
\]
 \end{lemma}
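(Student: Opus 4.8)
The statement is local in nature: the integrand $|u|\,\rho_{x_0}^{-k}$ can only fail to be $\mathfrak{m}$-integrable because of the singularity of the weight $\rho_{x_0}^{-k}$ at $x_0$, and only when $k>0$ (for $k=0$ the claim is immediate, since $u$ has compact support and $d\mathfrak{m}$ is locally finite). The plan is therefore to fix $u\in C_0^\infty(M)$, put $K:=\operatorname{supp}u$ and $C:=\sup_M|u|$, and note $\mathfrak{m}(K)<+\infty$. If $x_0\notin K$ then $\rho_{x_0}$ attains a positive minimum $\varepsilon$ on the compact set $K$, whence $\int_M|u|\,\rho_{x_0}^{-k}\,d\mathfrak{m}\le C\varepsilon^{-k}\mathfrak{m}(K)<+\infty$ and we are done; so from now on assume $x_0\in K$ and $0<k<n$.

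The next step is to excise a small forward ball about $x_0$. Since the forward injectivity radius $\mathfrak{i}_{x_0}$ is positive, fix $\delta\in(0,\mathfrak{i}_{x_0})$. On $K\setminus B^+_{x_0}(\delta)$ one has $\rho_{x_0}\ge\delta$, hence $\int_{K\setminus B^+_{x_0}(\delta)}|u|\,\rho_{x_0}^{-k}\,d\mathfrak{m}\le C\delta^{-k}\mathfrak{m}(K)<+\infty$. To handle $B^+_{x_0}(\delta)$ I pass to the polar coordinate system $(r,y)$ around $x_0$: because $\delta<\mathfrak{i}_{x_0}$, this chart covers $B^+_{x_0}(\delta)\setminus\{x_0\}$ and never meets $\text{Cut}_{x_0}$, and there $d\mathfrak{m}=\hat{\sigma}_{x_0}(r,y)\,dr\wedge d\nu_{x_0}(y)$. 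By (\ref{new2.5}) the function $\hat{\sigma}_{x_0}(r,y)/r^{n-1}$ tends to $e^{-\tau(y)}$ as $r\to0^+$; since it is continuous up to $r=0$ and $S_{x_0}M$ is compact, after possibly shrinking $\delta$ there is a constant $C_0>0$ with $\hat{\sigma}_{x_0}(r,y)\le C_0r^{n-1}$ for all $0<r<\delta$ and $y\in S_{x_0}M$.

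Setting $\vol(x_0):=\int_{S_{x_0}M}d\nu_{x_0}(y)<+\infty$, the remaining estimate is
\[
\int_{B^+_{x_0}(\delta)}|u|\,\rho_{x_0}^{-k}\,d\mathfrak{m}\le C\int_{S_{x_0}M}\int_0^\delta\frac{\hat{\sigma}_{x_0}(r,y)}{r^{k}}\,dr\,d\nu_{x_0}(y)\le CC_0\vol(x_0)\int_0^\delta r^{\,n-1-k}\,dr=\frac{CC_0\vol(x_0)}{n-k}\,\delta^{\,n-k}<+\infty,
\]
where convergence of the radial integral uses precisely that $k<n$, i.e. $n-1-k>-1$. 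Summing the two contributions proves the lemma when $M$ is forward complete; the backward complete case is identical, because the entire argument lives inside an arbitrarily small forward ball about $x_0$ and only uses that short geodesics issuing from $x_0$ are minimizing (equivalently $\mathfrak{i}_{x_0}>0$), which holds in either case. The only delicate point — and essentially the sole obstacle — is the uniform-in-$y$ bound $\hat{\sigma}_{x_0}(r,y)\le C_0r^{n-1}$ for small $r$: it is not a consequence of the pointwise limit (\ref{new2.5}) by itself, but follows from the joint continuity of $(r,y)\mapsto\hat{\sigma}_{x_0}(r,y)/r^{n-1}$ up to $r=0$ (smoothness of $\exp_{x_0}$ at the origin and of the density of $d\mathfrak{m}$) together with compactness of $S_{x_0}M$; alternatively one may simply invoke the volume comparison framework of Zhao and Shen \cite{ZS}.
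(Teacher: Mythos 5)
Your argument is correct and follows essentially the same route as the paper's proof: isolate a small forward ball around $x_0$, pass to polar coordinates $(r,y)$ there, and use the density asymptotics (\ref{new2.5}) uniformly in $y\in S_{x_0}M$ to reduce matters to $\int_0^\delta r^{\,n-1-k}\,dr<+\infty$, which is where $k<n$ enters. The only (harmless) deviation is that away from $x_0$ you bound the integral by $\sup|u|\,\delta^{-k}\,\mathfrak{m}(\operatorname{supp}u)$ instead of keeping polar coordinates over the whole support as the paper does via Yuan--Zhao--Shen, and your closing remark that the uniform small-$r$ bound on $\hat{\sigma}_{x_0}(r,y)/r^{n-1}$ requires joint continuity up to $r=0$ together with compactness of $S_{x_0}M$ is precisely the point the paper's estimate $\hat{\sigma}_{x_0}(r,y)<2e^{-\tau(y)}r^{n-1}$ also relies on.
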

\begin{proof}
According to Yuan,  Zhao and Shen  \cite[Proposition 3.2]{YZY}, there is a polar coordinate   domain $\mathcal {O}\subset T_{x_0}M$ such that $\exp_{x_0}(\mathcal {O})=M$. Let $(r,y)$ be the polar coordinate system around $x_0$. Since $u\in C^\infty_0(M)$, there exists a finite $R>0$ such that $\text{supp}(u)\subset B^+_{x_0}(R)$ and $\exp_{x_0}: \mathfrak{B}^+_0(R)\rightarrow B^+_{x_0}(R)$ is a diffeomorphism, where
$
\mathfrak{B}^+_0(R):=\{y\in T_{x_0}M:\, F(x_0,y)<R\}\cap \mathcal {O}.
$
Now set $A:=\max|u|<+\infty$. Then we have
\begin{align*}
\ds\int_M \left|\frac{u(x)}{\rho^k_{x_0}(x)}\right|d\mathfrak{m}(x)\leq \ds\int_{B^+_{x_0}(R)}\frac{A}{\rho^k_{x_0}(x)}d\mathfrak{m}(x)=\ds\int_{S_{x_0}M}d\nu_{x_0}(y) \ds\int_{0}^{\min\{R,i_y\}}\frac{A}{r^k}\hat{\sigma}_{x_0}(r,y)dr.\tag{5.1}\label{5.5***}
\end{align*}
Now (\ref{new2.5}) yields that there is a small $\varepsilon>0$ such that $\min\{R,i_y\}>\varepsilon$ for all $y\in S_{x_0}M$ and
\[
\hat{\sigma}_{x_0}(r,y)<2e^{-\tau(y)}r^{n-1},\ \text{  $0<r<\varepsilon$};
\]
the latter relation  together with (\ref{5.5***}) and Remark \ref{remark-kell} furnishes
\begin{align*}
\ds\int_M \left|\frac{u(x)}{\rho^k_{x_0}(x)}\right|d\mathfrak{m}(x)
\leq \mathscr{L}_{\mathfrak{m}}(x_0)\frac{2nA\varepsilon^{n-k}}{n-k}+A\int_{S_{x_0}M}d\nu_{x_0}(y)\int^{\min\{R,i_y\}}_\varepsilon \frac{\hat{\sigma}_{x_0}(r,y)}{r^k}dr<+\infty,
\end{align*}
which concludes the proof.
\end{proof}

\begin{proof}[Proof of Theorem \ref{Hardyineq}] Due  to Lemma \ref{implem}, the proof is similar to the one of Proposition \ref{CKNFirst}. Fix a point $x_0\in M$ and $u\in C^\infty_0(M)$; then we have
\begin{align*}
\ds\int_M \frac{u^2}{\rho_{x_0}^{ }}\Delta \rho_{x_0} d\mathfrak{m}=&-\ds\int_M\left\langle  d\left( \frac{u^2}{\rho^{ }_{x_0}}  \right), \nabla\rho_{x_0}  \right\rangle d\mathfrak{m}
=-2\ds\int_M \frac{u}{\rho_{x_0} }\left\langle du, \nabla\rho_{x_0}   \right\rangle d\mathfrak{m}+ \ds\int_M \frac{u^2}{\rho_{x_0}^2}d\mathfrak{m}\\
\leq& 2\left|\ds\int_M \frac{ u}{\rho_{x_0} }\left\langle du, \nabla\rho_{x_0}   \right\rangle d\mathfrak{m}   \right|+ \ds\int_M \frac{u^2}{\rho_{x_0}^2}d\mathfrak{m}. \tag{5.2}\label{5.6}
\end{align*}
As in  (\ref{3.0*}), set $M_-$, $M_+$ and $M_0$.
Now we have
\begin{align*}
\left|\ds\int_M \frac{ u}{\rho_{x_0} }\left\langle du, \nabla\rho_{x_0}   \right\rangle d\mathfrak{m}   \right|
\leq &\ds\int_{M_-} \frac{|u|}{\rho_{x_0}^{ }}\left\langle d(-u),\nabla\rho_{x_0}   \right\rangle d\mathfrak{m}+\ds\int_{M_+} \frac{|u|}{\rho_{x_0}^{ }}\left\langle du, \nabla\rho_{x_0}   \right\rangle d\mathfrak{m}\tag{5.3}\label{5.7}\\
\leq &\ds\int_{M_-} \frac{|u| }{\rho_{x_0}^{ }}F^*(-du)    d\mathfrak{m}+\ds\int_{M_+} \frac{|u| }{\rho_{x_0}^{ }}F^*(du) d\mathfrak{m}\tag{5.4}\label{5.8}\\
\leq& \ds\int_{M} \frac{|u| }{\rho_{x_0}^{ }}\max\left\{ F^*(\pm du)  \right\}  d\mathfrak{m}\\
\leq &\left(\ds\int_M \frac{u^{2} }{\rho_{x_0}^{2}} d\mathfrak{m}\right)^{\frac12}\left( \ds\int_M  \max\left\{ F^{*2}(\pm du)  \right\}   d\mathfrak{m}\right)^{\frac12},
\end{align*}
which together with (\ref{5.6}) yields
\begin{align*}
\left(\ds\int_M \frac{u^{2} }{\rho_{x_0}^{2}} d\mathfrak{m}\right) \left( \ds\int_M  \max\left\{ F^{*2}(\pm du)  \right\}    d\mathfrak{m}\right)\geq \frac{(n-2)^2}{4}\left(\ds\int_M \frac{u^2}{\rho_{x_0}^2}d\mathfrak{m} \right)^2.
\end{align*}

 Assume in the sequel that $\lambda_F(M)=1$ and let $(r,y)$ be the polar coordinate system around $x_0$.
First, we claim that the constant $(n-2)^2/4$ cannot be archived by  an extremal.
 Otherwise,  the equalities in (\ref{5.6})-(\ref{5.8}) furnish that the extremal must satisfy $u=u(r)$ together with $u\frac{\partial u}{\partial r}\leq 0$ and $\kappa   \frac{|u| }{r }=\left| \frac{\partial u}{\partial r}  \right|$ for some  $\kappa\geq0$.
Thus, $u=\frac{C}{r^\kappa}$ for some $C\in \mathbb{R}\setminus \{0\}$ and  $J_{2,2}(x_0,u)=J^{\rm max}_{2,2}(x_0,u)=\frac{(n-2)^2}{4}$ implies $\kappa=\frac{n-2}2$. However, in this case,  (\ref{new2.6}) together with (\ref{new2.5}) implies that for every $y\in S_{x_0}M$,
\[
\hat{\sigma}_{x_0}(r,y)\geq  e^{-{\tau}(y)} r^{n-1} \text{ for }0<r<{i}_y.\tag{5.5}\label{5.5new++}
\]
Hence, we have
\begin{align*}
\ds\int_M \frac{u^2(x)}{{\rho^{2}_{x_0}}(x)}d\mathfrak{m}=C^2\ds\int_{S_{x_0}M}d\nu_{x_0}(y)\int^{i_y}_0\frac{\hat{\sigma}_{x_0}(r,y)}{r^n}dr
\geq nC^2\mathscr{L}_{\mathfrak{m}}(x_0)\int^{\mathfrak{i}_{x_0}}_0\frac1rdr=+\infty,
\end{align*}
which proves that $\frac{(n-2)^2}4$ cannot be achieved by any function. In the sequel, we prove
\[
\inf_{u\in C^\infty_0(M)\backslash\{0\}}\frac{\ds\int_M  {F^{*2}(d u)} d\mathfrak{m}}{\ds\int_M \frac{u^2}{{\rho^{2}_{x_0}}}d\mathfrak{m}}=\frac{(n-2)^2}{4}=:\gamma^2.
\]

Given $0<\epsilon<r<R< {\mathfrak{i}}_{x_0}$, choose a cut-off function $\psi\in C^\infty_0(M)$ with $\text{supp}(\psi)=B_{x_0}(R)$ and $\psi|_{B_{x_0}(r)}\equiv1$.   Set $u_\epsilon(x):=\left[\max \{\epsilon, \rho_{x_0}(x)\}\right]^{-\gamma}$. Since $u:=\psi u_\epsilon\geq 0$ , we have
\begin{align*}
I_1(\epsilon):=&\ds\int_M  {F^{*2}(d u)} d\mathfrak{m}=\ds\int_{B_{x_0}(r)\backslash B_{x_0}(\epsilon)} {F^{*2}(\gamma \rho_{x_0}^{-\gamma-1} d  \rho_{x_0})} d\mathfrak{m} +\ds\int_{B_{x_0}(R)\backslash B_{x_0}(r)} {F^{*2}(d (\psi \rho^{-\gamma}_{x_0}))} d\mathfrak{m} \\
=&:\gamma^2 \mathfrak{I}_1+\mathfrak{I}_2,\tag{5.6}\label{2.6}
\end{align*}
where
\begin{align*}
\mathfrak{I}_1:=\ds\int_{B_{x_0}(r)\backslash B_{x_0}(\epsilon)}{\rho^{-n}_{x_0}} d\mathfrak{m},\ \mathfrak{I}_2:=\ds\int_{B_{x_0}(R)\backslash B_{x_0}(r)}{F^{*2}(d (\psi \rho^{-\gamma}_{x_0}))}d\mathfrak{m}.
\end{align*}
Clearly, $\mathfrak{I}_2$ is independent of $\epsilon$ and finite.
On the other hand, we have
\begin{align*}
I_2(\epsilon):=&\ds\int_M \frac{u^2(x)}{{\rho^{2}_{x_0}}(x)}d\mathfrak{m} (x)\geq \ds\int_{B_{x_0}(r)\backslash B_{x_0}(\epsilon)} \frac{(\psi u_\epsilon)^2(x)}{{\rho^{2}_{x_0}}(x)}d\mathfrak{m}(x)\\
=&\ds\int_{B_{x_0}(r)\backslash B_{x_0}(\epsilon)} \frac{\rho^{-2\gamma}_{x_0}(x)}{{\rho^{2}_{x_0}}(x)}d\mathfrak{m}(x)=\mathfrak{I}_1.\tag{5.7}\label{2.7}
\end{align*}

We now estimate $\mathfrak{I}_1$.  The co-area formula (\ref{new2.1}) then yields
\begin{align*}
\mathfrak{I}_1
=\int^r_\epsilon dt \ds\int_{ {S}_{x_0}(t)}t^{-n} d {A}=\int^r_\epsilon t^{-n}  {A}( {S}_{x_0}(t))\, dt,\tag{5.8}\label{2.8}
\end{align*}
where $S_{x_0}(t):=\{x\in M:\rho_{x_0}(x)=t\}$.
If $(t,y)$ is the polar coordinate system around $x_0$, (\ref{5.5new++}) yields
\begin{align*}
 {A}( {S}_{x_0}(t))=\ds\int_{ {S_{x_0}M}} \hat{\sigma}_{x_0}(t,y)d\nu_{x_0}(y)
 \geq \ds\int_{ {S_{x_0}M}}e^{- {\tau}(y)} t^{n-1} d\nu_{x_0}(y)=n\mathscr{L}_{\mathfrak{m}}(x_0) t^{n-1}.\tag{5.9}\label{2.9}
\end{align*}
Now (\ref{2.8}) combined with (\ref{2.9}) yields that
\[
\mathfrak{I}_1\geq n\mathscr{L}_{\mathfrak{m}}(x_0)\left[\ln r-\ln \epsilon \right]\rightarrow +\infty,\text{ as }\epsilon\rightarrow 0^+,
\]
which together with   (\ref{2.6}) and (\ref{2.7}) furnishes
\begin{align*}
\gamma^2\leq \inf_{u\in C^\infty_0(M)\backslash\{0\}}\frac{\ds\int_M   {F^{*2}(d u)} d\mathfrak{m}}{\ds\int_M \frac{u^2(x)}{{\rho^{2}_{x_0}}(x)}d\mathfrak{m}} \leq \lim_{\epsilon\rightarrow 0^+}\frac{I_1(\epsilon)}{I_2(\epsilon)}
=\lim_{\epsilon\rightarrow 0^+}\frac{\gamma^2\mathfrak{I}_1+\mathfrak{I}_2}{\mathfrak{I}_1}= \gamma^2,
\end{align*}
which concludes the proof.
\end{proof}

Similarly as in the proof of Theorem \ref{Hardyineq}, one can show the following result without reversibility.
\begin{theorem}\label{th-hardy} Given $n\geq 3$,
	let $(M,F, d\mathfrak{m})$ be an $n$-dimensional forward complete Finsler manifold with $\mathbf{K}\leq 0$ and  $\mathbf{S}\leq 0$. Then
	\begin{align*}
	\ds\int_M  \|du\|_{x,F}^2   d\mathfrak{m} \geq \frac{(n-2)^2}{4} \ds\int_M \frac{u^2}{\rho_{x}^2}d\mathfrak{m},\ \ \forall x\in M,\ u\in C^\infty_0(M).
	\end{align*}
	Moreover,  the constant $\frac{(n-2)^2}{4}$ is sharp but never achieved.
\end{theorem}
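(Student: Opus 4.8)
The plan is to mirror the proof of Theorem \ref{Hardyineq} almost verbatim, the only new ingredient being an a.e.\ inequality relating the pairing $\langle du,\nabla\rho_x\rangle$ to the nonsymmetric quantity $\|du\|_{x,F}$. Fix $x_0\in M$ and $u\in C^\infty_0(M)$. As in (\ref{5.6}), an integration by parts in the sense of the distributional Laplacian (\ref{new2.4}) --- legitimate since $\mathrm{Cut}_{x_0}$ is null and $\rho_{x_0}^{-1}u^2,\ \rho_{x_0}^{-2}u^2$ are integrable by Lemma \ref{implem} (here $n\ge 3$, so the exponent $2$ lies in $[0,n)$) --- gives
\[
\int_M\frac{u^2}{\rho_{x_0}}\,\Delta\rho_{x_0}\,d\mathfrak{m}=-2\int_M\frac{u}{\rho_{x_0}}\langle du,\nabla\rho_{x_0}\rangle\,d\mathfrak{m}+\int_M\frac{u^2}{\rho_{x_0}^2}\,d\mathfrak{m},
\]
while the Laplace comparison (\ref{new2.6}), which applies because $\mathbf K\le 0$ and $\mathbf S\le 0$, yields $\int_M\rho_{x_0}^{-1}u^2\,\Delta\rho_{x_0}\,d\mathfrak{m}\ge(n-1)\int_M\rho_{x_0}^{-2}u^2\,d\mathfrak{m}$.

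Next split $M=M_-\sqcup M_+\sqcup M_0$ as in (\ref{3.0*}). On $M_-$ the eikonal identity (\ref{eikonal}) gives $-\langle du,\nabla\rho_{x_0}\rangle=\langle d(-u),\nabla\rho_{x_0}\rangle\le F^*(-du)\,F(\nabla\rho_{x_0})=F^*(-du)=\|du\|_{x_0,F}$; on $M_+$ likewise $\langle du,\nabla\rho_{x_0}\rangle\le F^*(du)=\|du\|_{x_0,F}$; and on $M_0$ the pairing vanishes whereas $\|du\|_{x_0,F}\ge 0$. Hence $|\langle du,\nabla\rho_{x_0}\rangle|\le\|du\|_{x_0,F}$ a.e.\ on $M$, so H\"older's inequality gives
\[
\Big|\int_M\frac{u}{\rho_{x_0}}\langle du,\nabla\rho_{x_0}\rangle\,d\mathfrak{m}\Big|\le\Big(\int_M\frac{u^2}{\rho_{x_0}^2}\,d\mathfrak{m}\Big)^{\frac12}\Big(\int_M\|du\|_{x_0,F}^2\,d\mathfrak{m}\Big)^{\frac12}.
\]
Combining the three displayed relations yields $(n-2)\int_M\rho_{x_0}^{-2}u^2\,d\mathfrak{m}\le 2\big(\int_M\rho_{x_0}^{-2}u^2\,d\mathfrak{m}\big)^{1/2}\big(\int_M\|du\|_{x_0,F}^2\,d\mathfrak{m}\big)^{1/2}$, which upon squaring is precisely $\int_M\|du\|_{x_0,F}^2\,d\mathfrak{m}\ge\gamma^2\int_M\rho_{x_0}^{-2}u^2\,d\mathfrak{m}$ with $\gamma:=\frac{n-2}{2}$.

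For non-attainment and sharpness I would reuse the second half of the proof of Theorem \ref{Hardyineq}. If $\gamma^2$ were attained by some $u$, then equality in the eikonal and H\"older steps, together with the equality case of (\ref{new2.6}), forces (through the polar coordinate system $(r,y)$ around $x_0$) $u=u(r)$ with $u\,\partial_r u\le 0$ and $\kappa|u|/r=|\partial_r u|$ for some $\kappa\ge 0$; hence $u=C r^{-\kappa}$, and requiring the ratio to equal $\gamma^2$ forces $\kappa=\gamma$. But then (\ref{5.5new++}) and Remark \ref{remark-kell} give $\int_M\rho_{x_0}^{-2}u^2\,d\mathfrak{m}\ge nC^2\mathscr L_{\mathfrak{m}}(x_0)\int_0^{\mathfrak{i}_{x_0}}r^{-1}\,dr=+\infty$, a contradiction. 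For sharpness, fix $0<\epsilon<r<R<\mathfrak{i}_{x_0}$, a cut-off $\psi\in C^\infty_0(M)$ with $\mathrm{supp}\,\psi=B^+_{x_0}(R)$ and $\psi\equiv 1$ on $B^+_{x_0}(r)$, and set $u:=\psi\big[\max\{\epsilon,\rho_{x_0}\}\big]^{-\gamma}\ge 0$. On $B^+_{x_0}(r)\setminus B^+_{x_0}(\epsilon)$ one has $\langle du,\nabla\rho_{x_0}\rangle=-\gamma\rho_{x_0}^{-\gamma-1}<0$, so $\|du\|_{x_0,F}=F^*(-du)=\gamma\rho_{x_0}^{-\gamma-1}F^*(d\rho_{x_0})=\gamma\rho_{x_0}^{-\gamma-1}$ by (\ref{eikonal}) --- exactly the value of $F^*(du)$ used in the reversible case --- and on the annulus $B^+_{x_0}(R)\setminus B^+_{x_0}(r)$ the integrand $\|du\|_{x_0,F}^2\le\max\{F^{*2}(\pm du)\}$ is bounded with finite $\epsilon$-independent integral. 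Thus, with $\mathfrak I_1,\mathfrak I_2$ as in the proof of Theorem \ref{Hardyineq}, $\int_M\|du\|_{x_0,F}^2\,d\mathfrak{m}=\gamma^2\mathfrak I_1+\mathfrak I_2$ and $\int_M\rho_{x_0}^{-2}u^2\,d\mathfrak{m}\ge\mathfrak I_1$, while by the co-area formula (\ref{new2.1}) and (\ref{5.5new++}), $\mathfrak I_1\ge n\mathscr L_{\mathfrak{m}}(x_0)(\ln r-\ln\epsilon)\to+\infty$ as $\epsilon\to 0^+$; hence the Rayleigh quotient tends to $\gamma^2$, and combined with the inequality already proved this gives the sharpness.

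I do not expect any genuine obstacle: passing from $\max\{F^{*2}(\pm du)\}$ to $\|du\|_{x,F}^2$ is essentially cosmetic, the single new point being the a.e.\ bound $|\langle du,\nabla\rho_x\rangle|\le\|du\|_{x,F}$, which is immediate from (\ref{eikonal}) and the definition of $\|\cdot\|_{x,F}$. The only steps that require (routine) care are exactly those already present in Theorem \ref{Hardyineq}: justifying the integration by parts across $\mathrm{Cut}_{x_0}$ and near the singularity of $\rho_{x_0}^{-1}$ at $x_0$, and verifying that the equality analysis still pins down $u=Cr^{-\gamma}$ in the non-reversible case, where one must observe that equality in $-\langle du,\nabla\rho_{x_0}\rangle\le F^*(-du)$ combined with equality in H\"older's inequality still forces radial symmetry through the polar coordinate system. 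I would regard this last verification as the most delicate --- though still standard --- point of the argument.
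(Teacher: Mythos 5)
Your proposal is correct and follows essentially the same route the paper intends: Theorem \ref{th-hardy} is stated there with the remark that it is proved ``similarly as in the proof of Theorem \ref{Hardyineq}'', and your argument is precisely that adaptation, with the pointwise bound $|\langle du,\nabla\rho_{x_0}\rangle|\le\|du\|_{x_0,F}$ on $M_\pm\cup M_0$ playing the role it already plays in Theorem \ref{theorem-without}, and the same test functions $u=\psi[\max\{\epsilon,\rho_{x_0}\}]^{-\gamma}$ (for which $\|du\|_{x_0,F}=F^*(-du)=\gamma\rho_{x_0}^{-\gamma-1}$ by the eikonal equation) giving sharpness, and the same divergence argument via (\ref{5.5new++}) giving non-attainment.
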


We conclude this section by formulating the following natural question.

\medskip

\noindent \textbf{Problem.} \textit{Under the same assumptions as in Theorem} \ref{th-hardy},  \textit{prove that for every $x_0\in M$},
\[
\inf_{u\in C^\infty_0(M)\backslash\{0\}}\frac{\ds\int_M  {F^{*2}(d u)} d\mathfrak{m}}{\ds\int_M \frac{u^2}{{\rho^{2}_{x_0}}}d\mathfrak{m}}=\frac{(n-2)^2}{4\lambda_F^2(M)}.\tag{5.9}\label{minek}
\]
Clearly, (\ref{minek}) trivially holds whenever $F$ is reversible,  see Theorem \ref{Hardyineq}. Moreover, in Farkas, Krist\'aly and Varga \cite{FKV} there is a non-reversible version of the Hardy inequality which also supports the above question. Finally, the Funk model $(M,F)=(B^n,F)$ -- mentioned in the Introduction and postponed to the Appendix -- also supports the above problem; indeed, in this case the reversibility is $\lambda_F(B^n)=+\infty$ thus the right hand side of (\ref{minek}) formally reduces to $0,$ as we already claimed in  (\ref{Funk-nulla}).

\section{Appendix}\label{Sec6}

\subsection{Examples from Introduction}\label{example} Although Theorems  \ref{fistCan}-\ref{forCan} provide a quite full picture on the validity of uncertainty principles and the existence of extremals on Finsler manifolds, in the sequel we present two examples  which provided the starting point of our study and show the optimality of our results. The first example emphasizes the role of the reversibility in uncertainty principles; the second example shows that in too general Finsler manifolds -- even with constant negative flag curvature (see Statement 1) --  the uncertainty principles may fail. Both examples are of Randers-type arising from the Zermelo navigation problem, see Bao, Robles and Shen \cite{BRS}.

\begin{example}  \label{simexa}\rm (cf. (\ref{elso-minko-norma}))
For a fixed $t\in [0,1)$, consider the space $(M,F_t)=(\mathbb{R}^2, F_t)$, where $$F_t(x,y):=\alpha+\beta=|y|+t y^2,\ \ y=(y^1,y^2)\in \mathbb R^2.$$
	Since there is no space-dependence in $F_t$, it turns out that $(\mathbb{R}^2, F_t)$ is Minkowskian with $\mathbf{K}=0$,  $\mathbf{S}_{BH}=0=\mathbf{S}_{HT}$ and $\mathfrak{i}(M)=+\infty.
$ In particular, $(\mathbb{R}^2, F_t)$ is a Berwaldian Randers-type Cartan-Haradamard manifold and its reversibility is $$\lambda_{F_t}(\mathbb R^2)=\frac{1+t}{1-t},$$ see  Farkas, Krist\'aly and Varga \cite[p. 1229]{FKV}.
	
	Let $(x^1,x^2)$ be the standard coordinate system of $\mathbb{R}^2$. Since $F_t$ is a Randers metric, we have
	\[
	d\mathfrak{m}_{BH}=(1-t^2)^{\frac{3}2}dx^1\wedge dx^2,\ d\mathfrak{m}_{HT}=dx^1\wedge dx^2.
	\]
	If $\textbf{0}=(0,0)$, since $F_t$ is a Minkowski metric (thus, it is translation-invariant),  a direct calculation yields
	\[
	\mathscr{L}_{\mathfrak{m}}\equiv \mathscr{L}_{\mathfrak{m}}(\textbf{0})=\frac12\ds\int_{S_{\textbf{0}}\mathbb R^2}e^{-\tau(y)}d\nu_{\textbf{0}}(y)=\left\{
	\begin{array}{lll}
	& \pi, & \ \ \ \text{for } \mathfrak{m}=\mathfrak{m}_{BH}, \\
	\\
	&\frac{\pi}{(1-t^2)^\frac32}, & \ \ \ \text{for } \mathfrak{m}=\mathfrak{m}_{HT}.
	\end{array}
	\right.
	\]
	On the other hand,  a geodesic in $(\mathbb{R}^2, F_t)$ is a straight line; therefore, one  gets
	\[
	\rho_{\textbf{0}}(x)=d_{F_t}(\textbf{0},x)=|x|+t x^2,\ \forall x=(x^1,x^2).
	\]
	According to Shen \cite[Example 3.2.1]{Sh1},
	we have
	\begin{align*}
	F_t^*(-d \rho_{\textbf{0}}(x))&=\frac{\sqrt{(1-t^2)|-d\rho_{\textbf{0}}|^2+t^2(-\partial_2 \rho_{\textbf{0}})^2}+t\partial_2 \rho_{\textbf{0}}}{1-t^2}\\
	&=\frac{1+t^2+2t\frac{x^2}{|x|}}{1-t^2}.\label{utolso-korul}\tag{6.1}
	\end{align*}
	
	It is easy to check that  $({ \mathbf{ J}}^{\rm max}_{2,0,\textbf{0}})$ holds. Now assume that  $n^2/4=1$ is achieved in $({ \mathbf{ J}}^{\rm max}_{2,0,\textbf{0}})$
	 by an extremal function $u$. Due to Proposition \ref{strongfirsthe-prop},  the extremal has the form $u:= e^{-C\rho_\textbf{0}^2}$ for $C>0$; for simplicity, set $C=1$. Note that
	\begin{align*}
	J^{\rm max}_{2,0}(\textbf{0},u)\geq \frac{\left(\ds\int_{\mathbb{R}^2} F_t^{*2}(du)d\mathfrak{m}\right)\left( \ds\int_{\mathbb{R}^2}  \rho^2_0 u^2 d\mathfrak{m} \right)}{\left(\ds\int_{\mathbb{R}^2}u^2d\mathfrak{m}\right)^2}=J_{2,0}(\textbf{0},u).
	\end{align*}
	An easy computation furnishes
	\begin{align*}
	&\ds\int_{\mathbb{R}^2} u^2 d\mathfrak{m}=2\mathscr{L}_{\mathfrak{m}}(\textbf{0})\int^\infty_0r e^{-2r^2}dr=\frac{\mathscr{L}_{\mathfrak{m}}(\textbf{0})}{2}, \\
	&\ds\int_{\mathbb{R}^2} \rho^2_{\textbf{0}} u^2 d\mathfrak{m}=2\mathscr{L}_{\mathfrak{m}}(\textbf{0}) \int^\infty_0r^3 e^{-2r^2}dr=\frac{\mathscr{L}_{\mathfrak{m}}(\textbf{0})}{4}.
	\end{align*}
	Similarly, by (\ref{utolso-korul}) we have that
	\begin{align*}
	&\ds\int_{\mathbb{R}^2} F_t^{*2}(du)d\mathfrak{m}=\mathscr{L}_{\mathfrak{m}}(\textbf{0})\frac{4-3\sqrt{1-t^2}}{\sqrt{1-t^2}}.
	\end{align*}
	Hence,
	\[
	J^{\rm max}_{2,0}(\textbf{0},u)\geq J_{2,0}(\textbf{0},u)=\frac{4-3\sqrt{1-t^2}}{\sqrt{1-t^2}} \geq 1,
		\]
	with equality if and only if $t=0$. Thus, $1=n^2/4$ is sharp in $({ \mathbf{ J}}^{\rm max}_{2,0,\textbf{0}})$    if and only if $t=0$, i.e., $F_t=F_0$ is reversible, in which case $d\mathfrak{m}_{BH}=d\mathfrak{m}_{HT}$ is precisely the Lebesgue measure on $\mathbb R^2$; this fact is in a perfect concordance with the statement of Theorem \ref{fistCan}/(ii). A similar argument shows (with the same candidate $u= e^{-\rho_\textbf{0}^2}$ for the extremal, cf. Proposition \ref{reverLemmaflag-prop}) that
	$$J_{2,0}(\textbf{0},u)=\frac{4-3\sqrt{1-t^2}}{\sqrt{1-t^2}} \geq \frac{1}{\lambda^2_{F_t}(\mathbb R^2)} =\frac{(1-t)^2}{(1+t)^2},$$
	with equality if and only if $t=0$, which confirms the statement of Theorem \ref{thirdCan}.
	
	One can also show by a direct computation that 	$
		J^{\rm min}_{2,0}(\textbf{0},u)\geq 1$ for every $u\in C_0^\infty(\mathbb R^2)\backslash\{0\}$  if and only if $ t=0$ i.e.,   $F_t=F_0$  is reversible; this fact supports Theorem \ref{forCan}.

\end{example}

\begin{example}\rm \label{example-2}  (cf. (\ref{Funk-nulla}))
Let $M:=\mathbb{B}^n=\{x\in \mathbb R^n:|x|<1\}$ be the $n$-dimensional Euclidean unit ball, $n\geq 3$, and consider the \textit{Funk metric} $F:\mathbb{B}^n\times
\mathbb R^{n}\to \mathbb R$  defined by
$$
F(x,y)=\frac{\sqrt{|y|^2-(|x|^2|y|^2-\langle
		x,y\rangle^2)}}{1-|x|^2}+\frac{\langle x,y\rangle}{1-|x|^2},\ x\in
\mathbb{B}^n,\ y\in T_x\mathbb{B}^n=\mathbb R^n.
$$
Hereafter, $|\cdot|$ and
$\langle\cdot, \cdot\rangle$ denote the $n$-dimensional Euclidean
norm and inner product. The pair $(\mathbb{B}^n,F)$ is a non-reversible Randers-type Finsler manifold,  see  Shen \cite{Sh1}, and its reversibility is $\lambda_F(\mathbb{B}^n)=+\infty,$ see Krist\'aly and Rudas \cite{KRudas}.  The dual Finsler metric of $F$ is
$$
F^*(x,y)=|y|-\langle
x,y\rangle
,\ \ \ (x,y)\in \mathbb{B}^n\times
\mathbb R^{n}.
$$
The distance function associated
to $F$ is
$$d_{F}(x_1,x_2)=\ln\frac{\sqrt{|x_1-x_2|^2-(|x_1|^2|x_2|^2-\langle x_1,x_2\rangle^2)}-\langle x_1,x_2-x_1\rangle}{\sqrt{|x_1-x_2|^2-(|x_1|^2|x_2|^2-\langle x_1,x_2\rangle^2)}-\langle x_2,x_2-x_1\rangle},\ x_1,x_2\in \mathbb{B}^n,$$
see Shen \cite[p.141 and p.4]{Sh1}; in particular,
\[
\rho_{\textbf{0}}(x)=d_{F}(\textbf{0},x)=-\ln(1-|x|) \,\ {\rm and}\ \ \varrho_{\textbf{0}}(x)=d_{F}(x,\textbf{0})=\ln(1+|x|),\ \ x\in \mathbb{B}^n,\label{utolso-korul-2}\tag{6.2}
 \]
 where $\textbf{0}=(0,...,0)\in \mathbb R^n$.
The Busemann-Hausdorff measure on $(\mathbb{B}^n,F)$ is $d\mathfrak{m}_{BH}(x)=dx,$ see Shen \cite[Example 2.2.4]{Sh1}. The Finsler manifold $(\mathbb{B}^n,F)$ is forward (but not backward) complete, it has constant negative flag curvature ${\bf K}= -\frac{1}{4}$, see Shen \cite[Example 9.2.1]{Sh1} and its $S$-curvature is ${\bf S}(x,y)=\frac{n+1}{2}F(x,y)$, $(x,y)\in T\mathbb R^n,$  see Shen \cite[Example 7.3.3]{Sh1}.

In the sequel we show that the Hardy inequality fails on $(\mathbb{B}^n,F)$; to do this, we recall by (\ref{1.3}) that
$$
J_{2,2}(\textbf{0},u)=\frac{\ds\int_{\mathbb{B}^n} F^{*2}(du)d\mathfrak{m}_{BH}}{\ds\int_{\mathbb{B}^n} \frac{u^2}{\rho^2_{\textbf{0}}}d\mathfrak{m}_{BH}},\ \ u\in C_0^\infty(\mathbb{B}^n)\setminus \{0\}.
$$
For every $\alpha>0$, let
$$
u_\alpha(x):=-e^{-\alpha \rho_{\textbf{0}}(x)}=-(1-|x|)^\alpha,\ \ x\in \mathbb{B}^n.
$$
Clearly, $u_\alpha$ can be approximated by functions belonging to $C_0^\infty(\mathbb{B}^n)$; moreover, $u_\alpha\in H_{0,F}^{1}(\mathbb{B}^n)$ for every $\alpha>0$, where  $H_{0,F}^{1}(\mathbb{B}^n)$ is the closure of $C_0^\infty(\mathbb{B}^n)$
with respect to the (positively homogeneous) norm
$$
\|u\|_{F}=\left(\displaystyle \int_{\mathbb{B}^n} F^{*2}(du)d\mathfrak{m}_{BH}+\displaystyle \int_{\mathbb{B}^n} u^2d\mathfrak{m}_{BH}\right)^{1/2}.
$$
Indeed, we have that  $F^*(du_\alpha(x))=\alpha(1-|x|)^\alpha,$ thus
$$\ds\int_{\mathbb{B}^n} F^{*2}(du_\alpha(x))d\mathfrak{m}_{BH}(x)=\alpha^2\ds\int_{\mathbb{B}^n} (1-|x|)^{2\alpha}dx=\alpha^2n\omega_n {\sf B}(2\alpha+1,n),$$
where $\omega_n$  and ${\sf B}$ denote the volume of the $n$-dimensional Euclidean unit ball and the Beta function, respectively. In a similar way, one has
$$\ds\int_{\mathbb{B}^n} u^2_\alpha(x)d\mathfrak{m}_{BH}(x)=n\omega_n {\sf B}(2\alpha+1,n).$$
Since $\ln^2(s)\leq {s^{-2}}$ for every $s\in (0,1]$, by (\ref{utolso-korul-2}) it turns out that
$$\ds\int_{\mathbb{B}^n} \frac{u_\alpha^2(x)}{\rho^2_{\textbf{0}}(x)}d\mathfrak{m}_{BH}(x)\geq \ds\int_{\mathbb{B}^n} (1-|x|)^{2\alpha+2}dx=n\omega_n {\sf B}(2\alpha+3,n).$$
Consequently,
$$\inf_{u\in C_0^\infty(\mathbb{B}^n)\setminus \{0\}}J_{2,2}(\textbf{0},u)\leq \inf_{\alpha>0}\frac{\ds\int_{\mathbb{B}^n} F^{*2}(du_\alpha)d\mathfrak{m}_{BH}}{\ds\int_{\mathbb{B}^n} \frac{u_\alpha^2}{\rho^2_{\textbf{0}}}d\mathfrak{m}_{BH}}\leq \inf_{\alpha>0}\alpha^2\frac{{\sf B}(2\alpha+1,n)}{{\sf B}(2\alpha+3,n)}=0,$$
which concludes the proof of (\ref{Funk-nulla}).
\end{example}

\subsection{Finsler manifolds with $\mathbf{K}=\mathbf{S}_{BH}=0$} In this subsection we discuss  more detailed the arguments from Remark \ref{elso-remark}/(ii).
We have seen throughout the paper that  Finsler manifolds verifying
\[
\mathbf{K}=0,\ \mathbf{S}_{BH}=0\tag{6.3}\label{3.10101010}
\]
play an important role in the study of uncertainty principles.
In the Riemmanian setting it is well-known  that such a manifold is locally isometric to the Euclidean space and particularly, it is globally isometric to the Euclidean space whenever it is complete and simply connected.

At this point, a natural question arises in the Finslerian setting: does a Finsler manifold verifying (\ref{3.10101010}) is locally isometric to a Minkowski space?

According to Berwald \cite{BW2} or Shen \cite[Proposition 8.2.4]{Shen2013}, a Finsler manifold is locally Minkowskian if and only if it is a flat Berwald manifold. Thus, a natural approach to answer the above question is to study if a  manifold satisfying (\ref{3.10101010}) is Berwaldian. It turns out that in general the answer is \textit{negative}. Indeed, Shen \cite{Shen2003} constructed the following example: if $n\geq 3$ and $\Omega=\{x=(x^1,x^2,\overline x)\in \mathbb R^2\times \mathbb R^{n-2}:(x^1)^2+(x^2)^2<1\}$ is a cylinder in $\mathbb R^n$ then the metric $\tilde F:T\Omega\to \mathbb R$ given by
\begin{equation}\label{fish-tank}
\tilde F(x,y)=\frac{\sqrt{(-x^2y^1+x^1y^2)^2+|y|^2(1-(x^1)^2-(x^2)^2)}-(-x^2y^1+x^1y^2)}{1-(x^1)^2-(x^2)^2},\ y=(y^1,y^2,\overline y)\in T_x\Omega,
\end{equation}
is a Finsler metric verifying (\ref{3.10101010}), but it is \textit{not} Berwaldian (thus, not Minkowskian).

In the sequel, we provide a method by means of which we can  construct a whole class of non-Berwald manifolds verifying (\ref{3.10101010}); such an argument is based on the navigation problem on manifolds.
To do this, let $V$ be a vector field on the Finsler manifold $(M,F)$ and suppose that
$F(V)<1$.  At each point $x\in M$,  by shifting the indicatrix $S_x M:= \{y\in T_xM: F(x,y)=1 \}$ along the vector
$-V_x$, we obtain a new indicatrix which corresponds to a new Minkowski norm $\tilde{F}_x$.  Equivalently, the norm
$\tilde{F}_x(y) = \tilde{F}(x,y)$ is the unique solution to the following nonlinear equation
\begin{equation*}\label{eq:navigation}
	F\left(x, \frac{y}{\tilde{F}(x,y)}+V_x\right) = 1.
\end{equation*}
In this way a new Finsler metric $\tilde{F}$ is obtained on $M$ which is produced by the {\it navigation data} $(F, V)$.
\begin{remark}\rm
Note that the navigation problem adopted here slightly
differs from those in Shen \cite{Shen2003} and Bao,  Robles and Shen \cite{BRS}, where $(F,-V)$  has been used instead of the navigation data $(F, V )$.
\end{remark}

 The following result relates $F$ and $\tilde F$ whenever the vector field $V$ is a Killing field of the metric $F$.

\begin{theorem}\label{Huang-thm} Assume that $V$ is a Killing field of the Finsler manifold $(M,F)$ with $F(V)<1$, and let $\tilde{F}$ be the Finsler metric produced by the navigation
	data $(F,V)$. Then we have the following:
	
\begin{itemize}
	\item[{\rm (a)}] The flag curvatures and $S$-curvatures   of $(M,F)$ and $(M, \tilde{F})$ are related by
	\[
	\widetilde{\mathbf{K}}(y, \cdot) = \mathbf{K}(\tilde{y}, \cdot) \ \ {and}\ \ {\widetilde{\mathbf{S}}_{BH}}(y) = {\mathbf{S}_{BH}}(\tilde{y}),
	\]
	where $\tilde{y} = y - F(x,y)V;$
	
	\item[{\rm (b)}] If $\psi_t$ is a one-parameter isometry group of the Finsler manifold $(M,F)$ which generates the Killing field $V$, then for each $F$-geodesic $\gamma:(a,b)\to M$, the
	curve $t\mapsto\psi_t\gamma(t)$ is a $\tilde{F}$-geodesic.
\end{itemize}	
\end{theorem}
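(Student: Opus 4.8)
The plan is to exploit the classical correspondence between a Finsler metric $F$, a Killing field $V$ with $F(V)<1$, and the navigated metric $\tilde F$, in the spirit of Bao--Robles--Shen and Huang's work on Randers and general navigation. The central idea is that when $V$ is Killing, its flow $\psi_t$ is a one-parameter isometry group of $(M,F)$, and the navigation construction can be realized dynamically: a point moving with $\tilde F$-unit speed is, after being transported by $\psi_{-t}$ (or $\psi_t$, depending on sign conventions), precisely a point moving with $F$-unit speed. This is the content of part (b), and part (a) follows from it because flag and $S$-curvatures are determined by the behavior of geodesics and the volume form along geodesics, both of which are preserved under isometries.

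First I would establish part (b). Fix an $F$-geodesic $\gamma:(a,b)\to M$ and set $\eta(t):=\psi_t\gamma(t)$. I would compute $\dot\eta(t)=(\psi_t)_*\dot\gamma(t)+V_{\eta(t)}$, using that $\frac{d}{dt}\psi_t(x)=V_{\psi_t(x)}$ and the chain rule. Since $\psi_t$ is an $F$-isometry, $F((\psi_t)_*\dot\gamma(t))=F(\dot\gamma(t))$ is constant along $\gamma$; normalizing $F(\dot\gamma)\equiv 1$, the relation $F\bigl(\eta, (\psi_t)_*\dot\gamma\bigr)=1$ together with $\dot\eta=(\psi_t)_*\dot\gamma+V_\eta$ shows, via the defining navigation equation $F\bigl(x,\frac{y}{\tilde F(x,y)}+V_x\bigr)=1$, that $\tilde F(\eta,\dot\eta)=1$, i.e. $\eta$ has $\tilde F$-unit speed. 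To upgrade ``unit speed curve'' to ``geodesic'' I would invoke the variational/first-variation characterization: $\tilde F$-geodesics are the unit-speed critical points of $\tilde F$-length, and one transfers the critical-point property from $\gamma$ to $\eta$ using that $\psi_t$ maps $F$-geodesics to $F$-geodesics (being isometries) and that the navigation map is a fiberwise diffeomorphism of the tangent bundle intertwining the two variational problems; alternatively, and more cleanly, one verifies directly that $\eta$ satisfies the geodesic spray equation of $\tilde F$, using the known formula for the geodesic coefficients $\tilde G^i$ of a navigated metric in terms of $G^i$ and the Killing condition $\nabla V$ being pointwise skew (so that the deformation terms coming from $V$ cancel along the lifted curve).

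Next, part (a). With the geodesic correspondence in hand, I would argue that the flag curvature is a second-order invariant of the geodesic spray, and since $t\mapsto\psi_t\gamma(t)$ is a $\tilde F$-geodesic whose velocity at parameter $t$ is $\dot\eta=(\psi_t)_*\dot\gamma+V$, and since $\psi_t$ is an $F$-isometry preserving $\mathbf K$, one gets $\widetilde{\mathbf K}(y,\cdot)=\mathbf K(\tilde y,\cdot)$ with $\tilde y=y-F(x,y)V$ by unwinding the identification $y\leftrightarrow (\psi_t)_*\dot\gamma(t)=\dot\eta-V=y-F(x,y)V$ at $t=0$ (after setting $x=\gamma(0)$, $\psi_0=\mathrm{id}$). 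For the $S$-curvature of the Busemann--Hausdorff measure, I would use the key fact that $\psi_t$, being an $F$-isometry, preserves $d\mathfrak m_{BH}$; moreover, the navigation by a Killing field preserves the Busemann--Hausdorff volume form as well (this is a standard computation: shifting every indicatrix by the same $-V_x$ rigidly translates $B_xM$, so $\mathrm{vol}(B_xM)$ is unchanged, hence $\sigma_{BH}$ is unchanged). Since $\mathbf S_{BH}$ is the derivative of the distortion $\tau_{BH}$ along geodesics and both the measure and the geodesics correspond under $\psi_t$, the identity ${\widetilde{\mathbf S}_{BH}}(y)={\mathbf S_{BH}}(\tilde y)$ follows by differentiating $\tau$ along the two corresponding geodesics at $t=0$.

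The main obstacle I anticipate is making the geodesic correspondence in (b) fully rigorous without simply citing Bao--Robles--Shen: one must carefully track the sign convention (the Remark warns that the present navigation data $(F,V)$ differs from $(F,-V)$ used elsewhere), verify that $\dot\eta$ stays in the domain where $\tilde F$ is smooth, and either produce the explicit relation $\tilde G^i = G^i + (\text{terms in } V)$ and show the $V$-terms vanish on the lifted curve using $\mathcal L_V F=0$, or else carefully justify the transfer of the first-variation condition. The curvature identities in (a) are then essentially formal consequences, the only subtlety being to confirm that the Busemann--Hausdorff density is genuinely navigation-invariant for a Killing $V$ (as opposed to the Holmes--Thompson density, which in general is not), so that $\mathbf S_{BH}$—and not merely the geodesic spray—transfers correctly.
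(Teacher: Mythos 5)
Your plan for part (b) and for the flag-curvature identity is consistent with the paper, which in fact does not prove these statements at all but cites Huang--Mo and Foulon--Matveev; the only place where the paper does genuine work is the $S$-curvature identity, and that is precisely where your proposal has a gap. You argue: the Busemann--Hausdorff densities of $F$ and $\tilde F$ coincide (correct, since the indicatrices differ by a translation), the flow $\psi_t$ preserves $d\mathfrak{m}_{BH}$ (correct, since it is an isometry group), geodesics correspond by (b), hence ``the identity follows by differentiating $\tau$ along the two corresponding geodesics at $t=0$.'' But the distortion is $\tau(y)=\log\bigl(\sqrt{\det g_{ij}(x,y)}/\sigma(x)\bigr)$, and only the $\sigma$-part is controlled by your three ingredients: the term $\sqrt{\det \tilde g_{ij}(x,y)}$ belongs to a different fundamental tensor and is not equal to $\sqrt{\det g_{ij}(x,\tilde y)}$ at corresponding vectors (already for $F$ Euclidean and $V$ a constant vector one has $\det g\equiv 1$, while $\det\tilde g$ is non-constant on the translated indicatrix because $\tilde F$ is a non-Riemannian Randers norm). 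So $\tilde\tau$ and $\tau$ are genuinely different functions, and the equality of their derivatives along the corresponding geodesic flows --- which is exactly the statement $\widetilde{\mathbf{S}}_{BH}(y)=\mathbf{S}_{BH}(\tilde y)$ --- does not follow from measure preservation plus the geodesic correspondence; the cancellation of the determinant terms is the actual content of the claim, so as stated your step is circular.

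The paper closes this gap by working on the cosphere bundle: it invokes the Reeb-field relation $\xi=\tilde\xi+X_V$ (Huang--Mo), notes $\sigma_F=\sigma_{\tilde F}$, uses that the Killing flow preserves the Busemann--Hausdorff density so that $X_V(\sigma_F)=0$, and expresses the $S$-curvature through the Reeb field acting on that common density, so the determinant of the fundamental tensor never enters the comparison. To repair your plan you would either have to do the same (prove or cite the spray/Reeb-field relation for Killing navigation and use an $S$-curvature formula in which $\det g$ does not appear, e.g. $\mathbf{S}=\partial G^m/\partial y^m - y^m\partial_{x^m}\log\sigma$, then actually compute the vertical divergence of the correction terms coming from $V$), or else prove directly that $\frac{d}{dt}\big|_{t=0}\log\det\tilde g(\dot\eta(t))=\frac{d}{dt}\big|_{t=0}\log\det g(\dot\gamma(t))$ along corresponding geodesics, which is not automatic. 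Your caution about the sign convention for the navigation data and about the Holmes--Thompson density not being navigation-invariant is well placed, but it does not address this central issue.
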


\begin{proof}
	Property (b) and the first part of (a) are well-known by  Huang and Mo  \cite{MH2007, HM2011} and Foulon and Matveev \cite{FM}. In the sequel, we sketch the proof of the remaining part of (a) concerning the $S$-curvatures.
	Note that at every point $x\in M$ the indicatrices of $F$ and $\tilde{F}$ only differ by a translation $V_x$.
	Consequently, the Busemann-Hausdorff measures of these two metrics coincide, i.e., $\sigma_F(x) = \sigma_{\tilde{F}}(x)$. Now let $\xi$ and $\tilde{\xi}$ be the Reeb fields of these two metrics; they are vector fields on the co-sphere bundles
	which are the Legendre transformations of the sprays of $F$ and $\tilde{F}$, respectively.  It is proved in Huang and Mo \cite{MH2007, HM2011} that $\xi = \tilde{\xi} + X_V$,
	where $X_V$ is the complete lift of the vector field $V$ to the cotangent bundle.  When $V$ is a Killing field, it is easy to see that
	the one-parameter isometry group generated by $V$ will preserve the Busemann-Hausdorff measure, thus $X_V(\sigma_F) = 0$. Since  $\textbf{S} = \xi(\sigma_F)$,  we have
	$\tilde{\textbf{S}} =\tilde{\xi}(\sigma_{\tilde{F}}) = (\xi - X_V)(\sigma_F) = \xi (\sigma_F)=\textbf{S}$.  
\end{proof}

The above result implies that if $(M,F)$ is forward complete and the Killing field $V$ is also complete, then $(M,\tilde{F})$ is forward complete as well; moreover,  if $F$ satisfies (\ref{3.10101010}) then $\tilde{F}$ also satisfies (\ref{3.10101010}).  However, even if $F$ is a Berwald metric,  $\tilde{F}$ is not necessarily of Berwald type in general, as long as $V$ is \textit{not} a parallel vector field; this is the idea behind our construction. We conclude the paper with two examples falling into the latter class of metrics.

\begin{example}[Shen's fish tank]\rm
Let $F(x,y)=|y|$ be the standard Euclidean metric on $\mathbb{R}^n$ and $Q\in\mathbb{R}^{n\times n}$ be a skew-symmetric matrix. Then $V=V_x = Qx$ is a Killing field and the corresponding one-parameter isometry group is given by
$
	\psi_t(x) = \mathrm{e}^{tQ}x,\,t\in\mathbb{R}, \,x\in\mathbb{R}^n.
$
Now let $M$ be the region bounded by $F(-V)=|V|<1$.  Then the metric $\tilde{F}$ produced by the navigation
data $(F, V)$ on $M$ is of Randers type given by
\begin{equation}\label{tank-2}
\tilde{F}(x,y) = \frac{\sqrt{(1-|V|^2)|y|^2+\langle V,y\rangle^2}}{1-|V|^2}+\frac{\langle V,y\rangle}{1-|V|^2}.
\end{equation}
In particular, if $V(x)=(x^2,-x^1,\textbf{0})\in \mathbb{R}^n$, $n\geq 3$, then $M=\Omega$ is the interior of a cylinder  $(x^1)^2+(x^2)^2<1$ in $\mathbb R^n$ and $\tilde F$ is precisely the metric (\ref{fish-tank}) of Shen \cite{Shen2003};  this example is also referred as the Shen's fish tank. Note that $(M,\tilde F)$ it is not forward complete; indeed, geodesics of the form $t\mapsto \mathrm{e}^{tQ}(x+ty)$, when $y=x$,
  will eventually move out of $M$. We also note that (\ref{tank-2}) is precisely the Funk metric from Example \ref{example-2} whenever $V(x) = x$; with this choice,  $V(x)=x$ is a homothetic vector field but not a Killing one.


\end{example}

\begin{example}[Rigid motions of the plane]\rm
The rigid motions of the Euclidean plane can be written in matrix form and they constitute a Lie group
\[
	E(2) = \left\{\begin{bmatrix} A & b\\ 0 & 1\end{bmatrix}\; : \; A\in O(2), b\in\mathbb{R}^{2\times 1}\right\}.
\]
Its Lie algebra (the set of left invariant vector fields) has a basis
\[
	e_1 = \begin{bmatrix} 0 & 0 & 1 \\ 0 & 0 & 0 \\ 0 & 0 & 0 \end{bmatrix},\quad
	e_2 = \begin{bmatrix} 0 & 0 & 0 \\ 0 & 0 & 1 \\ 0 & 0 & 0 \end{bmatrix},\quad
	e_3 = \begin{bmatrix} 0 & -1 & 0 \\ 1 & 0 & 0 \\ 0 & 0 & 0 \end{bmatrix}.
\]
Let $\alpha$ be the Riemannian metric on $E(2)$ such that $\{e_1,e_2,e_3\}$ is an orthonormal basis at each point.
It is easy to check that $\alpha$ has vanishing sectional curvature and $e_3$ is a parallel vector field for $\alpha$.
Let $\beta$ be the dual $1$-form of $e_3$, then $$F=\alpha\phi(\beta/\alpha)$$ is a Berwald metric with vanishing flag
curvature for a suitably chosen function $\phi$. A typical example of this kind is the mountain slope metric of Matsumoto \cite{Matsumoto} describing the law of walking with a constant speed $v$ under the effect of gravity on a slope having the angle $\alpha\in [0,\pi/2)$ with respect to the horizontal plane; in this case,  $$\phi(s)=\left(v+\frac{g}{2}\sin(\alpha)s\right)^{-1}, \ s\geq 0,$$ where
$g\approx9.81$, assuming the structural condition  $g\sin\alpha < v$ is  fulfilled.

Now let $\hat{V}$ be the right-invariant vector field corresponding to $e_3$ and let $V:=\epsilon\hat{V}$, $\epsilon\in(0,1)$.
Then $V$ is a Killing field for $F$ and the inequality $F(V)<1$ holds in a neighborhood of the identity element. The Finsler metric $\tilde{F}$ produced by the navigation data $(F,V)$ on $M$ has vanishing flag curvature and vanishing $S$-curvature,
but it is not of Berwald type.  Note that $(M,\tilde{F})$ is also non-complete.
\end{example}

We conclude the paper with a remark concerning the non-completeness of the above metrics.

\begin{remark}\rm On one hand, according to Huang and Xue \cite{HX} and Shen \cite{BS},  if $(M,F)$ is a forward complete Finsler manifold with $\mathbf{K}\leq 0$,  then any bounded Killing field $V$ must be parallel.  
	The new metric $\tilde{F}$ is defined at points where $F(V)<1$, so it
is not defined on the whole manifold; this is the source of non-completeness. On the other hand, as far as we know, all the examples of either \textit{forward complete} or \textit{reversible} Finsler manifolds with (\ref{3.10101010})    are {always} \textit{Berwaldian} and hence, Minkowskian. It remains to fully characterize the Finsler manifolds with the aforementioned properties which will be considered elsewhere.
\end{remark}

\textbf{Acknowledgements}.
The authors thank the anonymous Referee for her/his valuable comments.

\end{document}